\newtheorem{claim}{Claim}
\newtheorem{remark}[theorem]{Remark}
\newtheorem{observation}[theorem]{Observation}
\newtheorem{notation}[theorem]{Notation}
\newcommand{\ignore}[1]{}
\newcommand{\remove}[1]{}
\newcommand{\janthirteen}[1]{#1}
\newcommand{\janseven}[1]{#1}
\newcommand{\janonerem}[1]{}
\newcommand{\janone}[1]{#1}
\newcommand{\deceight}[1]{#1}
\newcommand{\octthirteen}[1]{#1}
\newcommand{\octeight}[1]{#1}
\newcommand{\octone}[1]{#1}
\newcommand{\maythreeone}[1]{#1}
\newcommand{\maytwonine}[1]{#1}
\newcommand{\mayoneseven}[1]{#1}
\newcommand{\mayonefour}[1]{#1}
\newcommand{\mayoneone}[1]{#1}
\newcommand{\maysix}[1]{#1}
\newcommand{\mayseven}[1]{#1}
\newcommand{\augfifteen}[1]{#1}
\newcommand{\julythirteen}[1]{#1}
\newcommand{\juneonenine}[1]{#1}
\newcommand{\junesix}[1]{#1}
\newcommand{\sept}[1]{#1}
\newcommand{\augusttwenty}[1]{#1}
\newcommand{\augustsix}[1]{#1}
\newcommand{\mayfour}[1]{#1}
\newcommand{\aprilEight}[1]{#1}
\newcommand{\marchTwentysix}[1]{#1}
\newcommand{\marchTen}[1]{#1}
\newcommand{\febSixteenChange}[1]{#1}
\newcommand{\febEightChange}[1]{#1}
\newcommand{\febFiveChange}[1]{#1}
\newcommand{\greenchange}[1]{#1}
\newcommand{\redchange}[1]{#1}
\newcommand{\bluechange}[1]{#1}
\newcommand{\janThree}[1]{#1}
\newcommand{\decTwoOhTwoOh}[1]{#1}
\newcommand{\rbr}{\bluechange}
\newcommand{\janFourteen}[1]{#1}
\newcommand{\febTwelf}[1]{#1}
\title{Extending drawings of complete graphs into arrangements of pseudocircles}
\author{Alan Arroyo\footnote{\octthirteen{Supported by CONACyT.  This project has received funding from the European UnionÕs Horizon 2020 research and innovation programme under the Marie Sk\l{}odowska-Curie grant agreement No.\ 754411, IST, Klosterneuburg, Austria.}}, R.\ Bruce Richter\footnote{Supported by NSERC Grant 50503-10940-500,  Dept.\ of Combinatorics \& Optimization, University of Waterloo, Waterloo, Canada N2L 3G1.}, and Matthew Sunohara\footnote{Supported by NSERC, Dept.\ of Mathematics, University of Toronto, Toronto, Canada M5S 1A1.
 }\\ {\scriptsize\texttt{alanarroyoguevara@gmail.com, brichter@uwaterloo.ca, matthew.sunohara@mail.utoronto.ca}}}
\date{\LaTeX-ed: \today}
\newcommand{\startClaims}{\setcounter{claim}{0}}
\newenvironment{cproof}%
{\noindent{\bf Proof.}\startClaims\  }%
{\hfill\eop\par\bigskip}%
\newenvironment{proofof}[1]%
{\noindent{\bf Proof of #1.}}%
{\hfill\eopf\par\bigskip}%
\def\eop{\hfill{{\rule[0ex]{7pt}{7pt}}}}
\newenvironment{cproofof}[1]
{\bigskip\noindent{\bf Proof of #1.}\startClaims\ }
{\hfill{\eop}\par\bigskip}
\renewenvironment{proof}
{\noindent{\bf Proof.}}
{\hfill{$\Box$}\par\bigskip}
\newcommand{\eopf}{\raisebox{0.8ex}{\framebox{}}}
\def\meet#1#2{\times_{#1^-\kern-3pt,\hskip1pt #2^+}}
\def\sphere{{\mathbb S^2}}
\newcommand{\apriltwonine}[1]{#1}
\newcommand{\jantwoseven}[1]{#1}
\begin{document}

%
\maketitle              
%


{\bf Key words.} Drawings of complete graphs, pseudolinear drawings, pseudospherical drawings

{\bf AMS Classification.} {05C10, 52C10, 52C30}

\begin{abstract}
Motivated by the successful application of geometry to proving the Harary-Hill Conjecture for ``pseudolinear" drawings of $K_n$, we introduce ``pseudospherical" drawings of graphs.
\octthirteen{A {\em spherical drawing} 
 of a graph $G$ is a drawing in the unit sphere $\mathbb{S}^2$ in which the vertices of $G$ are represented as points---no three on a great circle---and the edges of $G$ are shortest-arcs in $\mathbb{S}^2$
 connecting pairs of vertices.  Such a drawing has three properties}:  \decTwoOhTwoOh{(1)} every edge $e$ is contained in a simple closed curve $\gamma_e$  such that the only vertices in $\gamma_e$ are the ends of $e$; \decTwoOhTwoOh{(2)} if $e\ne f$, {then $\gamma_e\cap\gamma_f$ has precisely two crossings}; and \decTwoOhTwoOh{(3)} if $e\ne f$, then $e$ intersects $\gamma_f$ at most once\deceight{, either \decTwoOhTwoOh{in} a crossing or an end of $e$}.  We use \decTwoOhTwoOh{Properties (1)--(3)}  to define a {\em pseudospherical drawing\/} of $G$
.
Our main result is that, for the complete graph, \decTwoOhTwoOh{Properties (1)--(3)}  are equivalent to the same three properties but with ``{\decTwoOhTwoOh{precisely} two \octthirteen{crossings}}" \decTwoOhTwoOh{in (2)} replaced by ``{at most two crossings}".   


\janthirteen{The proof requires a result in the geometric transversal theory of arrangements of pseudocircles. This is proved using the} surprising result that the absence of special arcs ({\em coherent spirals}) in an arrangement of simple closed curves characterizes the fact that any two curves in the arrangement have at most \octthirteen{two} crossings.

Our studies provide the necessary ideas for exhibiting a drawing of $K_{10}$ that has no extension to an arrangement of pseudocircles and a drawing of $K_9$ that does extend to an arrangement of pseudocircles, but no such extension has all pairs of pseudocircles crossing twice.
\end{abstract}

\section{Introduction}\label{sec:intro}

The Harary-Hill Conjecture states that the crossing number of the complete graph $K_n$ is given by the formula 

\[
H(n):=\frac14\left\lfloor \frac {\mathstrut{n}}{\mathstrut{2}}\right\rfloor \left\lfloor \frac {n-1}2\right\rfloor \left\lfloor \frac {n-2}2\right\rfloor  \left\lfloor \frac {n-3}2\right\rfloor \text{.}
\]

\noindent Some of the known families of drawings of $K_n$ achieving $H(n)$ crossings have the geometric character of being spherical: a {\em spherical drawing} 
 of a graph $G$ is a drawing in the unit sphere $\mathbb{S}^2$ in which the vertices of $G$ are represented as points\octthirteen{---no three on a great circle---}and the edges of $G$ are shortest-arcs in $\mathbb{S}^2$
 connecting pairs of vertices.
 
Examples of \jantwoseven{families having $H(n)$ crossings are:}  Hill's Tin Can Drawings \cite{hh}; \octthirteen{Kyn\v cl's general spherical drawing in his posting \cite{kyncl}};  the family of \'Abrego et al.\ \cite{abrego3} in which every edge is crossed at least once; \rbr{and  the crossing-minimal 2-page drawings  in \'Abrego et al.~\cite[Sec.~4.3, 4.4]{abrego2}, where they show that the 2-page crossing number of $K_n$ is $H(n)$.   \jantwoseven{The first three of these are known to be spherical.}}    

\deceight{A surprising result by Moon \cite{moon} states that the expected number of crossings in a random spherical drawing of $K_n$ is $\frac{3}{8}{n \choose 4}$.   Thus,}  spherical drawings are linked with the asymptotic version of the Harary-Hill Conjecture:
$$\lim_{n\rightarrow \infty }\frac{\text{cr}(K_n)}{{n \choose 4}}=\frac{3}{8}\text{.}$$ 

{\jantwoseven{No drawing of $K_n$} having $H(n)$ crossings is known to be non-spherical (at least up to Reidemeister-type moves; see below).}  (For $n$ even, the 2-page drawings in \cite{abrego2} are ``pseudospherical"; see the discussion in our Section \ref{sec:hcxPs}.) However, it is still unknown whether every spherical drawing of $K_n$ has at least $H(n)$ crossings.  

The analogue in the plane of spherical drawings is rectilinear drawings. A {\em rectilinear} drawing of a graph $G$ is a drawing of $G$ \octthirteen{in the plane} {so that its} edges are straight-line segments. One of the most important recent accomplishments in the study of crossing numbers is a result of \'Abrego and Fern\'andez-Merchant \cite{AF} and, simultaneously and independently, Lov\'asz et al.\  \cite{lovasz}, showing that rectilinear drawings of $K_n$ have at least $H(n)$ crossings.  \rbr{(It follows from \cite{agor}, especially Theorem 11 there, that, for $n\ge 10$, rectilinear drawings have strictly more than $H(n)$ crossings.)}  {There is a quite direct line from this early work,  \rbr{via shellability \cite{nineAuthor,aichEtAl}}, to \'Abrego et al \cite{abrego2} determining the 2-page crossing number of $K_n$.} 

The proofs that rectilinear drawings of $K_n$ have at least of $H(n)$ crossings \deceight{use} machinery for studying arrangements of pseudolines, and only require the property that the edges in a rectilinear drawing can be extended to such an arrangement. Drawings whose edges can be extended into an arrangement of pseudolines are called \emph{pseudolinear}. An analogous property is satisfied by spherical drawings: each edge can be extended to a great circle. This implies that the edges can be extended into an {\em arrangement of pseudocircles\/}, defined as a set of simple closed curves in $\mathbb{S}^2$ such that every two intersect at most twice, and every intersection is a crossing between two curves. 

The success of the geometric approach for the rectilinear crossing number of $K_n$ suggests trying an analogous approach for spherical drawings, replacing pseudolinear drawings with a suitable generalization of spherical drawings, which we call ``pseudospherical'' drawings and define below.

There are nice characterizations of pseudolinear drawings of $K_n$.  Aichholzer et al.\  \cite{oswin} prove that a drawing of $K_n$ in the plane is pseudolinear if and only if every crossing $K_4$ has the facial 4-cycle bounding the infinite face of the $K_4$.  Arroyo et al.\  \cite{amrs} have an equivalent characterization: a drawing of $K_n$ is pseudolinear if and only if it is f-convex.  Arroyo et al.\  \cite{abr} characterize when a drawing of a general graph in the plane is pseudolinear.

The notion of an f-convex drawing is introduced by Arroyo et al.~in \cite{convex} as part of the \emph{convexity hierarchy}: 
\[
\{\textrm{convex drawings}\} \supset \{\textrm{h-convex drawings}\} \supset \{\textrm{f-convex drawings}\}.
\]
A drawing $D$ of $K_n$ is {\em convex\/} if, for each 3-cycle $T$ of $K_n$, there is a closed side $\Delta$ of the drawing $D[T]$ of $T$ such that, if vertices $x,y$ are drawn in $\Delta$, then $D[xy]\subseteq \Delta$.  A principal theorem in \cite{amrs2} is a partial result suggesting that all crossing-minimal drawings of $K_n$ might be convex.

\rbr{A convex drawing $D$ of $K_n$ is {\em h-convex\/} if the convex side $\Delta_T$ of each 3-cycle $T$ may be chosen so that, if a 3-cycle $T'$ is, for another 3-cycle $T$, drawn in $\Delta_T$, then $\Delta_{T'}\subseteq \Delta_T$. In unpublished work, the method for the principal theorem in \cite{amrs2} mentioned above shows that a convex, but not h-convex, drawing of $K_n$ is not crossing-minimal. Putting these two ideas together, it is conceivable that every crossing-minimal drawing of $K_n$ is h-convex.}

It is quite easy to show that every spherical drawing is h-convex (see Section \ref{sec:hcxPs}). However, it is not true that every h-convex drawing is spherical. The property of h-convexity is preserved under (the natural analogue of) Reidemeister III moves (see Figure \ref{fg:reid}), but sphericity is not. Reidemeister III moves preserve three properties of spherical drawings which we take as the definition of a pseudospherical drawing.

\begin{figure}[ht!]
\begin{center}
\rbr{\includegraphics[scale=.5]{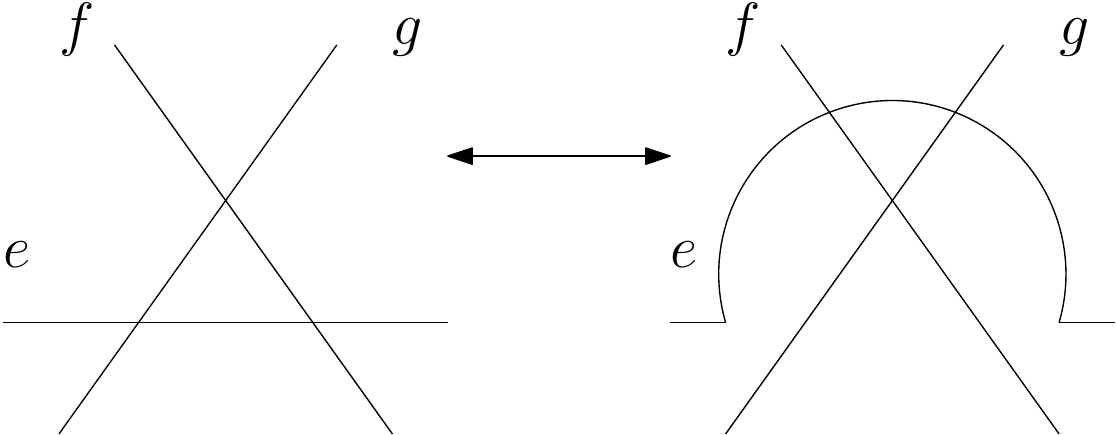}
\caption{Reidemeister III move.}}
\label{fg:reid}
\end{center}
\end{figure}

A drawing of a graph $G$ in the sphere is {\em pseudospherical\/} if\deceight{, for any distinct edges $e$ and $f$}: \begin{enumerate}[leftmargin=60pt,label={\bf (PS\arabic*)}, ref=(PS\arabic*),itemsep=0pt]
\item\label{it:psVertex} \deceight{$e$ is} contained in a simple closed curve $\gamma_e$ such that no vertex other than an end of $e$ is contained in $\gamma_e$;  
\item\label{it:psCrossings} \deceight{$|\gamma_e\cap \gamma_f|=2$} and all intersections are crossings; and
\item\label{it:psEdge} \deceight{$e\cap \gamma_f$} has at most one point.
\end{enumerate}

The least obvious part of the definition is perhaps \ref{it:psEdge}, which can be thought of as a combinatorial analogue of the property that an edge in a spherical drawing is not just a geodesic arc between its endpoints, but a shortest-arc between its endpoints.
 
Since the number of crossings of a drawing is also preserved under Reidemeister III moves, one obtains non-spherical drawings of $K_n$ with $H(n)$ crossings from the spherical examples given above. It is more natural to consider pseudospherical drawings than spherical drawings of $K_n$ in connection with crossing-minimality because pseudosphericity is preserved under Reidemeister III moves.

{The principal part} of our main theorem is to show that a drawing of $K_n$ is pseudospherical if and only if it is h-convex, a property which can be checked in polynomial time (see Section \ref{sec:hcxPs}). This can be seen as a parallel to the \jantwoseven{characterization} by Arroyo et al. \cite{amrs} of pseudolinear drawings of $K_n$ as f-convex drawings of $K_n$. The equivalence also enables the introduction of geometric methods to the study of h-convex drawings and bolsters the idea that all crossing-optimal drawings of $K_n$ are h-convex/pseudospherical: evidence that crossing-optimal drawings of $K_n$ are h-convex suggests that they are pseudospherical and vice versa.

In fact, we will do quite a bit more than \jantwoseven{characterize} pseudospherical drawings of $K_n$. The reader may have wondered about the choice of equality in \ref{it:psCrossings}. A natural variation of the notion of a pseudospherical drawing results from weakening \ref{it:psCrossings}. A drawing $D$ is {\em weakly pseudospherical\/} if it satisfies \ref{it:psVertex}, \ref{it:psEdge}, \deceight{while \ref{it:psCrossings} is relaxed to 
\begin{enumerate}[leftmargin=60pt,label={\bf (PS\arabic*w)}, ref=(PS\arabic*w),itemsep=0pt,start=2]
\item\label{it:psWeakCrossings} $|\gamma_e\cap \gamma_f|\le 2$ and all intersections are crossings.
\end{enumerate}
}
Our main result is the following.

\begin{theorem}\label{th:main}  
For a drawing $D$ of $K_n$, the following are equivalent:
\begin{enumerate}[leftmargin=60pt, label={\bf (\ref{th:main}.\arabic*)},ref=(\ref{th:main}.\arabic*)]
\item\label{it:Dps} $D$ is pseudospherical;
\item\label{it:Dwps} $D$ is weakly pseudospherical; and
\item\label{it:Dhc} $D$ is h-convex.
\end{enumerate}
\end{theorem}

The implication \ref{it:Dps}${}\Rightarrow{}$\ref{it:Dwps} is trivial.  Although the reader may not see it now, the implication \ref{it:Dwps}${}\Rightarrow{}$\ref{it:Dhc} is quite easy to prove.  The hard part is showing \ref{it:Dhc}${}\Rightarrow{}$\ref{it:Dps}.  We do not see how to demonstrate \ref{it:Dwps}${}\Rightarrow{}$\ref{it:Dps} directly.

The proof of \ref{it:Dhc}${}\Rightarrow{}$\ref{it:Dps} proceeds by iteratively finding a curve $\gamma_{e'}$ for the next edge $e'$ to extend by one {the current set $\Gamma$ of $\gamma_e$s} satisfying the conditions \ref{it:psVertex}--\ref{it:psEdge}.  There are two principal steps involved.  The first step is to find \janseven{two initial approximations} for $\gamma_{e'}$ (\janseven{when added to $\Gamma$, either of these} will \deceight{satisfy \ref{it:psWeakCrossings}}), while the second is to \janseven{repeatedly shift one of the initial approximations, gradually increasing the number of curves in \deceight{$\Gamma$} that it intersects until it intersects them all, at which point it is a $\Gamma$-transversal}.

Each of these steps has its challenges.  To find the initial \janseven{approximations}, we require an extensive study of h-convex drawings; this is done in Section \ref{sec:hcxPs}.   Crucially, each edge of an h-convex drawing partitions the vertex set of $K_n$ into two pseudolinear drawings of (typically smaller) complete subgraphs\janseven{; the initial approximations are near the outer boundary of each of these pseudolinear subdrawings}.  (Motivated by this study, we tried to show this partitioning holds for  pseudospherical drawings of general graphs.  In a personal communication, {Xinyu Lily} Wang has shown that it is false  in the more general context.)

\janseven{Producing the $\Gamma$-transversal requires shifting one of a pair of initial approximations from the preceding paragraph towards the other using analogues of Reidemeister II and III moves.}
The core of the shifting turns out to require a characterization of  an arrangement of pseudocircles, which we recall is a set of simple closed curves in the sphere such that every two intersect at most twice, and every intersection is a crossing between the two curves.  

The characterization of arrangement of pseudocircles requires an even more general notion.  An {\em arrangement of simple closed curves\/} in the sphere is a set of simple closed curves, any two of which have finitely many intersections, all of which are crossings. If $\Sigma$ is an arrangement of simple closed curves, then a {\em spiral\/} of $\Sigma$ is an arc \rbr{(that is, a homeomorph of a compact interval)} in the union $P(\Sigma)$ of the curves in $\Sigma$ that always makes the same---either all left or all right---turn in changing from one curve to another \janone{(see Figure \ref{fg:spiral})}.  \deceight{In Section \ref{sec:technical}, we give a more precise definition of spiral and define {\em coherent spirals\/}}. The auxiliary result that we need is the following.

\begin{theorem}\label{th:auxiliary}
Let $\Sigma$ \octthirteen{be an arrangement of simple closed curves}.  Then $\Sigma$ is an arrangement of pseudocircles if and only if $\Sigma$ has no coherent spirals.
\end{theorem}

Our \janone{study of spirals} led us to two drawings, one for each of $K_{10}$ and $K_9$.  The former has no extension of its edges to an arrangement of pseudocircles.  The latter has such an extension, but no extension has all pseudocircles crossing exactly twice.  These examples are exhibited in Section \ref{sec:K9andK10}.    

\deceight{Independently, Aichholzer at the \julythirteen{2015 Crossing Number Workshop in Rio de Janeiro} and  Pilz  at \janone{the} Geometric Graph Week in Berlin (2015) asked if every drawing of $K_n$ has an extension to an arrangement of pseudocircles.   The drawing of $K_{10}$ answers this question in the negative.  The drawing of $K_9$ answers negatively the related question of improving a pseudocircular extension to a pseudospherical extension.}

\octthirteen{\deceight{In the next section, we introduce spirals and prove} Theorem \ref{th:auxiliary}.  \janseven{Section~\ref{sec:transversal} contains the proof that an initial pair of approximations for $\Gamma$ implies the existence of a $\Gamma$-transversal}. 
Section \ref{sec:hcxPs} introduces h-convex drawings and proves the easy implication \ref{it:Dwps}${}\Rightarrow{}$\ref{it:Dhc}.  Section~\ref{sec:HcxIsPS} contains the necessary discussion of h-convex drawings to obtain the initial \janseven{approximations}, which is done in Section \ref{sec:hCxHasExactExtension}, thereby completing the proof of Theorem \ref{th:main}.    The interesting drawings of $K_9$ and $K_{10}$ are in Section \ref{sec:K9andK10}\deceight{, while Section \ref{sec:conclusion} has concluding remarks.}}

\ignore{
The Harary-Hill Conjecture states that the crossing number of the complete graph $K_n$ is given by the formula 

\[
H(n):=\frac14\left\lfloor \frac {\mathstrut{n}}{\mathstrut{2}}\right\rfloor \left\lfloor \frac {n-1}2\right\rfloor \left\lfloor \frac {n-2}2\right\rfloor  \left\lfloor \frac {n-3}2\right\rfloor \text{.}
\]

\noindent Some of the known families of drawings of $K_n$ achieving $H(n)$ crossings have the geometric character of being spherical: a {\em spherical drawing} 
 of a graph $G$ is a drawing in the unit sphere $\mathbb{S}^2$ in which the vertices of $G$ are represented as points\octthirteen{---no three on a great circle---}and the edges of $G$ are shortest-arcs in $\mathbb{S}^2$
 connecting pairs of vertices. 
 
\deceight{Examples of such families are:}  Hill's Tin Can Drawings \cite{hh}; \octthirteen{Kyn\v cl's general spherical drawing in his posting \cite{kyncl}};  the family of \'Abrego et al.\ \cite{abrego3} in which every edge is crossed at least once; \decTwoOhTwoOh{and the crossing-minimal 2-page drawings (all characterized) in \'Abrego et al.~\cite[Sec.~4.3, 4.4]{abrego2}, where they show that the 2-page crossing number of $K_n$ is $H(n)$.}    

\deceight{A surprising  result by Moon \cite{moon} states that the  number of crossings in a random spherical drawing of $K_n$ has, as $n$ goes to infinity,  $\frac{3}{8}{n \choose 4}$ crossings.   Thus,}  spherical drawings are linked with the asymptotic version of the conjecture:
$$\lim_{n\rightarrow \infty }\frac{\text{cr}(K_n)}{{n \choose 4}}=\frac{3}{8}\text{.}$$  

\deceight{Even though \octthirteen{there are} spherical drawings of $K_n$ with  $H(n)$ crossings, the Harary-Hill Conjecture remains unknown for this  special class of drawings.}
\decTwoOhTwoOh{Using the natural analogue of a Reidemeister III move (Figure \ref{fg:reid}) on an empty triangle with crossing corners (also known as triangle mutations \cite{gioan}), one can convert spherical drawings into non-spherical drawings.  The number of crossings is preserved, so there are non-spherical drawings of $K_n$ having $H(n)$ crossings.  However, the Reidemeister III move preserves pseudosphericity (defined below).  Thus, it is plausible that an avenue to determine the spherical crossing number of $K_n$ is $H(n)$ is to show that the more general pseudospherical crossing number of $K_n$ is $H(n)$.}

\begin{figure}[ht!]
\begin{center}
\decTwoOhTwoOh{\includegraphics[scale=.5]{RThreeMove}
\caption{Reidemeister III move.}}
\label{fg:reid}
\end{center}
\end{figure}

\decTwoOhTwoOh{On the other hand, Arroyo et al \cite{amrs2} introduce a notion of ``convex drawings of $K_n$" (see Definition \ref{df:convex} in this work)  and provide a partial result suggesting that all crossing-minimal drawings of $K_n$ might be convex.  In a work in preparation, we have seen that the same techniques show that if a crossing-minimal drawing of $K_n$ is convex, then it is h-convex (see Definition \ref{df:convex} below).  A principal goal of this work is to characterize h-convex drawings as ``pseudospherical".  Thus, at the time of writing, it is plausible that every crossing-minimal drawing of $K_n$ is pseudospherical.
}  

\decTwoOhTwoOh{Independently of these comments, the determination of the pseudospherical crossing number of $K_n$ is at least an interesting intermediary between the 2-page crossing number and the crossing number.}

A {\em rectilinear} drawing of a graph $G$ is a drawing of $G$ \octthirteen{in the plane} {so that its} edges are straight-line segments. One of the most important recent accomplishments in the study of crossing numbers is a result of Lov\'asz et al.\  \cite{lovasz} and, simultaneously and independently, \'Abrego and Fern\'andez-Merchant \cite{AF}, showing that rectilinear drawings have at least $H(n)$ crossings.  {(It follows from \cite{agor}, especially Theorem 11 there, that, for $n\ge 10$, rectilinear drawings have strictly more than $H(n)$ crossings.)}


The proofs that rectilinear drawings of $K_n$ have at least of $H(n)$ crossings \deceight{use} machinery for studying arrangements of pseudolines, based on the property that the edges in a rectilinear drawing can be extended to such an arrangement. 
An analogous property is satisfied by spherical drawings: each edge can be extended to a {\em great circle\/} (a circle in $\mathbb{S}^2$ of maximum diameter, and the edge is the shorter side joining the two vertices). This means that the edges can be extended into  an {\em arrangement of pseudocircles\/}, defined as a set of simple closed curves in $\mathbb{S}^2$ such that every two intersect at most twice, and every intersection is a crossing between two curves.  \remove{\deceight{[Text removed.]}}

\octone{The following questions are principal motivations for this work.}

\deceight{ \begin{enumerate}[label={\bf Question \arabic*.}, ref=\alph*,leftmargin=75pt,topsep=-0pt,itemsep=0pt]\item Is every optimal drawing of $K_n$ Reidemeister-equivalent to a spherical drawing?
\item Is the spherical crossing number of $K_n$ equal to $H(n)$?
\end{enumerate}
}

\bigskip
%
The success of the geometric approach for the rectilinear crossing number of $K_n$ \janone{suggests trying an analogous approach for spherical drawings, replacing arrangements of pseudolines with arrangements of pseudocircles}.   A principal goal of this work is to introduce {\em pseudospherical drawings\/} as a generalization of \deceight{spherical drawings.}

There are nice characterizations of pseudolinear drawings of $K_n$.  Aichholzer et al.\  \cite{oswin} prove that a drawing of $K_n$ in the plane is pseudolinear if and only if every crossing $K_4$ has the facial 4-cycle bounding the infinite face of the $K_4$.  Arroyo et al.\  \cite{amrs} have an equivalent characterization based in their notion of a {\em convex drawing\/} of $K_n$.  Arroyo et al.\  \cite{abr} characterize when a drawing of a general graph in the plane is pseudolinear.

\octone{Therefore, the other principal goal of this work is to characterize  pseudospherical drawings \octthirteen{of $K_n$}; this is also based in the notion of convex drawings mentioned in the preceding paragraphs.}

  Motivated by the remarks above, a drawing of a graph $G$ in the sphere is {\em pseudospherical\/} if\deceight{, for any distinct edges $e$ and $f$}: \begin{enumerate}[leftmargin=60pt,label={\bf (PS\arabic*)}, ref=(PS\arabic*),itemsep=0pt]
\item\label{it:psVertex} \deceight{$e$ is} contained in a simple closed curve $\gamma_e$ such that no vertex other than an end of $e$ is contained in $\gamma_e$;  
\item\label{it:psCrossings} \deceight{$|\gamma_e\cap \gamma_f|=2$} and all intersections are crossings; and
\item\label{it:psEdge} \deceight{$e\cap \gamma_f$} has at most one point.
\end{enumerate} 
A drawing $D$ is {\em weakly pseudospherical\/} if it satisfies \ref{it:psVertex}, \ref{it:psEdge}, \deceight{while \ref{it:psCrossings} is relaxed to 
\begin{enumerate}[leftmargin=60pt,label={\bf (PS\arabic*w)}, ref=(PS\arabic*w),itemsep=0pt,start=2]
\item\label{it:psWeakCrossings} $|\gamma_e\cap \gamma_f|\le 2$ and all intersections are crossings.
\end{enumerate}
}

Our main result is the following characterizations of pseudospherical drawings of $K_n$.  (The definition of h-convex will be given in Section \ref{sec:hcxPs}.)

\begin{theorem}\label{th:main}  
For a drawing $D$ of $K_n$, the following are equivalent:
\begin{enumerate}[leftmargin=60pt, label={\bf (\ref{th:main}.\arabic*)},ref=(\ref{th:main}.\arabic*)]
\item\label{it:Dps} $D$ is pseudospherical;
\item\label{it:Dwps} $D$ is weakly pseudospherical; and
\item\label{it:Dhc} $D$ is h-convex.
\end{enumerate}
\end{theorem}


The implication \ref{it:Dps}${}\Rightarrow{}$\ref{it:Dwps} is trivial.  Although the reader cannot see it now, the implication \ref{it:Dwps}${}\Rightarrow{}$\ref{it:Dhc} is quite easy.  The hard part is \ref{it:Dhc}${}\Rightarrow{}$\ref{it:Dps}.  We do not see how to prove \ref{it:Dwps}${}\Rightarrow{}$\ref{it:Dps} directly.

The proof of \ref{it:Dhc}${}\Rightarrow{}$\ref{it:Dps} proceeds by iteratively finding a pseudocircle for the next edge to extend by one {the set \deceight{$\Gamma$} of pseudocircles} satisfying the conditions \ref{it:psVertex}--\ref{it:psEdge}.  There are two principal steps involved.  The first step is to find \janseven{two initial approximations} for the next pseudocircle (\janseven{when added to $\Gamma$, either of these} will \deceight{satisfy \ref{it:psWeakCrossings}}), while the second is to \janseven{repeatedly shift one of the initial approximations, gradually increasing the number of pseudocircles in \deceight{$\Gamma$} that it intersects until it intersects them all, at which point it is a $\Gamma$-transversal}.

Each of these steps has its challenges.  To find the initial \janseven{approximations}, we require an extensive study of h-convex drawings; this is done in Section \ref{sec:hcxPs}.   Crucially, each edge of an h-convex drawing partitions the vertex set of $K_n$ into two pseudolinear drawings of (typically smaller) complete subgraphs\janseven{; the initial approximations are near the outer boundary of each of these pseudolinear subdrawings}.  (Motivated by this study, we tried to show this partitioning holds for  pseudospherical drawings of general graphs.  In a personal communication, {Xinyu Lily} Wang has shown that it is false  in the more general context.)

\janseven{Producing the pseudocircle transversal requires shifting one of a pair of initial approximations from the preceding paragraph towards the other using analogues of Reidemeister II and III moves.}
The core of the shifting turns out to require a characterization of  arrangements of pseudocircles. \remove{\deceight{[Text removed]}} 

Let $\Sigma$ be an {\em arrangement of simple closed curves in the sphere\/}; that is, a set of simple closed curves, any two of which have finitely many intersections, all of which are crossings.  A {\em spiral\/} of $\Sigma$ is an arc \decTwoOhTwoOh{(that is, a homeomorph of a compact interval)} in the union $P(\Sigma)$ of the curves in $\Sigma$ that always makes the same---either all left or all right---turn in changing from one curve to another \janone{(see Figure \ref{fg:spiral})}.  \deceight{In Section \ref{sec:technical}, we give a more precise definition of spiral and define {\em coherent spirals\/}}. The auxiliary result that we need is the following.

\begin{theorem}\label{th:auxiliary}
Let $\Sigma$ \octthirteen{be an arrangement of simple closed curves}.  Then $\Sigma$ is an arrangement of pseudocircles if and only if $\Sigma$ has no coherent spirals.
\end{theorem}

Our \janone{study of spirals} led us to two drawings, one for each of $K_{10}$ and $K_9$.  The former has no extension of its edges to an arrangement of pseudocircles.  The latter has such an extension, but no extension has all pseudocircles crossing exactly twice.  These examples are exhibited in Section \ref{sec:K9andK10}.    

\deceight{Independently, Aichholzer at the \julythirteen{2015 Crossing Number Workshop in Rio de Janeiro} and  Pilz  at \janone{the} Geometric Graph Week in Berlin (2015) asked if every drawing of $K_n$ has an extension to an arrangement of pseudocircles.   The drawing of $K_{10}$ answers this question in the negative.  The drawing of $K_9$ answers negatively the related question of improving a pseudocircular extension to a pseudospherical extension.}

\octthirteen{\deceight{In the next section, we introduce spirals and prove} Theorem \ref{th:auxiliary}.  \janseven{Section~\ref{sec:transversal} contains the proof that an initial pair of approximations for $\Gamma$ implies the existence of a $\Gamma$-transversal}. 
Section \ref{sec:hcxPs} introduces h-convex drawings and proves the easy implication \ref{it:Dwps}${}\Rightarrow{}$\ref{it:Dhc}.  Section~\ref{sec:HcxIsPS} contains the necessary discussion of h-convex drawings for the initial \janseven{approximations}.  The proof of Theorem \ref{th:main} is completed in Section \ref{sec:hCxHasExactExtension} by showing how to inductively obtain in an h-convex drawing the initial curves for the sweeping.  The interesting drawings of $K_9$ and $K_{10}$ are in Section \ref{sec:K9andK10}\deceight{, while Section \ref{sec:conclusion} has concluding remarks.}}

}

\section{Spirals and Coherence}\label{sec:technical}

\mayoneone{A spiral is a special arc}, \mayonefour{illustrated in Figure \ref{fg:spiral} and} \mayoneone{defined precisely below, in the union of \janseven{an arrangement of} simple closed curves that has all its crossings facing the same side of the arc.} {(Alternatively, a spiral always makes the same---either all left or all right---turn \decTwoOhTwoOh{when it changes} from one curve to another; \decTwoOhTwoOh{it is permitted to continue straight through a crossing}.)}    The main result in this section is \mayseven{a characterization of arrangements of pseudocircles as {either (a) not having any ``coherent" spirals or, equivalently,} (b) every spiral has an ``external segment".  That an arrangement of pseudocircles has no coherent spiral is the point required for the proof of Theorem \ref{th:main}.}

\begin{figure}[ht!]
\begin{center}
\includegraphics[scale=1]{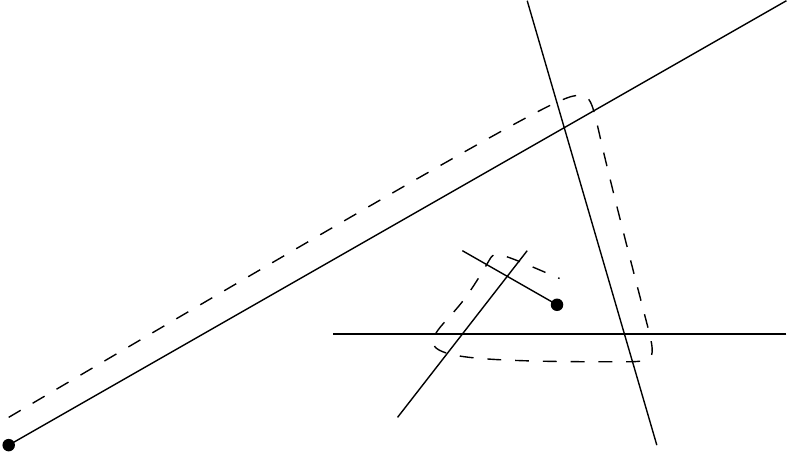}
\caption{An example of a spiral.}
\label{fg:spiral}
\end{center}
\end{figure}

\mayoneone{\octthirteen{In an arrangement of simple closed curves $\Gamma$ \deceight{(defined just before Theorem \ref{th:auxiliary})},} three or more of the  curves in $\Gamma$ may cross at the same point.  This is necessary for extensions of drawings of $K_n$, where $n-1$ curves pairwise cross at each vertex.}

\mayseven{A non-trivial arc \decTwoOhTwoOh{(that is, not just a single point)} $A$ \octthirteen{in the union $P(\Gamma)$ of the curves in $\Gamma$}} with ends $s$ and $t$ has a unique decomposition sequence $\alpha_0\alpha_1\dots\alpha_m$ of subarcs of $A$\mayonefour{, as depicted in Figure \ref{fg:arcExample},} such that: \begin{enumerate}[label={(\roman*)},leftmargin = .50 truein,ref={(\roman*)}]
\item $s$ is an end of $\alpha_0$, $t$ is an end of $\alpha_m$; 
\item for each $i=0,1,\dots,m$, there is a $\gamma_i\in \Gamma$ such that $\alpha_i\subseteq \gamma_i$; and 
\item\label{it:distinct} for $i=1,2,\dots,m$,  the curves $\gamma_{i-1}$ and $\gamma_i$ in $\Gamma$ are distinct and $\alpha_{i-1}\cap \alpha_i$ is a crossing of $\gamma_{i-1}$ and $\gamma_i$.
\end{enumerate} 
The number $m$ is the {\em weight of $A$\/}.  \mayseven{Figure \ref{fg:arcExample} shows an arrangement of simple closed curves with an arc of weight 3.}
For $i=1,2,\dots,m$, the crossing of $\alpha_{i-1}$ with $\alpha_i$ is denoted $\times_i$.  For convenience, we set $\times_0=s$ and $\times_{m+1}=t$.


\begin{figure}[ht!]
\begin{center}
\includegraphics[scale=.7]{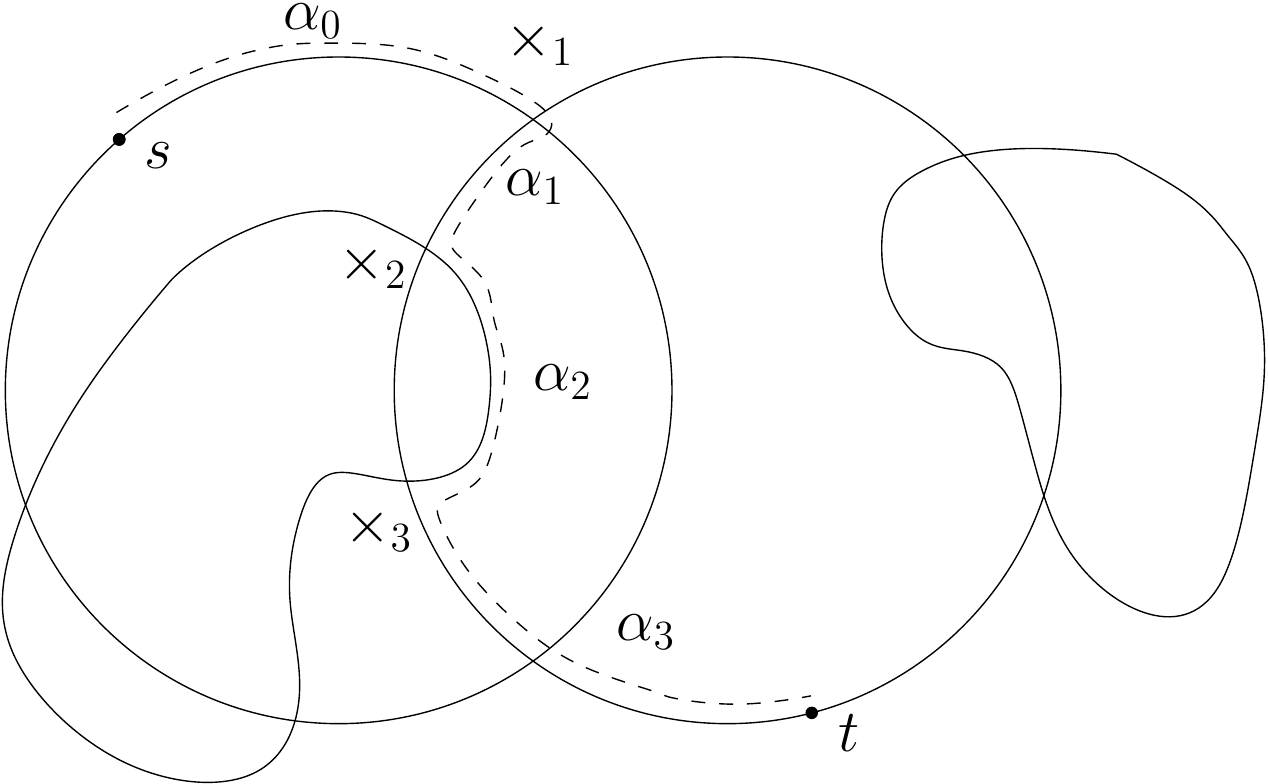}
\caption{\apriltwonine{\decTwoOhTwoOh{Illustrated is an arrangement of four simple closed curves.  The $st$-arc indicated by the dotted curve has decomposition sequence $\alpha_0\alpha_1\alpha_2\alpha_3$.  Notice that the $\times_3t$-arc $\alpha_3$ makes no turn at the crossing in its middle and the right-hand circle is both $\gamma_1$ and $\gamma_3$.}}}
\label{fg:arcExample}
\end{center}
\end{figure}

For $i=0,1,\dots,m$, $\alpha^-_{i}$ and $\alpha^+_i$ are the closures of the components of $\gamma_{i}\setminus A$ incident with $\times _i$ and $\times_{i+1}$, respectively.  Four such $\alpha^{\varepsilon}_j$ are illustrated in Figure \ref{fg:arcDecomposition}.   \mayseven{Evidently,  $\alpha_i^\pm$ consists of the continuation of $\alpha_i$ through either $\times_i$ ($-$) or $\times_{i+1}$ ($+$) up to the next meeting with $A$.  }

The continuations $\alpha_i^+$ and $\alpha_{i+1}^-$ both leave $\times_{i+1}$ on the same side {of} $A$.  This is the {\em side of $A$ that $\times_{i+1}$ faces\/}.  \mayonefour{In Figure \ref{fg:arcExample}, $\times_1$ and $\times_2$ face different sides of the dotted arc.}  We are now prepared for the definitions of \mayseven{spiral, external segment, and coherence.}

\begin{definition}[Spiral, External Segment, Coherence]\label{df:coherence} 
Let $\Gamma$ be an arrangement of simple closed curves and 
let $A$ be an arc in $P(\Gamma)$ with decomposition sequence $\alpha_0\alpha_1\dots$ $\alpha_m$.  For each $i=0,1,2\dots,m$, let $\gamma_i\in \Gamma$ be such that $\alpha_i\subseteq \gamma_i$.  (Only consecutive $\gamma_i$ are required to be distinct; if $j>i+1$, then $\gamma_{j}$ could be the same as $\gamma_i$.)

\begin{enumerate}[label=(\ref{df:coherence}.\arabic*),leftmargin = 60pt,ref=(\ref{df:coherence}.\arabic*)] 
\item The arc $A$ is a {\em spiral\/} if all of $\times_1,\dots,\times_m$ face the same side of $A$.
\item For $i\in\{0,1,2,\dots,m\}$, the segment $\alpha_i$ is \[
\left\{\begin{array}{ll}\textrm{{\em external\/} for $A$},  &\gamma_i\cap A=\alpha_i\\ \textrm{{\em internal\/} for $A$,}& \textrm{otherwise}\end{array}\right.\,.
\]
\item For $i\in\{0,1,2,\dots,m\}$ and $\varepsilon\in \{+,-\}$, the arc $\alpha^\varepsilon_i$ is a {\em coherent extension\/} if\maytwonine{: (a)} $\alpha_i$ is internal for $A$ and \maytwonine{(b)} $\alpha^\varepsilon_i$ has both ends on the same side (see discussion of ``side" below) of the interior of $A$.  \maytwonine{(In Figure \ref{fg:arcDecomposition}}, $\alpha^-_1$ is not a coherent extension but $\alpha_0^+$, $\alpha_3^-$, and $\alpha_3^+$ are.)

\item\label{it:coherent} For $i\in\{0,1,2,\dots,m\}$, the segment $\alpha_i$ is {\em coherent\/} if \redchange{at least one of $\alpha^-_i$ and $\alpha^+_i$ is a  coherent extension}.  In Figure \ref{fg:arcDecomposition}, both $\alpha_0$ and $\alpha_3$ are coherent.

\item\label{it:Acoherent}  The arc $A$ is {\em coherent\/} if, for each $i=0,1,2,\dots,m$,  $\alpha_i$ is coherent.
\end{enumerate}
\end{definition}

\begin{figure}
\begin{center}
\includegraphics[scale=.875]{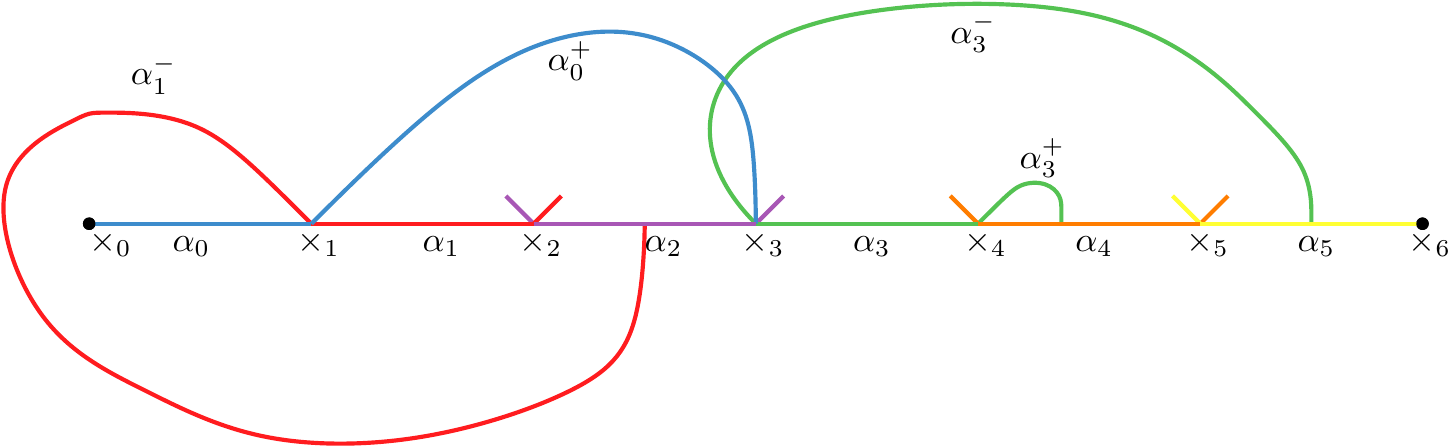}
\caption{Illustrating several points in Definition \ref{df:coherence}.  The arc $A$ is homeomorphic to a straight line segment and is so represented in the diagram.}
\label{fg:arcDecomposition}
\end{center}
\end{figure}

As we traverse $A$ from one end to the other, there are naturally left and right sides.  The two ends of the arc $\alpha^\pm_i$ are in $A$.  The issue in the definition of coherence is: are the points near each end of $\alpha^\pm_i$ on the same side of $A$ or not. ``Left" and ``right" depend on an orientation of $A$ and are irrelevant to us.

 \greenchange{To be on the same side, the ends of $\alpha^\pm_i$ must be in the interior of $A$.  As the points $\times_0$ and $\times_{m+1}$ are not in the interior of $A$, we have the following.}

 \begin{remark}\label{rk:alpha0-NotCoherent} \greenchange{Neither $\alpha_0^-$ nor  $\alpha_m^+$ is coherent.}\end{remark}  

\mayseven{The following characterization of \octthirteen{an arrangement} of pseudocircles in terms of its spirals seems to be quite interesting in its own right.  We only need \ref{it:arrpseudo}${}\Rightarrow{}$\ref{it:noCoherSpiral} for the proof of Theorem \ref{th:main}.}

\begin{theorem}\label{th:technical}  Let $\Gamma$ be an arrangement of simple closed curves in the sphere.  \mayseven{Then the following are equivalent:
\begin{enumerate} [label={\bf(\ref{th:technical}.\arabic*)}, ref=(\ref{th:technical}.\arabic*), leftmargin=60pt]
\item\label{it:arrpseudo} $\Gamma$ is an arrangement of pseudocircles;

\item\label{it:noCoherSpiral} $P(\Gamma)$ has no coherent spirals; 

\item\label{it:noWt1CoherSpiral} $P(\Gamma)$ has no coherent spirals with weight 1; and
\item\label{it:SpiralsAllExternal} every spiral $A$ in $P(\Gamma)$ has a segment external for $A$.
\end{enumerate}
}
 \end{theorem}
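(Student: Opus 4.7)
The plan is to prove the equivalences by cycling through the implications $(1)\Rightarrow(4)\Rightarrow(2)\Rightarrow(3)\Rightarrow(1)$. The two steps $(4)\Rightarrow(2)$ and $(2)\Rightarrow(3)$ are essentially definitional: an external segment cannot be coherent, because a coherent extension of $\alpha_i$ presupposes that $\alpha_i$ is internal (Definition \ref{df:coherence}.3(a)), so every coherent spiral has all its segments internal, hence no external segment; and weight-$1$ coherent spirals are a special case of coherent spirals.

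For $(3)\Rightarrow(1)$, I argue the contrapositive. If some pair $\gamma,\gamma'\in\Gamma$ crosses at three or more points, choose three of them $p,q,r$ so that $p$ is the immediate neighbour of $q$ along $\gamma'$ and $r$ is the immediate neighbour of $q$ along $\gamma$ (swapping the roles of $\gamma$ and $\gamma'$ if necessary). Let $\alpha_0\subset\gamma$ be a short sub-arc from a point $s$ to $q$ whose interior meets $\gamma'$ only at $r$, and let $\alpha_1\subset\gamma'$ be a short sub-arc from $q$ to a point $t$ whose interior meets $\gamma$ only at $p$. Then $A=\alpha_0\alpha_1$ has weight $1$ with $\times_1=q$, and both segments are internal, witnessed by $p\in\gamma\cap A\setminus\alpha_0$ and $r\in\gamma'\cap A\setminus\alpha_1$. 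The continuation $\alpha_0^+$ is the arc of $\gamma\setminus A$ from $q$ to $p$; a local analysis---at the corner $q$ it emerges on the ``outside'' side of $A$, and at the transversal crossing $p$ it arrives from the corresponding side of the smooth piece $\alpha_1$---shows that both ends of $\alpha_0^+$ lie on the same side of the interior of $A$. Hence $\alpha_0^+$ is a coherent extension of $\alpha_0$, and symmetrically $\alpha_1^-$ is a coherent extension of $\alpha_1$. So $A$ is a weight-$1$ coherent spiral, contradicting $(3)$.

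The main obstacle is $(1)\Rightarrow(4)$, which I would prove by induction on the weight $m$ of the spiral $A$. For $m=0$, $A$ is a sub-arc of a single $\gamma_0$, so $\gamma_0\cap A=\alpha_0$ is external. For $m=1$, if both $\alpha_0$ and $\alpha_1$ were internal, there would be a point of $\gamma_0\cap\gamma_1$ in $\alpha_1\setminus\{\times_1\}$ and another in $\alpha_0\setminus\{\times_1\}$, forcing $|\gamma_0\cap\gamma_1|\ge 3$ and violating the pseudocircle condition. For $m\ge 2$, take a counterexample $A$ of minimum weight and seek to shorten $A$ to a strictly shorter sub-arc $A'$ that is still a spiral without external segments, contradicting minimality. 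Since every $\alpha_i$ is internal, for each $i$ there is an extra meeting point $z_i\in\gamma_i\cap A\setminus\alpha_i$, and an arc of $\gamma_i\setminus A$ terminating at $z_i$ can serve as a shortcut replacing a portion of $A$.

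The principal difficulty in the inductive step is that shortening $A$ can turn previously-internal segments into external ones: a segment $\alpha_j$ that was internal only because $\gamma_j$ met $A$ inside the removed portion becomes external in $A'$. The reduction must therefore be chosen so that, for every retained segment, its witnessing extra meeting point lies in the retained part of $A$; and the new crossing introduced by the shortcut must face the same side of $A'$ as the retained $\times_i$'s, so that $A'$ remains a spiral. The pseudocircle bound $|\gamma_i\cap\gamma_j|\le 2$ together with the spiral condition that all crossings $\times_1,\dots,\times_m$ face the same side of $A$ imposes a rigid combinatorial structure on where the extras can live, which should permit a legal shortcut to be selected. Carrying out this structural analysis---showing that a valid reduction always exists---is the heart of the proof.
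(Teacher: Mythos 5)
Your cycle $(1)\Rightarrow(4)\Rightarrow(2)\Rightarrow(3)\Rightarrow(1)$ is organized sensibly, and the two definitional implications are fine, but the step $(1)\Rightarrow(4)$ --- which in your scheme carries the entire content of the paper's two hard implications \ref{it:arrpseudo}${}\Rightarrow{}$\ref{it:noCoherSpiral} and \ref{it:noCoherSpiral}${}\Rightarrow{}$\ref{it:SpiralsAllExternal} --- is not actually proved. You take a least-weight spiral with no external segment and propose to shorten it along an extension arc to a shorter spiral with no external segment, but you yourself concede that showing ``a valid reduction always exists'' is ``the heart of the proof,'' and no argument is given. This is exactly where the paper's work lies: its proof of \ref{it:arrpseudo}${}\Rightarrow{}$\ref{it:noCoherSpiral} runs a least-weight \emph{coherent} spiral through four successive claims (controlling where a coherent extension $\alpha_j^{+}$ can land, excluding backward-landing extensions, propagating non-coherence of the $\alpha_k^{-}$ along the spiral, and finally forcing $a_j^{+}\in\alpha_{j+1}$ and $a_j^{-}\in\alpha_{j-1}$) before extracting two consecutive curves with $|\gamma_{j-1}\cap\gamma_j|\ge 3$; and \ref{it:noCoherSpiral}${}\Rightarrow{}$\ref{it:SpiralsAllExternal} is a separate least-weight argument using Observation \ref{obs:trivial} to produce four intersections of a single pair of curves. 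The obstacle you name --- that a shortcut can turn internal segments external, and that the shortened arc must remain a spiral and remain coherent/external-free --- is genuine (it is what the paper's first claim handles with the modified segments $\alpha_j^{*},\alpha_i^{*}$ and the auxiliary closed curve $C$), and nothing in your outline shows it can always be overcome. As it stands, the theorem's main content is missing.

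A secondary, fixable gap: in $(3)\Rightarrow(1)$, the coherence of $\alpha_0^{+}$ is not a local matter. With your choice of $p,q,r$, the continuation $\alpha_0^{+}$ is the $\gamma$-arc from $q$ to $p$, and its interior may cross $\gamma'$ at crossings not lying on $A$; which side of $A$ it reaches $p$ from is governed by the parity of those interior crossings, not by the local pictures at $q$ and $p$. Your construction does work, but for a non-local reason: since $p$ and $q$ are adjacent along $\gamma'$, the arc of $\gamma'$ joining them lies in one side $U$ of $\gamma$, and a Jordan-curve pairing argument (excursions of $\gamma'$ into $U$ pair up the crossings interior to the $\gamma$-arc from $q$ to $p$) shows that the interior of $\alpha_0^{+}$ meets $\gamma'$ an even number of times, so it returns on the correct side; the paper instead sidesteps this by taking the first three crossings $p_1,p_2,p_3$ along $\gamma_0$ from a point $s$, so that its $\alpha_0^{+}$ runs between consecutive crossings and has interior disjoint from $\gamma_1$. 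One of these arguments must be supplied (and likewise for $\alpha_1^{-}$); ``a local analysis'' does not suffice.
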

 
\octthirteen{This characterization fits in well with much recent work on arrangements of pseudocircles.  The paper of Felsner and Scheucher \cite{fs} is one example; their references [2], [7], [10], [15], [19], and [26] are others.  Felsner and Scheucher have a web page devoted to pseudocircles \cite{fsweb}.  Another recent work about unavoidable configurations in the sense of Ramsey's Theorem  is Medina et al.~\cite{mrs}.}

The example {to the left in} Figure \ref{fg:counterexample} \mayseven{has a spiral $A$ in an arrangement of pseudocircles}.   \mayseven{This spiral is not \octthirteen{coherent because} $\alpha_1$ is external for $A$.}

\begin{figure}[ht!]
\includegraphics[scale=1]{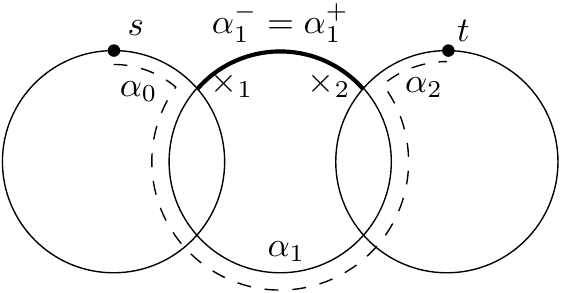} \hfill \includegraphics[scale=1]{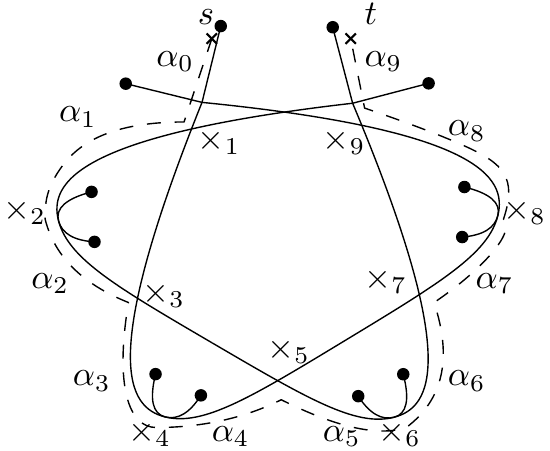}
\caption{\octthirteen{The dotted arcs are spirals. The left-hand one has weight 2 and has an external segment. The right-hand one is coherent and is in a drawing related to the examples in Section \ref{sec:K9andK10}.}   
}
\label{fg:counterexample}
\end{figure}

{In addition to its use in the proof of Theorem \ref{th:main}, Theorem \ref{th:technical}  is useful for constructing drawings of graphs that cannot be extended to arrangements of pseudocircles. For example,  the right-hand diagram in Figure \ref{fg:counterexample} cannot be extended to an arrangement of pseudocircles because the $st$-arc indicated by the dashed curve induces a coherent spiral in any such extension. A similar idea is used in Section \ref{sec:K9andK10} to construct a non-extendible drawing of $K_{10}$.}

\begin{cproofof}
{Theorem \ref{th:technical}}
\mayoneone{\decTwoOhTwoOh{The implication \ref{it:noCoherSpiral}${}\Rightarrow{}$\ref{it:noWt1CoherSpiral} is trivial}.  We prove \ref{it:noWt1CoherSpiral}${}\Rightarrow{}$\ref{it:arrpseudo} and \ref{it:arrpseudo}${}\Rightarrow{}$\ref{it:noCoherSpiral} to complete the proof that \ref{it:arrpseudo}, \ref{it:noWt1CoherSpiral}, and \ref{it:noCoherSpiral} are equivalent.  \decTwoOhTwoOh{Since \ref{it:SpiralsAllExternal}${}\Rightarrow{}$\ref{it:noCoherSpiral} is trivial, we} finish the proof with \ref{it:noCoherSpiral}${}\Rightarrow{}$\ref{it:SpiralsAllExternal}.}

\mayseven{\ref{it:noWt1CoherSpiral}${}\Rightarrow{}$\ref{it:arrpseudo}.  Suppose by way of contradiction that $\gamma_0,\gamma_1$ are distinct curves in $\Gamma$ such that $|\gamma_0\cap \gamma_1|>2$.  Then $|\gamma_0\cap \gamma_1|\ge 4$.  Let $s\in \gamma_0\setminus \gamma_1$ and, traversing $\gamma_0$ in one direction starting at $s$, let $p_1,p_2,p_3$ be the first three points of $\gamma_0\cap \gamma_1$ encountered.}

\mayoneone{Let $A$ be the arc obtained by starting at $s$, continuing along $\gamma_0$ through $p_1$ to $p_2$ and then following $\gamma_1\setminus\{p_1\}$ through $p_3$ to a point $t$ just beyond $p_3$.  The decomposition of $A$ is $\alpha_0\alpha_1$, with, for $i=1,2$, $\alpha_i\subsetneq A\cap \gamma_i$.  Since every arc in $P(\Gamma)$ with weight at most 1 is a spiral, $A$ is a spiral.}
\maytwonine{The fact that the points $p_2$ and $p_3$ of $\gamma_0\cap \gamma_1$ are consecutive in $\gamma_0$ imply that $\alpha_0^+$ is coherent.}

\mayoneseven{
On the other hand, the $sp_2$-subarc of $A$ intersects the $p_1p_2$-subarc of $\gamma_1\setminus\{p_3\}$ just in $\{p_1,p_2\}$.  This shows that $\alpha_1^-$ is coherent, completing the proof that $A$ is coherent, the required contradiction.
}

  \mayseven{\ref{it:arrpseudo}${}\Rightarrow{}$\ref{it:noCoherSpiral}.  To obtain a contradiction, suppose $A$ is a coherent spiral with least weight.  An arc with weight 0 is not coherent, so the decomposition  $\alpha_0\alpha_1\cdots\alpha_m$ of $A$ has $m\ge 1$.}

\marchTen{The first claim imposes constraints on what happens at points ``under" a forward jump such as, in Figure \ref{fg:arcDecomposition}, $\times_2$ under $\alpha^+_0$.} \mayonefour{For an extension $\alpha_i^\varepsilon$ of $\alpha_i$, $a_i^\varepsilon$ denotes the other end of $\alpha_i^\varepsilon$.}

\begin{claim} \label{claim1''}
Suppose that $\alpha_j^+$ is a coherent extension of $\alpha_j$.  If there is an $\ell\ge j+2$ such that $a^+_j$ is in $\alpha_\ell\setminus \{\times_\ell\}$,  
then:
\begin{enumerate}[ref=(\arabic*)]
\item\label{it:minusExtensions} for each \greenchange{$i=j+2,j+3,\dots, \ell$}, \redchange{$\alpha_i^-$ is disjoint from $\alpha_j^+\setminus \{a^+_{j}\}$; and 
\item\label{it:j+1HatPlus} either there exists $i\in\{j+2,j+3,\dots,\ell\}$ such that $a_j^+\in \alpha^-_i$, or $\alpha_{j+1}^+$ intersects $\alpha_j^+$}.
\end{enumerate}

Likewise, suppose $\alpha_j^-$ is a coherent extension of $\alpha_j$.  If there is an $\ell\le j-2$ such that $a^-_j$ is in \greenchange{$\alpha_\ell\setminus \{\times_{\ell+1}\}$},  
then:
\begin{enumerate}[start=3,ref=(\arabic*)]\item for each $i=\ell,\ell+1,\dots,j-2$, \redchange{$\alpha_i^+$ is disjoint from $\alpha_j^-\setminus\{a^-_j\}$; and \item\label{it:backwardChoice} either\marchTwentysix{ there exists} $i\in \{\ell,\ell-1,\dots,j-2\}$\marchTwentysix{ such that $a^-_j\in \alpha^+_i$}, or $\alpha_{j-1}^-$ intersects $\alpha_j^-$}.
\end{enumerate}
\end{claim}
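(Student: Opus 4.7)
\emph{Proof proposal.}

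The plan is to exploit the Jordan curve structure furnished by the coherent extension $\alpha_j^+$ together with the minimality of $A$ as a coherent spiral (the proof of \ref{it:arrpseudo}${}\Rightarrow{}$\ref{it:noCoherSpiral} begins by taking $A$ to have least weight among coherent spirals). Since $\alpha_j^+$ has interior disjoint from $A$ and both endpoints on the spiral side of $A$, the curve $C := \alpha_j^+ \cup A|_{[\times_{j+1},a_j^+]}$ is a simple closed curve bounding a closed disk $D$ whose interior lies on the spiral side of $A' := A|_{[\times_{j+1},a_j^+]}$. For each $i \in \{j+2,\dots,\ell\}$, the crossing $\times_i$ is interior to $A'$, and because $A$ is a spiral, $\alpha_i^-$ leaves $\times_i$ on the spiral side, that is, into $D$.

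For (1), I would argue by contradiction. Suppose some $\alpha_i^-$ meets $\alpha_j^+$ at an interior point $p \ne a_j^+$, chosen as the first such point along $\alpha_j^+$ from $\times_{j+1}$. Form the arc
\[
A'' := A|_{[s,\times_{j+1}]} \cdot \alpha_j^+|_{[\times_{j+1},p]} \cdot \alpha_i^-|_{[p,\times_i]} \cdot A|_{[\times_i,t]},
\]
taking the orientation of the $\alpha_i^-$ piece that remains inside $D$ (available because $|\gamma_j \cap \gamma_i|\le 2$ under \ref{it:arrpseudo}). Its decomposition retains the switches $\times_1,\dots,\times_j,\times_{i+1},\dots,\times_m$ and contributes the single new switch at $p$ (no switch is introduced at $\times_{j+1}$ or $\times_i$, because both sides there lie on $\gamma_j$ and on $\gamma_i$ respectively); its weight is therefore $m - (i-j-1) \le m-1$. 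The new switch $p$ faces the spiral side of $A''$ (both incident pieces lie in $D$), so $A''$ is a spiral, and coherence at the surviving switches is inherited from $A$ (the segments flanking the excised region are enlarged but still admit the same coherent extensions as in $A$). Thus $A''$ is a coherent spiral of smaller weight than $A$, contradicting minimality.

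For (2), suppose toward contradiction that no $\alpha_i^-$ ($i \in \{j+2,\dots,\ell\}$) ends at $a_j^+$ and that $\alpha_{j+1}^+ \cap \alpha_j^+ = \emptyset$. Part (1) then forces each such $\alpha_i^-$, and $\alpha_{j+1}^+$ as well, to stay inside $D$ and terminate on $A' \setminus \{a_j^+\}$. From this trapped configuration I plan to extract a strictly shorter coherent spiral, again contradicting minimality of $A$. A natural construction is to follow $A$ to $\times_{j+1}$, detour along $\alpha_{j+1}\cup \alpha_{j+1}^+$ to its landing point on $A'$, and then continue along $A$ to $t$; since $\alpha_{j+1}^+$ lives in $D$ and lands strictly before $a_j^+$, the resulting arc skips at least one switch of $A$ while inheriting coherence because all its new features sit in $D$.

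The main obstacle lies in verifying coherence — not merely spiral-hood — of the surgically modified arcs. In (1), one must confirm that the enlarged segments $\alpha_0\cdots\alpha_j \cup \alpha_j^+|_{[\times_{j+1},p]}$ and $\alpha_i^-|_{[p,\times_i]} \cup \alpha_i \cdots \alpha_m$ still admit coherent extensions in $A''$; these come respectively from the coherence of $\alpha_j^+$ itself and of $\alpha_i$ in $A$. The analogous verification in (2) is more delicate, since the correct shortcut depends on where the points $a_{j+1}^+$ and the various $a_i^-$ land on $A'$ and on which of $\alpha_{j+1}^{\pm}$ was coherent in $A$, so a small case analysis is needed to select the right concatenation that produces the contradictory shorter coherent spiral.
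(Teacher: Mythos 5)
Your surgery for part (1) is the same as the paper's (excise $A[\times_{j+1},\times_i]$ and reroute through $\alpha_j^+$ and $\alpha_i^-$ via their intersection point $p$), and your weight count agrees, but the crux of the paper's proof is precisely the step you assert rather than prove, and your assertion is wrong as stated. The surviving segments do \emph{not} in general ``still admit the same coherent extensions as in $A$'': with respect to the surgered arc $A''$ their extensions change, because (a) an old coherent extension may be crossed by the detour $\alpha_j^+[\times_{j+1},p]\cup\alpha_i^-[p,\times_i]$, which is now part of the arc, so the relevant extension is a truncation whose terminal point must be shown to lie on the correct side of $A''$; and (b) an old coherent extension may terminate on the deleted portion of $A$ strictly between $\times_{j+1}$ and $\times_i$, so the extension with respect to $A''$ continues past that point and one must show it returns to $A''$ at all, and on the correct side. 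The paper handles both cases at once by introducing the auxiliary simple closed curve consisting of the detour together with $A[\times_{j+1},\times_i]$ and arguing via the side (``outside'') of that curve containing $A[s,\times_{j+1}]$; nothing in your sketch plays that role, and you instead locate the ``main obstacle'' in the two enlarged segments, which are in fact the easy ones. You also only treat an interior intersection point $p$ of $\alpha_j^+$, whereas the claim asserts disjointness from all of $\alpha_j^+\setminus\{a_j^+\}$; the boundary case $p=\times_{j+1}$ can genuinely occur (several curves may cross at one point) and needs the paper's separate rotation argument at the triple crossing to see that the shortened arc is still a spiral.

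For part (2) you correctly set up the trapped configuration, but your extraction of a contradiction differs from the paper's and does not work as sketched. The paper concludes by showing that the subarc $A[\times_{j+1},a_j^+]$ is \emph{itself} a coherent spiral of smaller weight: by part (1) and the two assumptions, $\alpha_{j+1}^+$ and each $\alpha_i^-$ avoid $\alpha_j^+$, hence are confined to the disc bounded by $\alpha_j^+\cup A[\times_{j+1},a_j^+]$ and (after the same side argument as in part (1)) furnish coherent extensions for every segment of that subarc. Your alternative arc, detouring along $\alpha_{j+1}^+$ and ``skipping at least one switch,'' is unjustified: you have no control over where $a_{j+1}^+$ lands--it may land on $\alpha_{j+1}$ or $\alpha_{j+2}$, so that no switch is skipped, or the concatenation may fail to be a simple arc--and once again the coherence of the modified arc is left entirely to an unspecified ``small case analysis.'' So in both halves the missing ingredient is the coherence-transfer argument, which is the real content of this claim.
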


\begin{proof}
We prove the first statement; the ``likewise" is the same, but for the traversal of $A$ in the reverse direction.   

By way of contradiction, suppose first that, for some $i\in\{j+2,j+3,\dots, \ell\}$, $\alpha_i^-$ \redchange{intersects $\alpha_j^+\setminus \{a^+_{j}\}$; let $\meet ij$ be the first intersection with $\alpha^+_j$ as we traverse $\alpha_i^-$ from $\times_i$.   The interior of \mayseven{the subarc $\alpha^-_i[\times_i,\meet ij]$} of $\alpha^-_i$ from $\times_i$ to $\meet ij$ is on the side of the unique simple closed curve contained in $A\cup \alpha^+_j$ that is opposite to the side that contains \mayonefour{$A[s,\times_{j+1}]$}. }  

See Figure \ref{fg:claim1} for an illustration of this proof.

\begin{figure}[ht]
\begin{center}
\includegraphics[scale=1]{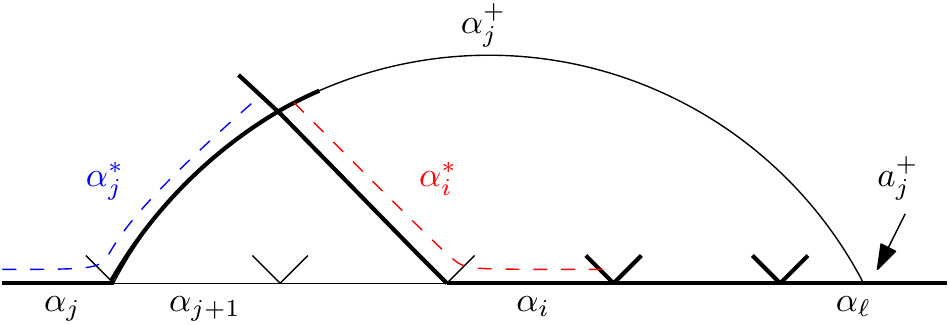}
\caption{Illustration for the proof of Claim 1.
}
\label{fg:claim1}
\end{center}
\end{figure}

\redchange{Let $\alpha^*_j$ be the subarc of $\gamma_j$ consisting of $\alpha_j$ and the portion of $\alpha_j^+$ \redchange{from $\times_{j+1}$ to $\meet ij$}.  Likewise, let $\alpha^*_i$ be the subarc of $\gamma_i$ consisting of $\alpha_i$ and $\alpha_i^-[\times_i,\meet ij]$.  \decTwoOhTwoOh{The arc $A'$ with decomposition \maytwonine{$\alpha_0\dots\alpha_{j-1}\alpha_j^*\alpha_i^*\alpha_{i+1}\dots\alpha_m$}} has smaller weight than $A$.  Also, even if $\meet ij=\times_{j+1}$, $A'$ is a spiral.  (In case $\meet ij=\times_{j+1}$, then $\gamma_j$, $\gamma_{j+1}$, and $\gamma_i$ all cross at $\times_{j+1}$.  This ensures that the cyclic rotation of these three curves at $\times_{j+1}$ is $\alpha_j,\alpha_{j+1},\alpha_i^-,\alpha_j^+,\alpha_{j+1}^-,\beta_i$, where $\beta_i$ is the continuation of $\gamma_i$ from $a_i^-=\meet ij$.)}

\redchange{To see that $A'$ is coherent, first let $C$ be the simple closed curve $(\alpha^*_j\setminus \alpha_j)\cup (\alpha^*_i\setminus \alpha_i)\cup A[\times_{j+1},\times_i]$. For each of the segments $\alpha'$ of $A'$, $\alpha'$ contains some segment $\alpha_k$ of $A$.  Let $\alpha^\varepsilon_k$ be a coherent extension of $\alpha_k$ for $A$.  Follow $\alpha^\varepsilon_k$ from its end in $\alpha_k$ (or, if $k\in \{j,i\}$, from $\meet ij$). If we never encounter $C$, then we arrive at $A'$ on the same side.  On the other hand, if we encounter $C$, it is not at a point in $A$ and so it is in \greenchange{$(\alpha^*_j\setminus \alpha_j)\cup (\alpha^*_i\setminus \alpha_i)$}. }

\greenchange{Label as the {\em outside of $C$\/} the side of $C$ containing \mayonefour{$A[s,\times_{j+1}]$}.  Since $\alpha^\varepsilon_k$ starts on the outside of $C$, its first intersection with $C$ is from that side.  Thus, the portion of $\alpha^\varepsilon_k$ up to that first intersection with $C$ is a coherent extension of $\alpha'$, as required.}

To complete the proof that $A'$ is coherent, we note that, if, for the segment $\alpha$ of $A'$, both $\alpha^-$ and $\alpha^+$ are coherent, then they are contained in coherent extensions of the corresponding segment of $A$.  Since these extensions for $A$ \greenchange{are distinct, as extensions for $A'$ they are also distinct.  Thus, there is a coherent spiral with weight less than $m$, a contradiction.}

\greenchange{For \ref{it:j+1HatPlus}, if, for each $i=\ell,\ell+1,\dots,j-1$, $a_j^+\notin \alpha^-_i$ and $\alpha^+_{j+1}$ is disjoint from $\alpha^+_j$, then $\alpha^+_{j+1}$ and (using \ref{it:minusExtensions}) each $\alpha^-_i$ is a coherent extension in  $A[\times_{j+1},a^+_j]$.   The same argument as in the preceding paragraph shows that $A[\times_{j+1},a^+_j]$ is a coherent spiral with smaller weight than $A$, a contradiction.} 
\end{proof}

The next claim considers ``reverse" coherent extensions.   \marchTen{This claim rules out the possibility of an extension such as $\alpha^-_3$ in Figure \ref{fg:arcDecomposition}.}

\begin{claim} \label{claim2''}  There do not exist $j,\ell\in \{0,1,2,\dots,m\}$ such that $\ell<j$ and $\alpha_j^+$ is a coherent extension with an end in $\alpha_\ell$. 

Likewise,
there do not exist $j,\ell\in \{0,1,2,\dots,m\}$ such that $j<\ell$ and $\alpha_j^-$ is a coherent extension with an end in $\alpha_\ell$.
\end{claim}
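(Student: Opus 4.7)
Plan: I would argue by contradiction using the minimality of the weight of $A$. Suppose $\alpha_j^+$ is a coherent extension with $a_j^+\in\alpha_\ell$ and $\ell<j$. The idea is to use $\alpha_j^+$ as a shortcut to build a strictly shorter coherent spiral, namely
$$A^* \;=\; A[s,a_j^+]\;\cup\;\alpha_j^+(\mathrm{reversed})\;\cup\;A[\times_{j+1},t],$$
whose decomposition is $\alpha_0,\ldots,\alpha_{\ell-1},\,\alpha_\ell[\times_\ell,a_j^+],\,\alpha_j^+(\mathrm{rev}),\,\alpha_{j+1},\ldots,\alpha_m$ and whose weight is $m+\ell-j+1$. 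This is strictly smaller than $m$ whenever $\ell\le j-2$.

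The first main step will be to verify $A^*$ is a spiral. The inherited crossings $\times_i$ with $i\le\ell$ or $i\ge j+1$ lie on portions where $A$ and $A^*$ coincide locally, so they face the same side of $A^*$ as of $A$. At the two new junctions $a_j^+$ and $\times_{j+1}$ of $A^*$, coherence of the extension $\alpha_j^+$ places both its endpoints on the same (``top'') side of $A$; after excising $A[a_j^+,\times_{j+1}]$ and replacing it by $\alpha_j^+(\mathrm{rev})$, the two non-$A^*$ rays at each of $a_j^+$ and $\times_{j+1}$ lie on that same ``top'' side of $A^*$, so these junctions face the common side too.

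The second main step will be to verify $A^*$ is coherent. For each inherited segment, its coherent extension in $A$ supplies a coherent extension in $A^*$ after truncating, if necessary, at the first intersection with the closed curve $\alpha_j^+\cup A[a_j^+,\times_{j+1}]$, mirroring the truncation argument in the proof of Claim~1. For the two new segments $\alpha_\ell[\times_\ell,a_j^+]$ and $\alpha_j^+(\mathrm{rev})$, coherent extensions come directly from the coherence of $\alpha_\ell$ and of $\alpha_j^+$ in $A$.

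The main obstacle is the boundary case $\ell=j-1$, in which the weight of $A^*$ equals $m$ and so minimality gives no immediate contradiction. Here $\{\times_j,a_j^+\}=\gamma_{j-1}\cap\gamma_j$ by the arrangement-of-pseudocircles hypothesis, which is the essential extra piece of structure. I anticipate handling this subcase separately: either through a refined surgery exploiting the bigon bounded by $\alpha_{j-1}[a_j^+,\times_j]$ and $\alpha_j\cup\alpha_j^+$ (two sub-arcs of $\gamma_{j-1}$ and $\gamma_j$ between their two crossings) to strictly shorten $A$, or by observing that replacing $\alpha_j$ with $\alpha_j^+(\mathrm{rev})$ flips the side faced at $\times_{j+1}$, so $A^*$ fails to be a spiral and a different, genuinely shorter coherent spiral must be extracted. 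The ``likewise'' statement follows from the first part by reversing the traversal direction of $A$, which exchanges $+$ with $-$ and $<$ with $>$ in the indices.
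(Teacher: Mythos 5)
Your plan takes a genuinely different route from the paper, and its first main step does not go through: the rerouted arc $A^*$ is in general not a spiral. Since $A$ is a spiral, all of $\times_1,\dots,\times_m$ face one fixed side of $A$, say $L$; equivalently, at each crossing $A$ turns toward the opposite side $R$. A coherent extension $\alpha_j^+$ leaves $\times_{j+1}$ into side $L$ and, by coherence, returns to $A$ at $a_j^+$ from side $L$. Consequently, when $A^*$ departs from $A$ at $a_j^+$ it turns \emph{into} side $L$, i.e.\ it turns in the opposite rotational sense to the turns of $A$; a local check shows the two rays not used by $A^*$ at $a_j^+$ (the continuation of $\gamma_\ell$ along $A$, and the continuation of $\gamma_j$ beyond $a_j^+$, which lies on side $R$ of $A$, not $L$) lie on the side of $A^*$ \emph{opposite} to the side faced by the inherited crossings $\times_1,\dots,\times_\ell$. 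The same flip occurs at the other junction: $A^*$ arrives at $\times_{j+1}$ along $\alpha_j^+$ from side $L$ and exits along $\alpha_{j+1}$, so the leftover rays $\alpha_j$ and $\alpha_{j+1}^-$ are on the side of $A^*$ opposite to that faced by $\times_{j+2},\dots,\times_m$. You noticed exactly this flip in your boundary case $\ell=j-1$, but it occurs for every $\ell<j$ and at both junctions, so minimality of the weight of $A$ gives no contradiction. Note the contrast with the surgery in Claim \ref{claim1''}, where the reroute passes \emph{straight through} $\times_{j+1}$ and $\times_i$ along the same curves $\gamma_j$ and $\gamma_i$ (no new turn at the departure and return points), and only the single new crossing has to be checked. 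There are also secondary gaps: the $\ell=j-1$ case is left unresolved, and coherence of the new segment $\alpha_\ell[\times_\ell,a_j^+]$ is not ``direct,'' since the coherent extension of $\alpha_\ell$ for $A$ may have its far end in the excised portion of $A$ strictly between $a_j^+$ and $\times_{j+1}$, which is no longer part of $A^*$.

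The paper's proof of Claim \ref{claim2''} builds no new spiral at all. It chooses the \emph{least} $j$ admitting a backward coherent extension, first excludes $a_j^+\in\alpha_{j-1}$ by a separation argument with the simple closed curve contained in $\alpha_{j-1}\cup\alpha_j\cup\alpha_j^+$ (the two ends of $\alpha_{j-1}^+$ would be forced onto different sides of that curve, which $\alpha_{j-1}^+$ cannot cross), and then, for $\ell<j-1$, combines the minimality of $j$, part \ref{it:backwardChoice} of Claim \ref{claim1''}, and the pseudocircle hypothesis $|\gamma_{j-1}\cap\gamma_j|\le 2$ to reach a contradiction. If you want to salvage a surgery-style argument, you would need a reroute that, like the one in Claim \ref{claim1''}, does not introduce turns facing the wrong side---your present construction cannot do this.
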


\begin{proof}  We \mayseven{only prove the first statement.} 
Choose the least $j$ for which such an $\ell<j$ exists. Suppose first that $a^+_j\in \alpha_{j-1}$.    Thus, $\gamma_{j-1}\cap \gamma_j\supseteq\{a^+_j,\times_j\}$.  \mayseven{ Because $\alpha_j^+\ne \alpha_j^-$,  $a^+_j\ne \times_j$. \mayonefour{Thus,} {Hypothesis} \ref{it:arrpseudo} implies} $\gamma_{j-1}\cap \gamma_j= \{a^+_j,\times_j\}$.
Let $C$ be the simple closed curve \greenchange{contained in} $\alpha_{j-1}\cup \alpha_j\cup\alpha_j^+$.  
Except for $\times_j$, $\alpha_{j-1}^+$ is disjoint from $C$.  

\greenchange{Note that $a^+_{j-1}$ is not in $\alpha_{j-1}\cup \alpha_j$.  Therefore, $a^+_{j-1}$ and the start of $\alpha_{j-1}^+$ from $\times_j$} are on different sides of $C$, which is impossible.  Thus, $a_j^+\notin \alpha_{j-1}$.

\bluechange{The choice of $j$ implies that either $\alpha_{j-1}^+$ is not coherent or it does not intersect $A[a_j^+,\times_j]$.  Therefore, $\alpha_{j-1}^+$ must intersect $\alpha_j\cup \alpha_j^+$ at a point other than $\times_j$.  This gives the two crossings of $\gamma_{j-1}$ with $\gamma_j$.    
}

\bluechange{An intersection of $\alpha_{j-1}^-$ with $\gamma_j$ yields a third intersection of $\gamma_{j-1}$ with $\gamma_j$, and the theorem is proved.  Therefore, we assume $\alpha_{j-1}^-$ is disjoint from $\alpha^-_j$.   On the other hand, the choice of $j$ implies that, for each $k$ with $\times_{k+1}$ in the interior of $A[a^-_j,\times_j]$, $a_k^+\ne a_j^-$.  \decTwoOhTwoOh{Therefore   Claim \ref{claim1''} \ref{it:backwardChoice}} shows $\alpha_{j-1}^-$ is not disjoint from $\alpha^-_j$, the final contradiction.
}  
\end{proof}

We may now suppose that there does not exist an $\alpha_j^+$ that is a coherent extension with an end in any $\alpha_k$ such that $k<j$. Similarly, we may assume that there does not exist an $\alpha_j^-$ that is a coherent extension with any end in any $\alpha_k$ such that  $k>j$.

The final claim combines the first two to completely determine the nature of a coherent extension.  Before we get to it, we require one more detail.

\begin{claim}\label{cl:nonCoherence}
 Let $k\in\{1,2\dots,m\}$.  \redchange{Suppose $\alpha_{k-1}^-$ is not a coherent extension and that $\gamma_k\setminus \alpha_k^-$ has an intersection with $\gamma_{k-1}$.  \febSixteenChange{Then} $\alpha_{k}^-$ is not coherent.}

\redchange{Likewise, if  $\alpha_k^+$ is not coherent and $\gamma_{k-1}\setminus \alpha_{k-1}^+$ has an intersection with $\gamma_k$, then $\alpha_{k-1}^+$ is not coherent.}
\end{claim}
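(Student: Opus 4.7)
The plan is to prove the first statement; the second follows by the symmetric argument after reversing the traversal of $A$ and invoking the forward parts (1)--(2) of Claim~\ref{claim1''} in place of (3)--(4). I argue by contradiction, assuming $\alpha_k^-$ is a coherent extension in addition to the standing hypotheses of the claim. Let $p\in(\gamma_k\setminus\alpha_k^-)\cap\gamma_{k-1}$ be the point supplied by the second hypothesis; since $\times_k\in\alpha_k^-$, we have $p\ne\times_k$, so $\{\times_k,p\}\subseteq\gamma_{k-1}\cap\gamma_k$. By \ref{it:arrpseudo} this set has cardinality exactly two, and my goal reduces to exhibiting a third point of $\gamma_{k-1}\cap\gamma_k$.

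Since $\alpha_k^-$ is a coherent extension, $\alpha_k$ is internal and the endpoint $a_k^-$ lies on the common (spiral) side of $A$. By Claim~\ref{claim2''}, $a_k^-$ must lie in some $\alpha_\ell$ with $\ell\le k-1$. In the sub-case $\ell=k-1$, the point $a_k^-\in\alpha_{k-1}\subseteq\gamma_{k-1}$ is an endpoint of the component $\alpha_k^-$ of $\gamma_k\setminus A$, and as all intersections in $\Gamma$ are transversal crossings, $a_k^-$ lies in $\gamma_{k-1}\cap\gamma_k$. It is distinct from $\times_k$ (the two points are the two endpoints of $\alpha_k^-$) and distinct from $p$ (because $a_k^-\in\alpha_k^-$ while $p\notin\alpha_k^-$), giving the required third point.

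In the sub-case $\ell\le k-2$, apply part~(4) of the ``likewise'' portion of Claim~\ref{claim1''} to $\alpha_k^-$. The two possibilities are: (b) $\alpha_{k-1}^-$ meets $\alpha_k^-$, or (a) there exists $i\in\{\ell,\ldots,k-2\}$ with $a_k^-\in\alpha_i^+$. In (b), any such meeting point is an intersection of $\gamma_{k-1}$ and $\gamma_k$ lying in $\alpha_k^-$; once the degenerate possibility ($\alpha_{k-1}$ external, so that $\alpha_{k-1}^-$ already shares the endpoint $\times_k$ with $\alpha_k^-$) is handled separately by using the coherence of $\alpha_k^-$ to show $\gamma_{k-1}$ must re-enter $\alpha_k^-$ at an interior point, the meeting point is distinct from both $\times_k$ and $p$, producing the third intersection.

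Case~(a) is the main obstacle, and is where the non-coherence of $\alpha_{k-1}^-$ is used essentially. Here $\alpha_i^+$ has its terminal endpoint at $a_k^-$ on the common side of $A$, so $\alpha_i^+$ is itself a coherent extension. I would form the simple closed curve $C$ obtained from $\alpha_k^-\cup\alpha_i^+\cup A[\times_{i+1},\times_k]$ (analogous to the curve used in the proof of Claim~\ref{claim2''}), which bounds a region of the sphere containing $\alpha_{k-1}$ on its frontier. Because $\alpha_{k-1}^-$ fails to be a coherent extension, $\gamma_{k-1}$ must leave the common side of $A$ near $\times_{k-1}$ and thus cross $C$ at a point beyond $\times_k$; tracking which of the three pieces of $C$ the curve $\gamma_{k-1}$ meets yields either a third intersection of $\gamma_{k-1}$ with $\gamma_k$---contradicting \ref{it:arrpseudo}---or a new configuration that, via the modification technique of Claim~\ref{claim2''}, produces a coherent spiral of weight less than $m$, contradicting the minimality of $A$.
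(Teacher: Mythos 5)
Your opening parallels the paper and is fine: assuming $\alpha_k^-$ is a coherent extension, the hypothesis supplies a second point of $\gamma_{k-1}\cap\gamma_k$, \ref{it:arrpseudo} caps the count at two, Claim \ref{claim2''} pushes $a_k^-$ backwards, and the sub-case $a_k^-\in\alpha_{k-1}$, as well as alternative (b) of Claim \ref{claim1''} (apart from the degenerate situation $a_{k-1}^-=\times_k$, which you only gesture at), yields the third intersection directly. The genuine gap is your case (a), which is precisely where the hypothesis that $\alpha_{k-1}^-$ is not coherent has to do its work, and which you leave as a plan rather than an argument. Two of its assertions do not hold up. First, ``$\alpha_i^+$ is itself a coherent extension'' does not follow: the fact that $\alpha_k^-$ reaches the common point $a_k^-=a_i^+$ from the side faced by the crossings says nothing about the side from which $\alpha_i^+$ arrives there, nor that $\alpha_i$ is internal; indeed, since $a_i^+$ lies in $\alpha_\ell$ with $\ell<i$ (or equals $\times_i$ when $i=\ell$), Claim \ref{claim2''} forces $\alpha_i^+$ \emph{not} to be a coherent extension, so the premise of your construction is false whenever case (a) occurs. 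Second, ``because $\alpha_{k-1}^-$ fails to be a coherent extension, $\gamma_{k-1}$ \dots must cross $C$'' is unjustified: $\alpha_{k-1}^-$ meets $A$ only at its two ends, so it can stay on one side of your curve $C=\alpha_k^-\cup\alpha_i^+\cup A[\times_{i+1},\times_k]$, for instance ending at $\times_k\in C$ when $\alpha_{k-1}$ is external, or ending in $A[s,a_k^-)$ from the non-faced side when $\alpha_i^+$ arrives at $a_k^-$ from the non-faced side and that stretch of $A$ lies inside the region you use; non-coherence constrains how $\alpha_{k-1}^-$ lands on $A$, not whether it crosses $C$. Finally, the fallback ``\dots or a new configuration that, via the modification technique of Claim \ref{claim2''}, produces a coherent spiral of weight less than $m$'' names no construction (the rerouting device is in the proof of Claim \ref{claim1''}, not Claim \ref{claim2''}), so in case (a) no contradiction is ever actually derived.

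For comparison, the paper does not case-split here at all. It forms the simple closed curve $C=\alpha_{k-1}^-\cup A[\times_{k-1},a_{k-1}^-]$ and traces an auxiliary arc $\delta$ from a point of $\alpha_k^-$ near $\times_k$, alongside $\alpha_{k-1}$, across $\alpha_{k-1}^-$, and then beside $A$ to a point near $\times_0$, all on the side of $A$ faced by the crossings. The non-coherence of $\alpha_{k-1}^-$ is used exactly once, to guarantee that $\delta$ crosses $\alpha_{k-1}^-$ (hence $C$) exactly once, so the two ends of $\delta$ lie on opposite sides of $C$; since $\alpha_k^-$ cannot cross $C$ (it avoids $\gamma_{k-1}\setminus\{\times_k\}$ and meets $A$ only at its ends), it cannot reach the faced side of $\alpha_0\alpha_1\cdots\alpha_{k-2}$, and being also disjoint from $\alpha_{k-1}\setminus\{\times_k\}$, its end is forced into $A[\times_{k+1},t]$, contradicting Claim \ref{claim2''}. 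The content of that $\delta$-argument is what your case (a) would have to reconstruct; as written, it is a gap, not a proof.
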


\begin{proof}  We \mayseven{only prove the first statement.} 
Because $A$ is coherent and $\alpha_{k-1}^-$ is not a coherent extension, $\alpha_{k-1}^+$ is a coherent extension of $\alpha_{k-1}$.  \greenchange{Let $\times$ be the intersection of $\gamma_k\setminus \alpha^-_k$ with $\gamma_{k-1}$; it follows that $\gamma_{k-1}\cap \gamma_k\supseteq\{\times_k,\times\}$.  Since $\times\notin\alpha_k^-$,  $\times$ is neither $\times_k$ nor $a^-_k$.  
\mayseven{{Hypothesis} \ref{it:arrpseudo} implies}
$\gamma_{k-1}\cap \gamma_k=\{\times_k,\times\}$.   
}

 \redchange{\greenchange{Since $\alpha_k^-\setminus \{\times_k\}$ is disjoint from $\gamma_{k-1}$;  in particular, it is disjoint from $\alpha_{k-1}$.}   For $k=1$, the preceding claim  implies that  $\alpha_k^-$ is not coherent.  Thus, we suppose $k\ge 2$.}

  The union of $\alpha_{k-1}^-$ and \mayseven{ $A[\times_{k-1},a_{k-1}^-]$}   is a simple closed curve $C$.  Let $p$ be a point of $\alpha_k^-$ near $\times_k$.  From $p$ trace an arc $\delta$ alongside $\alpha_{k-1}$, across $\alpha_{k-1}^-$ and, continuing beside $A$, on to a point near the end \redchange {$\times_0$ of $A$.  Thus, $\delta$ is along the side of $A$ faced by all the $\times _i$.  Because $\alpha_{k-1}^-$ is not coherent, it does not return to $A$ on this side and, therefore, $\delta$ crosses $\alpha_{k-1}^-$ only once.  Consequently, $\delta$ crosses $C$ only once.}   

Suppose by way of contradiction that $\alpha_k^-$ is a coherent extension of $\alpha_{k}$. Because $\alpha^-_k$ does not intersect $\gamma_{k-1}\setminus \{\times_k\}$, it cannot cross $C$.  Therefore, it does not intersect the portion of $\delta$ from its crossing \bluechange{with $\alpha_{k-1}^-$ to its end near $\times_0$.  In particular, } $\alpha_k^-$ has no end in $\alpha_0\alpha_1\cdots\alpha_{k-2}$.  The first paragraph shows $\alpha_k^-$ is also disjoint from $\alpha_{k-1}\setminus\{\times_k\}$.  Thus, \mayonefour{$a_k^-$ is in $A[\times_{k+1},t]$},  contradicting the preceding claim. \end{proof}

We are now ready to give a complete description of the coherent extensions of $A$.

\begin{claim} \label{claim3''}
For $0 \leq j \leq m-1$, if $\alpha_j^+$ is a coherent extension of $\alpha_j$, then $a_j^+\in\alpha_{j+1}$. 

Likewise, for $1 \leq j \leq m$, if $\alpha_j^-$ is  a coherent extension of $\alpha_j$, then $a_j^-\in \alpha_{j-1}$.
\end{claim}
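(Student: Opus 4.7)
We prove only the first statement; the second follows by traversing $A$ in the reverse direction. The plan is to contradict the minimality of the weight of the coherent spiral $A$, continuing in the style of the proofs of Claims~\ref{claim1''} and \ref{claim2''}.

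First I would set up the contradiction. Suppose $\alpha_j^+$ is a coherent extension of $\alpha_j$ but $a_j^+ \notin \alpha_{j+1}$. By Claim~\ref{claim2''}, $a_j^+ \notin \alpha_k$ for any $k<j$. A short local argument using that $\alpha_j^+$ is a component of $\gamma_j\setminus A$ rules out $a_j^+ \in \alpha_j$: if $a_j^+$ were an interior point of $\alpha_j$, then $\alpha_j^+$ together with the sub-arc $\alpha_j[\times_{j+1},a_j^+]$ would cover all of $\gamma_j$, omitting $\alpha_j[\times_j,a_j^+)\subseteq \gamma_j$; and $a_j^+=\times_j$ puts $a_j^+$ into $\alpha_{j-1}$, again contradicting Claim~\ref{claim2''}. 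Hence $a_j^+\in\alpha_\ell$ for some $\ell\ge j+2$.

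Next I would construct a shorter coherent spiral $A'$ to contradict minimality. Define $A'=A[s,\times_{j+1}]\cup\alpha_j^+\cup A[a_j^+,t]$. Since $\alpha_j$ and $\alpha_j^+$ both lie on $\gamma_j$, they combine into a single segment $\tilde\alpha_j=\alpha_j\cup\alpha_j^+\subseteq\gamma_j$; the decomposition of $A'$ is thus $\alpha_0\cdots\alpha_{j-1}\,\tilde\alpha_j\,\alpha_\ell^*\,\alpha_{\ell+1}\cdots\alpha_m$, where $\alpha_\ell^*=\alpha_\ell[a_j^+,\times_{\ell+1}]\subseteq\gamma_\ell$. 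Because $\alpha_j^+$ is a component of $\gamma_j\setminus A$, $A'$ is simple, and its weight is $m+j-\ell+1\le m-1$.

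The heart of the proof is verifying that $A'$ is a coherent spiral. For the spiral property, the crossings inherited from $A$ (that is, $\times_1,\dots,\times_j$ and $\times_{\ell+1},\dots,\times_m$) continue to face the original spiral side $\sigma$ locally; and because $\alpha_j^+$ is a coherent extension, both of its ends meet $A$ on side $\sigma$, so the local $\sigma$-side of $A[s,\times_{j+1}]$ and of $A[a_j^+,t]$ lie in a single side of $A'$, making the spiral side of $A'$ globally consistent. The new turning point at $a_j^+$ (between $\tilde\alpha_j$ and $\alpha_\ell^*$) also faces this side because $\alpha_j^+$ approaches $a_j^+$ from $\sigma$ while $A'$ continues along $\alpha_\ell$ in the original $A$-direction. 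For coherence, segments inherited from $A$ keep the coherent extensions they had in $A$; here Claim~\ref{claim1''}(1) is precisely what ensures that the $\alpha_i^-$ for $i\in\{j+2,\dots,\ell\}$ do not hit $\alpha_j^+$, so those extensions remain interior to $A'$. For the two new segments $\tilde\alpha_j$ and $\alpha_\ell^*$, the coherent extensions are produced using Claim~\ref{claim1''}(2) and Claim~\ref{cl:nonCoherence}.

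The main obstacle is this last step: producing coherent extensions for $\tilde\alpha_j$ and $\alpha_\ell^*$ in $A'$, where one must split according to the two alternatives of Claim~\ref{claim1''}(2) — (a) $\alpha_{j+1}^+\cap\alpha_j^+\ne\emptyset$, and (b) $a_j^+\in\alpha_i^-$ for some $i\in\{j+2,\dots,\ell\}$ — and in each case use the pseudocircle hypothesis (Hypothesis~\ref{it:arrpseudo}) together with Claim~\ref{cl:nonCoherence} to locate a suitable extension that both starts and ends on one side of $A'$. Once coherence is established, $A'$ is a coherent spiral of weight strictly less than $m$, which contradicts the minimality of $A$ and forces $\ell=j+1$.
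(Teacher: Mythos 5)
Your overall strategy---splice $\alpha_j^+$ into $A$ to manufacture a coherent spiral of smaller weight---is not the paper's argument (the paper fixes the \emph{least} bad index $j$, shows by induction with Remark~\ref{rk:alpha0-NotCoherent} and Claim~\ref{cl:nonCoherence} that $\alpha_k^-$ is incoherent for all $k\le j$, then uses Claim~\ref{claim1''}\ref{it:j+1HatPlus} and Claim~\ref{claim2''} to force $\alpha_{j+1}^+$ to meet $\alpha_j^+$, concludes via Claim~\ref{cl:nonCoherence} that $\alpha_{j+1}^+$ is coherent, and finally contradicts Claim~\ref{claim1''} applied to $\alpha_{j+1}^+$; no new spiral is built at this stage). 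More importantly, your construction has a genuine gap: the arc $A'=A[s,\times_{j+1}]\cup\alpha_j^+\cup A[a_j^+,t]$ is in general \emph{not} a spiral. Since $A$ is a spiral with all crossings facing a side $\sigma$, the extension $\alpha_j^+$ leaves $\times_{j+1}$ into $\sigma$ and, being coherent, returns to $A$ at $a_j^+$ from the $\sigma$ side. Hence at the new transition point $a_j^+$ of $A'$ (from $\gamma_j$ to $\gamma_\ell$), the two continuations not used by $A'$---the branch of $\gamma_j$ beyond $a_j^+$ and the piece $\alpha_\ell[\times_\ell,a_j^+]$ of the old $A$---lie on the bank of $A'$ \emph{opposite} to the one locally inherited from $\sigma$, while every crossing $\times_1,\dots,\times_j,\times_{\ell+1},\dots,\times_m$ of $A'$ still faces the $\sigma$-bank. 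So the crossings of $A'$ do not all face one side, and the minimality contradiction cannot be drawn. This is precisely why the construction in the proof of Claim~\ref{claim1''} does not follow $\alpha_j^+$ to its far end: it follows $\alpha_j^+$ only to its first meeting with some backward extension $\alpha_i^-$ and then returns to $A$ along $\alpha_i^-$, which is what makes the single new crossing face the correct side.

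There is a secondary problem with your coherence bookkeeping even where the spiral property is not at issue. Claim~\ref{claim1''}\ref{it:minusExtensions} concerns the extensions $\alpha_i^-$ for $i\in\{j+2,\dots,\ell\}$, i.e.\ extensions of segments that you have \emph{deleted} from $A'$; it says nothing about the surviving segments $\alpha_0,\dots,\alpha_{j-1},\alpha_{\ell+1},\dots,\alpha_m$. For those, a coherent extension with respect to $A$ may terminate in the removed portion $A(\times_{j+1},a_j^+)$, in which case it is not even an arc with both ends on $A'$ (the corresponding component of $\gamma_k\setminus A'$ continues past that point), so coherence with respect to $A'$ does not transfer automatically; the analogous verification in Claim~\ref{claim1''} needs the separate argument with the simple closed curve $C\subseteq A\cup\alpha_j^+\cup\alpha_i^-$. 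To repair your route you would essentially have to redo that analysis, at which point you are reproving Claim~\ref{claim1''} rather than deducing the present claim from it; the paper's combinatorial derivation from Claims~\ref{claim1''}--\ref{cl:nonCoherence} avoids both difficulties.
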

\begin{proof}  We \mayseven{only prove the first statement.} 
 Suppose that $j$ is the least index such that $\alpha_j^+$ is a coherent extension of $\alpha_j$ and $a_j^+\notin\alpha_{j+1}$. 
We show by induction that, for each $k=0,1,\dots,j$,  $\alpha_k^-$ is not a coherent extension of $\alpha_k$. 
For $k=0$, Remark \ref{rk:alpha0-NotCoherent} shows that  $\alpha_0^-$ is not a coherent extension of $\alpha_0$. Now let $k\ge 1$ and suppose that $\alpha_{k-1}^-$ is not a coherent extension of $\alpha_{k-1}$.

Since $A$ is coherent and $\alpha_{k-1}^-$ is not a coherent extension of $\alpha_{k-1}$, we have that  $\alpha_{k-1}^+$ is a coherent extension of $\alpha_{k-1}$.  By the choice of $j$, $a^+_{k-1}\in \alpha_k$.  The preceding claim implies that $\alpha_k^-$ is not a coherent extension of $\alpha_k$\marchTwentysix{, completing the proof that,} for each $k=0,1,\dots,j$, $\alpha_k^-$ is not a coherent extension of $\alpha_k$.

\redchange{Let $\ell$ be such that \greenchange{$a_j^+\in\alpha_\ell\setminus\{\times_\ell\}$}.     The second claim and the choice of $j$ \febSixteenChange{show that  $\ell>j+1$}.  Part \ref{it:minusExtensions} of the first claim implies that, for each $i=j+2,j+3,\dots,\ell$, $\alpha_i^-$ is disjoint from $\alpha_j^+\setminus\{a^+_j\}$, while the second claim asserts that $a^+_j\notin \alpha^-_i$.  Part \ref{it:j+1HatPlus} of the first claim now implies that $\alpha^+_{j+1}$ intersects $\alpha^+_j$ at a point $q$.}

We showed that $\alpha_j^-$ is not a coherent extension of $\alpha_j$ and that $\alpha^+_{j+1}$ intersects $\gamma_j$ at $\times_{j+1}$ and $q$.  Consequently, \marchTwentysix{the coherence of $A$ and} Claim \ref{cl:nonCoherence} show that $\alpha_{j+1}^+$ is a coherent extension.  The second claim shows that, for some $r>j+1$, \greenchange{$a_{j+1}^+\in\alpha_r\setminus \{\times_r\}$}.   The first two claims  show that, for each \greenchange{$i=j+3,j+4,\dots,r$, $\alpha^-_i$} is disjoint from $\alpha^+_{j+1}$.

Let $\alpha_{\ell}^*$ be the subarc $\alpha_\ell[\times_\ell,a^+_j]$ and let $A'$ be the arc consisting of $\alpha_{j+1}$, $\alpha_{j+2},\dots,$ $\alpha_{\ell-1},\alpha_\ell^*$.   Just above, we showed that $\alpha_{j+2}^-$ is disjoint from $\alpha_j^+$.  Therefore, $\alpha^-_{j+2}$ is a coherent extension of $\alpha_{j+2}$ with respect to  $A'$. The second claim shows that $\alpha_{j+2}^-$ has both ends in $\alpha_{j+1}$. 

Since $\alpha_{j+2}^-$ intersects $\alpha_{j+1}$ in $\times_{j+2}$ and $a_{j+2}^-$, \febSixteenChange{we see that $a^+_{j+1}\notin \alpha_{j+2}$; therefore} $r>j+2$. Moreover, it follows that $\alpha_{j+2}^+$ is disjoint from $\alpha_{j+1}^+$. This, together with the fact that, for each \greenchange{$i=j+3,j+4,\dots,r$}, $\alpha^-_i$ is disjoint from $\alpha^+_{j+1}$, contradicts the first claim. 
\end{proof}

\marchTwentysix{Because $A$ is coherent and Remark \ref{rk:alpha0-NotCoherent} shows $\alpha_0^-$ is not a coherent extension of $\alpha_0$,}  $\alpha_0^+$ is a coherent extension of $\alpha_0$\marchTwentysix{.  Likewise,} $\alpha_m^-$ is a coherent extension of $\alpha_m$.  It follows that there is a $j\ge 1$ such that $\alpha_{j-1}^+$ is a coherent extension of $\alpha_{j-1}$ and $\alpha_j^-$ is a coherent extension of $\alpha_j$.  The fourth claim  implies that both ends of $\alpha_{j-1}^+$ are in $\alpha_j$ and both ends of $\alpha_j^-$ are in $\alpha_{j-1}$.  This implies that $|\gamma_{j-1}\cap \gamma_j|\ge 3$, \mayseven{completing the proof that \ref{it:arrpseudo}${}\Rightarrow{}$\ref{it:noCoherSpiral}}.

\mayseven{\decTwoOhTwoOh{\ref{it:noCoherSpiral}}${}\Rightarrow{}$\ref{it:SpiralsAllExternal}.  For this argument, we will make use of the following trivial \decTwoOhTwoOh{observation}.}  

\begin{observation}\label{obs:trivial}
Let $Q,R,S$ be arcs in the sphere such that $R$ and $S$ both have their ends in $Q$, but otherwise \mayoneseven{are disjoint from $Q$}.  We assume $R$ and $S$ have finitely many intersections and these are all crossings.

\mayonefour{Assume that short subarcs of $S$ starting at each end of $S$} are on different sides of the unique simple closed curve in $R\cup Q$.  Then $(R\cap S)\setminus Q$ has at least one point.  
\end{observation}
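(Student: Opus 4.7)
\textbf{Proof proposal for Observation \ref{obs:trivial}.} The plan is to show that $R \cap S$ is itself already disjoint from $Q$, so that the claim reduces to proving $R \cap S \neq \emptyset$; this is then settled by a connectedness argument using the simple closed curve built from $R$ and a subarc of $Q$.

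First I would set up notation. Since $R$ has both ends in $Q$ and is otherwise disjoint from $Q$, its two ends split $Q$ into (at most) three subarcs; let $Q_R$ denote the one whose closure joins the two ends of $R$. Then $C := R \cup Q_R$ is the unique simple closed curve contained in $R \cup Q$, and it divides the sphere into two open disks $D_1$ and $D_2$. The hypothesis is that a short initial subarc of $S$ at one end lies in $D_1 \cup C$ and a short initial subarc of $S$ at its other end lies in $D_2 \cup C$; by choosing these subarcs short enough, their relative interiors can be assumed disjoint from $C$ and hence contained, respectively, in $D_1$ and $D_2$.

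Next I would observe that every point of $R \cap S$ is a crossing, so it lies in the interior of both $R$ and $S$. Since the interior of $R$ is disjoint from $Q$, this gives $(R \cap S) \cap Q = \emptyset$, and hence $(R \cap S) \setminus Q = R \cap S$. Thus it suffices to prove $R \cap S \neq \emptyset$.

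Suppose for contradiction that $R \cap S = \emptyset$. Let $S^\circ$ be $S$ minus its two endpoints. By the hypotheses on $S$, $S^\circ$ is disjoint from $Q$, and in particular from $Q_R$; by our assumption, $S^\circ$ is also disjoint from $R$. Therefore $S^\circ \cap C = \emptyset$. Because $S^\circ$ is connected (an open arc), it lies entirely in one of the two open disks $D_1, D_2$. But the relative interiors of the two short subarcs at the ends of $S$ are contained in $S^\circ$ and, by hypothesis, lie in different open disks. This contradiction proves $R \cap S \neq \emptyset$, completing the argument.

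I expect no real obstacle here: the content is a standard Jordan-curve connectedness argument, and the only subtlety is to verify at the outset that the intersections of $R$ and $S$, being crossings, are automatically interior points of both arcs and hence automatically avoid $Q$.
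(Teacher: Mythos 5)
Your proof is correct. The paper states this as a ``trivial observation'' and supplies no proof of its own; your argument---first noting that crossings are interior points of both arcs and hence automatically off $Q$, then applying the Jordan curve theorem to $C=R\cup Q_R$ and the connectedness of the open arc $S^{\circ}$---is exactly the intended justification.
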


\mayseven{
\octthirteen{By way of contradiction,} 
let $A$ be {a least-weight spiral in $P(\Gamma)$} having no segment external for $A$.}   Since the only segment in an arc of weight 0 is external for that arc, $A$ has positive weight $m$.  Let $\alpha_0\alpha_1\cdots\alpha_m$ be the decomposition of $A$.

\decTwoOhTwoOh{By assumption,} $A$ is incoherent; let $\alpha_i$ be an incoherent segment of $A$.  
By definition, both $\alpha_i^-$ and $\alpha^+_i$ are both incoherent extensions.  Thus, \mayonefour{short subarcs near their ends $a_i^-$ and $a_i^+$} are on the side of $A$ that is opposite the side faced by all the crossings $\times_1,\dots,\times_m$.  We remark that one or both of $a_i^-$ and $a_i^+$ might be in $\{\times_0,\times_{m+1}\}$.  If $i=0$, then only \mayonefour{the subarc near} $a_0^+$ is forced by incoherence to be on the side of $A$ not faced by the crossings; the fact that $\gamma_0$ is a simple closed curve implies $a_0^-$ is as well.  An analogous statement applies if $i=m$.

As we traverse $A$ from $\times_0$ to $\times_{m+1}$, we first encounter $a_i^-$ and then $a_i^+$ (these could be equal).   Therefore, either $a_i^-\in \alpha_0\alpha_1\cdots\alpha_{i-1}$ or $a_i^+\in \alpha_{i+1}\alpha_{i+2}\cdots\alpha_m$.  As these are symmetric up to reversal of $A$, we assume the latter.

Let $B$ be the spiral $\alpha_{i+1}\alpha_{i+2}\cdots\alpha_m$; evidently, its weight is less than that of $A$.  \mayseven{Therefore, $B$ has a segment $\alpha_j$ that is external for $B$}.
\maysix{Notice that the side of $B$ faced by all its crossings is separated from $A[\times_0,\times_{i+1}]\setminus\{\times_{i+1}\}$ by the simple closed curve $A[\times_{i+1},a_i^+]\cup \alpha_i^+$.}

Since \mayseven{no segment of $A$ is external for $A$}, $\gamma_j$ intersects $A$ at a point outside~$\alpha_j$.  This implies that $\alpha_j^-$ and $\alpha_j^+$ intersect $A$ at points $a_j^-$ and $a_j^+$, respectively, that are not in $\alpha_j$.    Since $\alpha_j$ is external for $B$, $a_j^-$ and $a_j^+$ are in~$A\setminus B$.  

\begin{claim}  Both $\alpha_j^-$ and $\alpha_j^+$ have an intersection with $\alpha_i^+$, and these intersections are distinct points of~$\alpha_i^+$.
\end{claim}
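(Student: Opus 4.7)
The plan is to exploit the simple closed curve $C_i := A[\times_{i+1}, a_i^+]\cup \alpha_i^+$, which by the preceding paragraph separates the region $X$ containing $A[\times_0,\times_{i+1}]\setminus\{\times_{i+1}\}$ from the region $Y$ containing the crossings-side of $B$.  The idea is that each of $\alpha_j^-$ and $\alpha_j^+$ starts on the $Y$-side of $C_i$ (at $\times_j$ or $\times_{j+1}$, on the face/crossings-side of the spiral), but must reach an endpoint $a_j^\pm\in A\setminus B$, which lies in $X$.  Hence each extension must cross $C_i$, and because its interior is disjoint from $A\supseteq A[\times_{i+1},a_i^+]$, any such crossing can occur only with $\alpha_i^+$.

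The concrete tool will be Observation~\ref{obs:trivial} applied with $Q=A$, $R=\alpha_i^+$, and $S\in\{\alpha_j^-,\alpha_j^+\}$.  Before invoking it, I would first check that $\gamma_i\neq\gamma_j$, so that $R\cap S$ consists of finitely many crossings: if $\gamma_i=\gamma_j$, then $\times_{i+1}\in\gamma_j\cap B$, and externality of $\alpha_j$ for $B$ forces $\times_{i+1}\in\alpha_j$, possible only when $j=i+1$, contradicting the decomposition requirement $\gamma_i\neq\gamma_{i+1}$.  For the ``different sides'' hypothesis, the short subarc of $S$ at its $\times_j$ (respectively $\times_{j+1}$) end lies in $\overline{Y}$ because $A$ is a spiral and the face-side at a crossing of $B$ is contained in the crossings-side of $B$; the short subarc at $a_j^\pm$ lies in $X$ because $a_j^\pm\in A\setminus B$ is disjoint from $C_i$ (as $C_i$ meets $A$ only in $A[\times_{i+1},a_i^+]\subseteq B$), so a full neighborhood of $a_j^\pm$ lies in $X$.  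The observation then delivers a point of $(\alpha_i^+\cap S)\setminus A$, the required interior intersection.

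The two intersection points produced lie in the interiors of $\alpha_j^-$ and $\alpha_j^+$ respectively (off $A$), and these interiors are disjoint open components of $\gamma_j\setminus A$, so the two points are distinct on $\alpha_i^+$.  The main obstacle I expect is the boundary configuration $j=i+1$, in which $\times_j=\times_{i+1}$ lies on $C_i$ itself and the short-subarc criterion for $\alpha_{i+1}^-$ becomes delicate; fortunately, in that case $\alpha_{i+1}^-$ and $\alpha_i^+$ already share the endpoint $\times_{i+1}$, the observation still produces an interior intersection of $\alpha_{i+1}^+$ with $\alpha_i^+$, and distinctness is immediate because one of the two intersection points lies on $A$ while the other does not.
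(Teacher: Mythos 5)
Your proposal is essentially the paper's own argument: the same separating curve $A[\times_{i+1},a_i^+]\cup\alpha_i^+$ and the same application of Observation \ref{obs:trivial} with $Q=A$, $R=\alpha_i^+$, and $S\in\{\alpha_j^-,\alpha_j^+\}$, with the added (correct, and welcome) check that $\gamma_i\ne\gamma_j$, and the $j=i+1$ boundary case resolved exactly as in the paper via the shared endpoint $\times_{i+1}$.

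The one point you do not cover is the second extreme case, which the paper treats separately: $j=m$, where the end $\times_{m+1}$ of $\alpha_m^+$ is the endpoint $t$ of $A$ rather than a crossing of $B$, so your justification that the short subarc at the ``crossing end'' lies on the face-side of $B$ does not literally apply; in the sub-case $a_i^+=\times_{m+1}$ that end even lies on the separating curve itself, so the two-sides hypothesis of Observation \ref{obs:trivial} fails for $S=\alpha_m^+$ just as it does for $\alpha_{i+1}^-$ in the case you flagged. The repair is the same one you already used: there $\times_{m+1}=a_i^+$ is itself a common point of $\alpha_m^+$ and $\alpha_i^+$, and distinctness again follows because the other intersection point lies off $A$. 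With that case added, your argument matches the paper's proof.
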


\begin{proof}
In an extreme case, $j=i+1$; here $\times_{i+1}$ is the required common point between $\alpha_j^-$ and $\alpha_i^+$.  In the other extreme case,  $a_i^+ = \times_{m+1}$ and $j=m$; here $\times_{m+1}$ is the common point between $\alpha_j^+$ and $\alpha_i^+$.  (These could happen simultaneously.)
Otherwise, $\times_j$ and $\times_{j+1}$ are interior points of $B$.

If we take $Q=A$, $R=\alpha_i^+$ and {$S$ to be either $\alpha_j^-$ or $\alpha_j^+$}, then {Observation \ref{obs:trivial}} {implies there are distinct points, one in each of $(\alpha_i^+\cap \alpha_j^-)\setminus A$ and $(\alpha_i^+\cap \alpha_j^+)\setminus A$.} 
\end{proof}

\mayoneone{Since $|\gamma_i\cap \gamma_j|\le 2$, the only points} in $\gamma_i\cap \gamma_j$ are the points in each of $\alpha_j^-\cap\alpha_i^+$ and $\alpha_j^+\cap \alpha_i^+$.   In particular, $\alpha_j^-$ and $\alpha_j^+$ are both disjoint \mayonefour{from $\alpha_i^-\setminus \{a_i^-\}$.  
}  

\mayonefour{Because $a_j^-\notin B$ and $\alpha_j^-$ is disjoint from $\alpha_i^-\setminus\{a_i^-\}$,  $a_j^-\in A[\times_0,\times_i]$.  On the other hand,  the disjointness of $\alpha_j^-$ with $\alpha_i^-\setminus\{a_i^-\}$ implies that $a_i^-$ is not further from $\times_0$ in $A$ than $a^-_j$ is.  In turn, this implies that $a_i^-$ is in $A[\times_0,\times_i]\setminus\{\times_i\}$.  Therefore, reversing the direction of traversal of $A$,  $\alpha_i^-$ \maytwonine{may play the role of $\alpha_i^+$ in the preceding argument}. } 

Thus, there is a $k\in \{0,1,\dots,i-1\}$ such that $\alpha_k$ is a \mayseven{segment of $\alpha_0\alpha_1\cdots\alpha_{i-1}$, external for $\alpha_0\alpha_1\cdots\alpha_{i-1}$, but not external for $A$.}  The argument above for $\alpha_i^+$ and $\alpha_j$ applies to $\alpha_i^-$ and $\alpha_k$.  Thus, $\alpha_k^-$ and $\alpha_k^+$ have their endpoints \maysix{$a_k^-$ and $a_k^+$, respectively, \mayoneone{in $A[\times_{i+1},\times_{m+1}]\setminus\{\times_{i+1}\}$}}.  \decTwoOhTwoOh{Observation \ref{obs:trivial}} implies that each of $\alpha_j^-$ and $\alpha_j^+$ has an intersection with each of $\alpha_k^-$ and $\alpha_k^+$.  In particular, $\gamma_j\cap \gamma_k$ has at least four points.  \decTwoOhTwoOh{This contradicts \ref{it:arrpseudo}, which is equivalent to \ref{it:noCoherSpiral}}.
\end{cproofof}

\section{A pseudocircle transversal}\label{sec:transversal}

\janone{We recall that the proof of Theorem \ref{th:main} has two parts.  For our current set $\Gamma$ of pairwise intersecting pseudocircles, we must (i) find \janseven{a pair of initial approximations} to the pseudocircle for the next edge and (ii) \janseven{show that the pair of approximations imply the existence of \janthirteen{the desired curve crossing all the curves in $\Gamma$}}.  In this section, our focus is on the second of these \decTwoOhTwoOh{parts}.}

\janone{In particular, in Section \ref{sec:hCxHasExactExtension}, we will show \janseven{how to find two initial approximations that together} intersect all the curves in our current $\Gamma$.  The main result of this section is to use these two approximations to find the single curve that intersects all the curves in $\Gamma$.  }

\janthirteen{We are reminded of the theorems that are: 
\begin{itemize}[topsep=0pt, itemsep=0pt,leftmargin=11pt]\item Helly-type:  if a collection of sets is such that every $k$ of the sets admits a transversal, then the whole collection admits a transversal; and
\item Gallai-type: if a collection of sets is such that every $k$ of the sets admits a transversal, then the whole admits a small set of partial transversals whose union is a transversal. 
\end{itemize} 
Our theorem has the following different character:
if a collection of sets admits a small set of partial transversals whose union is a transversal, then it admits a transversal.  
We do not know \decTwoOhTwoOh{of another} example of this type of theorem.}

\begin{definition}
\janone{Let $\Gamma$ be an arrangement of pseudocircles.  
\begin{itemize} \item A set $\Lambda$ of simple closed curves is a {\em $\Gamma$-transversal\/} if every curve in $\Gamma$ intersects at least one of the curves in $\Lambda$. \item A simple closed curve $\gamma$ is a {\em $\Gamma$-pseudocircle\/} if $\Gamma\cup \{\gamma\}$ is an arrangement of pseudocircles.   \end{itemize}}
\end{definition}

\begin{theorem}
\label{thm:sweeping}
\janone{Let $\Gamma$ be an arrangement of pseudocircles.
Let $\gamma_1$ and $\gamma_2$ $\Gamma$-pseudocircles  such that $\{\gamma_1,\gamma_2\}$ is a $\Gamma$-transversal.
Suppose \begin{enumerate}[label={\bf(\ref{thm:sweeping}.\arabic*)},leftmargin=63pt,ref=(\ref{thm:sweeping}.\arabic*)]
\item $\gamma_1\cap \gamma_2$ is a non-trivial arc and 
\item\label{it:deltaIntersect} if $\delta_1,\delta_2\in \Gamma$ are such that $\delta_1\cap \gamma_1=\varnothing$ and $\delta_2\cap \gamma_2=\varnothing$, then $\delta_1\cap \delta_2\ne\varnothing$.
\end{enumerate}
Then there exists a $\Gamma$-pseudocircle $\gamma$ containing $\gamma_1\cap \gamma_2$ and \janthirteen{$\gamma\setminus(\gamma_1\cap \gamma_2)$} is contained in the closure of the face $F$ of $\gamma_1\cup\gamma_2$ not incident with \janseven{$\gamma_1\cap \gamma_2$}.}
\end{theorem}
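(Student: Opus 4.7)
Setup. Write $\alpha := \gamma_1 \cap \gamma_2$ and $\beta_i := \overline{\gamma_i \setminus \alpha}$ for $i=1,2$; then $\gamma_1 \cup \gamma_2$ is a theta-graph in $\sphere$ whose two vertices are the endpoints $p,q$ of $\alpha$ and whose three arcs $\alpha,\beta_1,\beta_2$ partition $\sphere$ into three faces. The face $F$ named in the theorem is the unique one bounded by $\beta_1\cup\beta_2$. Any candidate curve $\gamma\supseteq\alpha$ with $\gamma\setminus\alpha\subseteq\overline F$ has the form $\gamma=\alpha\cup\beta$ for an arc $\beta\subseteq\overline F$ from $p$ to $q$. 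Partition $\Gamma=\Gamma_1\cup\Gamma_2\cup\Gamma_{12}$, where $\Gamma_i=\{\delta\in\Gamma:\delta\cap\gamma_i=\varnothing\}$ and $\Gamma_{12}$ collects those $\delta$ meeting both $\gamma_1$ and $\gamma_2$. The transversal hypothesis gives $\Gamma_1\cap\Gamma_2=\varnothing$, and pseudocircularity of $\Gamma\cup\{\gamma_2\}$ together with a parity argument shows that each $\delta\in\Gamma_1$ meets $\beta_2$ in exactly two points and hence consists of a unique arc $\delta^F\subseteq\overline F$ with both endpoints on $\beta_2$, together with a companion arc in the opposite face; symmetrically for $\delta\in\Gamma_2$.

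Construction. The plan is a sweep: start with $\beta:=\beta_1$ (so $\gamma=\gamma_1$), then iteratively perform local Reidemeister II-style pushes of $\beta$ across the $F$-arcs $\delta^F$ with $\delta\in\Gamma_1$, taken in an \emph{innermost-first} order relative to their position in $\overline F$. Each push is performed inside a small neighbourhood of the chosen $\delta^F$ and replaces a subarc of $\beta$ by a slightly shifted arc running along the inside of $\delta^F$; it adds exactly two transversal crossings of $\gamma$ with $\delta$ and no other intersections. Continue until every $\delta\in\Gamma_1$ has been absorbed. Condition \ref{it:deltaIntersect} is essential here: it forces every $\delta_2^F$ with $\delta_2\in\Gamma_2$ to cross every $\delta_1^F$ with $\delta_1\in\Gamma_1$ inside $F$, since one checks directly that any intersection between $\delta_1\in\Gamma_1$ and $\delta_2\in\Gamma_2$ must lie in $F$. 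Consequently, as the sweep pushes $\beta$ past the $\Gamma_1$-arcs, it is automatically forced to acquire exactly two crossings with each $\delta_2\in\Gamma_2$, while the crossings it adds to curves in $\Gamma_{12}$ combine with the pre-existing intersections $\delta\cap\alpha$ to stay within the budget of two.

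Main obstacle and use of Theorem~\ref{th:auxiliary}. The technical heart is to verify that every push preserves the property that $\Gamma\cup\{\gamma\}$ is an arrangement of pseudocircles. I propose to argue by contradiction: suppose the first push that violates this causes some $\delta\in\Gamma$ to acquire a third crossing with $\gamma$. Tracing through the new push and the previously absorbed $F$-arcs produces an arc in $P(\Gamma\cup\{\gamma\})$ that is a coherent spiral; deforming the short $\beta$-portion of this spiral off $\gamma$ using the local structure of the push yields a coherent spiral lying entirely in $P(\Gamma)$, contradicting Theorem~\ref{th:auxiliary} for $\Gamma$. The main obstacle is thus identifying an innermost $\delta^F$ at each stage for which the push is provably safe, and carrying out the precise extraction of a coherent spiral when it is not; this is where condition \ref{it:deltaIntersect} and the pseudocircularity of both $\gamma_1$ and $\gamma_2$ enter together to rule out the pathological spiraling configurations that would otherwise block the sweep from terminating in the required $\gamma$.
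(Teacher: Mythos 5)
Your setup is sound and matches the paper's framework: the theta-graph decomposition, the observation that a curve missing $\gamma_1$ crosses $\gamma_2$ exactly twice and contributes a single arc $\delta^F$ in $\overline{F}$ with both ends on the opposite boundary arc, and the fact that hypothesis \ref{it:deltaIntersect} forces the $F$-arcs of the two "missing" families to cross inside $F$. The gap is in the engine of the sweep. You allow only Reidemeister II pushes of $\beta$ across the arcs $\delta^F$ of curves missed by $\gamma_1$, taken innermost-first, and you assert that each push "adds exactly two transversal crossings of $\gamma$ with $\delta$ and no other intersections." That is only true when the current curve and $\delta^F$ are incident with a common face of the full arrangement $P(\Gamma\cup\{\gamma\})$; in general the region between them contains arcs of other curves of $\Gamma$ -- in particular of curves that already cross the swept curve twice -- and any push across $\delta^F$ necessarily changes crossings with those as well, breaking the budget of two. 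Innermost-first ordering does not rescue this, because the obstruction is not the nesting of the $\delta^F$ among themselves but the crossings of the whole arrangement lying inside $F$ between the swept curve and its next target; getting past those requires Reidemeister III moves, which your proposal never supplies. Exactly this blocked case is the heart of the paper's argument: when no Reidemeister II move is available, it chooses a minimal separating arc $A$ with region $\Delta_A$, uses Theorem \ref{th:technical} (\ref{it:arrpseudo}$\Rightarrow$\ref{it:noCoherSpiral}) to produce a crossing on $A$ facing $\Delta_A$, shows (this is where \ref{it:deltaIntersect} really enters) that every arc of a curve of $\Gamma$ inside $\Delta_A$ runs from $A$ to the interior of $\gamma_1\setminus\sigma$, builds a triangle $\lambda$ whose two non-$\gamma_1$ sides are crossed by nothing, and performs a Reidemeister III move across it; progress and termination are controlled by a double induction on the number of curves not yet met plus the number of crossings of $P(\Gamma)$ inside $F$.

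Your third paragraph concedes that verifying each push is "the main obstacle" and only sketches that a failed push would yield a coherent spiral in $P(\Gamma)$. That extraction is precisely the technical content that must be carried out (the analogue of the paper's two claims about $\Delta_A$ and the $\lambda$-argument), and without it neither the safety of a single step, nor termination, nor the counting claims you make in passing -- that the swept curve "automatically" ends with exactly two crossings with each curve missing $\gamma_2$, and that curves meeting both $\gamma_1$ and $\gamma_2$ "stay within the budget of two" after combining with their crossings on $\sigma$ -- is established. So while the general strategy (sweep one initial approximation toward the other and invoke the no-coherent-spiral theorem to remove obstructions) is the right one and is the paper's, the proposal as written omits the Reidemeister III mechanism and the minimal-region/spiral analysis that make it work, and therefore is not yet a proof.
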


\janone{Our proof shows that one can sweep \janseven{either of $\gamma_1$ or $\gamma_2$} to the required $\gamma$.  In their classic paper \cite{sweeping}, Snoeyink and Hershberger show how to sweep one curve through the others in an arrangement of pseudoarcs and pseudocircles.  In particular, up to sweeping, $\gamma$ is unique.}

\janseven{\decTwoOhTwoOh{In the proof of Theorem \ref{th:main}, any two curves in the current $\Gamma$ intersect}, so Hypothesis \ref{it:deltaIntersect} holds automatically.}

\bigskip
\begin{cproofof}{Theorem \ref{thm:sweeping}}\octeight{\janone{Let $\sigma=\gamma_1\cap \gamma_2$ and, for} $i=1,2$, let \[\Gamma_i=\{\delta\in \Gamma\;:\; \delta\cap \gamma_i\neq \emptyset \}\,.\] \janone{Because $\{\gamma_1,\gamma_2\}$ is a $\Gamma$-transversal,}  $\Gamma_1\cup \Gamma_2=\Gamma$. Let \janone{$n= n(\Gamma, \gamma_1,\gamma_2)=
|\Gamma_2\setminus  \Gamma_1|$}. Define $k=k(\Gamma,\gamma_1, \gamma_2)$ as the number of crossings in $P(\Gamma)$
included in the face  $F$ of $\gamma_1\cup \gamma_2$ not incident with $\sigma$.   We proceed by induction on $n+k$.
}

\octeight{
We can assume that neither $\Gamma_1\setminus \Gamma_2$ nor $\Gamma_2\setminus \Gamma_1$ is empty, else we pick $\gamma$ to be either equal to $\gamma_2$ or $\gamma_1$. }


\octeight{
If there is an arc $\alpha$ of some $\delta\in \Gamma_2\setminus \Gamma_1$ \janthirteen{incident with a face of $P(\Gamma\cup \{\gamma_1,\gamma_2\})$ that is included in $F$ and incident with $\gamma_1$}, then by shifting some part of \janone{$\gamma_1$} to cross $\alpha$ via a Reidemeister Type II move, we obtain a curve $\gamma_1'$ such that the pair $(\gamma_1',\gamma_2)$ satisfies the same hypothesis as $(\gamma_1, \gamma_2)$. Since  \janone{$n(\Gamma,\gamma_1',\gamma_2)+k(\Gamma,\gamma_1',\gamma_2)<n(\Gamma,\gamma_1',\gamma_2)+k(\Gamma,\gamma_1',\gamma_2)$}, the result follows by induction. 
}

\octeight{
In the alternative, there exists an arc $A$ with ends in $\gamma_2\setminus \sigma$, but otherwise contained in $F\cap P(\Gamma_1\cup\{\gamma_1\})$, such that $\gamma_2\cup A$ separates   $\gamma_1\setminus \sigma$ from $P(\Gamma_2\setminus \Gamma_1)\cap F$. 
Let \janThree{$\Delta_{A}$} be the  \marchTen{closure of the component of $F\setminus A$ that is} incident with both $A$ and $\gamma_1 \setminus \sigma$. }
 Among the finitely many choices for $A$, we choose $A$ so that \janThree{$\Delta_{A}$ is minimal under inclusion}.  \maythreeone{See Figure \ref{fg:DeltaA}.} 
 
 \begin{figure}[ht]
\begin{center}
\includegraphics[scale=0.5]{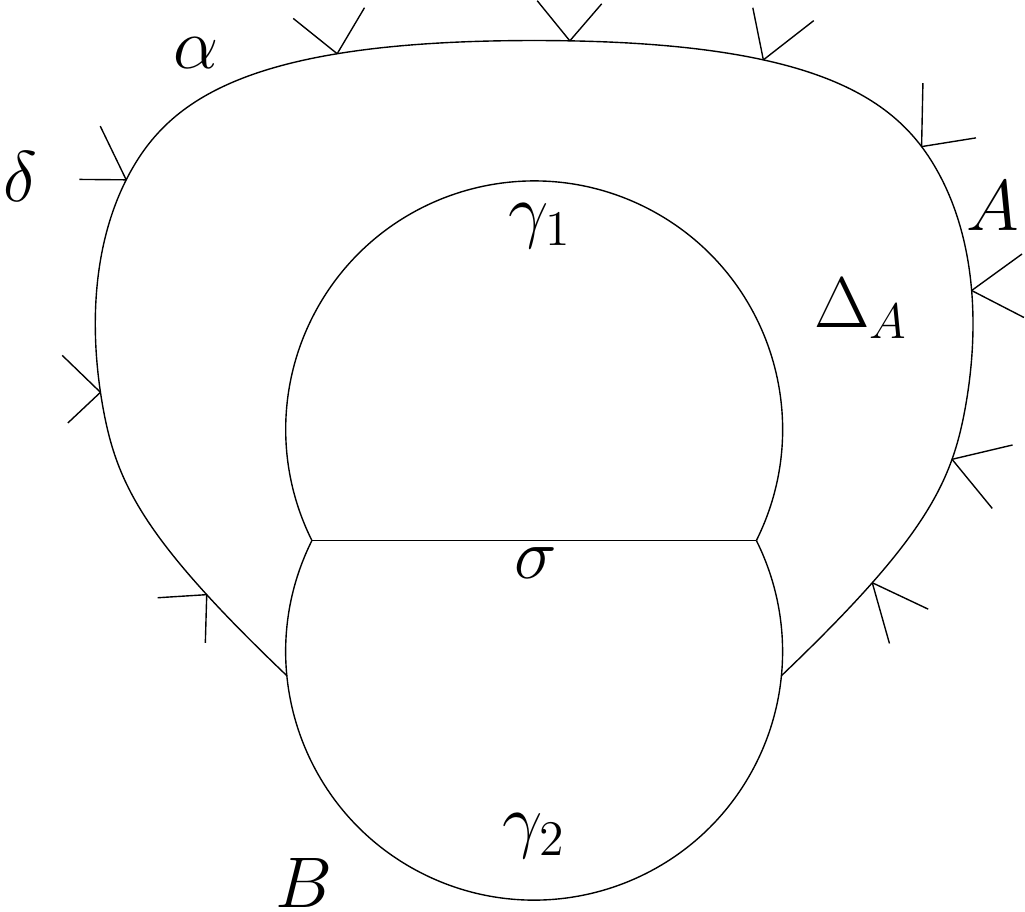}
\end{center}
\caption{The region $\Delta_A$.}  \label{fg:DeltaA}
\end{figure}

\janone{We apply Theorem \ref{th:technical}, \janseven{specifically} \ref{it:arrpseudo}${}\Rightarrow{}$\ref{it:noCoherSpiral}, to see that there is a crossing in $A$ facing $\Delta_A$.  The proof is by contradiction, assuming that every crossing faces the \octeight{other side} of $A$.  In particular, $A$ is a spiral; the contradiction arises from the following.}

\maytwonine{\begin{claim}\label{cl:crossToDeltaA}  \janone{There is a crossing in $A$ facing $\Delta_A$.}
\end{claim}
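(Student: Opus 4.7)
The plan is to argue by contradiction. Suppose every crossing of $A$ faces the side of $A$ opposite $\Delta_A$; then, by definition, $A$ is a spiral. Since $\gamma_1$ is a $\Gamma$-pseudocircle, $\Gamma\cup\{\gamma_1\}$ is an arrangement of pseudocircles, and hence so is the subset $\Gamma_1\cup\{\gamma_1\}$. I would then apply Theorem~\ref{th:technical}, specifically the implication \ref{it:arrpseudo}${}\Rightarrow{}$\ref{it:noCoherSpiral}, to deduce that $P(\Gamma_1\cup\{\gamma_1\})$ admits no coherent spiral. Hence $A$ is not coherent, so some segment $\alpha_i$ of $A$ fails to be coherent: either (i) $\alpha_i$ is external for $A$, or (ii) $\alpha_i$ is internal but each of the extensions $\alpha_i^-$ and $\alpha_i^+$ has one end on the non-$\Delta_A$ side of $A$ (at its crossing end, by the contradiction hypothesis) and the other end on the $\Delta_A$ side.

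The plan is then to use this failure of coherence to construct a new arc $A'$ with $\Delta_{A'}\subsetneq \Delta_A$, contradicting the minimality of $\Delta_A$. In case (ii), a non-coherent extension $\alpha_i^\varepsilon$ enters $\Delta_A$ locally near its far end $a_i^\varepsilon\in A$; following $\alpha_i^\varepsilon$ backwards from $a_i^\varepsilon$ until it first exits $\overline{\Delta_A}$ yields a chord of $\overline{\Delta_A}$ that lies in $P(\Gamma_1\cup\{\gamma_1\})$. Splicing this chord into $A$ in place of a suitable subarc of $A$ produces $A'\subseteq F\cap P(\Gamma_1\cup\{\gamma_1\})$ that still separates $\gamma_1\setminus\sigma$ from $P(\Gamma_2\setminus\Gamma_1)\cap F$, but with $\Delta_{A'}\subsetneq\Delta_A$. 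Case (i) is handled analogously using the arc $\gamma_i\setminus\alpha_i$: since $\gamma_i\in\Gamma_1$ forces $\gamma_i$ to meet $\gamma_1\subseteq\overline{\Delta_A}$, the arc $\gamma_i\setminus\alpha_i$ enters $\Delta_A$ and provides the required chord.

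The main obstacle will be verifying that the spliced arc $A'$ has all the required properties: that it is an arc with ends on $\gamma_2\setminus\sigma$, contained in $F\cap P(\Gamma_1\cup\{\gamma_1\})$, and that it separates $\gamma_1\setminus\sigma$ from $P(\Gamma_2\setminus\Gamma_1)\cap F$. The delicacy arises when the extension (or $\gamma_i\setminus\alpha_i$) temporarily leaves $F$, which can happen when $\gamma_i\in\Gamma_1\cap\Gamma_2$ and the curve crosses $\gamma_2$. Truncating at the first exit from $\overline{\Delta_A}$, together with a case analysis of where this truncation terminates (on $A$, on $\gamma_1\setminus\sigma$, or on $\gamma_2\setminus\sigma$), should resolve these sub-cases and yield the desired $A'$ strictly inside $\Delta_A$.
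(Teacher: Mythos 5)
Your overall strategy is genuinely different from the paper's, and it has a real gap. The paper's proof of this claim never touches the minimality of $\Delta_A$: it assumes all crossings face away from $\Delta_A$ and then \emph{proves} that $A$ is coherent (each segment's curve $\delta\in\Gamma_1$ must reach $\gamma_1$, which lies on the far side of the simple closed curve $A\cup B$ with $B\subseteq\gamma_2\setminus\sigma$; since $\sigma\subseteq\gamma_2$ and $|\delta\cap\gamma_2|\le 2$, not both continuations can escape through $B$, so one of them returns to the interior of $A$ on the spiral side), contradicting Theorem~\ref{th:technical} \ref{it:arrpseudo}${}\Rightarrow{}$\ref{it:noCoherSpiral}. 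You instead run the theorem forward to get an incoherent segment and try to contradict the minimal choice of $\Delta_A$. That framework is legitimate, but the follow-through is where the content of the claim lives, and it is exactly the part you have not supplied.

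Concretely: (1) your case split is incomplete, because an extension $\alpha_i^\varepsilon$ can fail to be coherent not by returning on the $\Delta_A$ side but because its far end $a_i^\varepsilon$ is an endpoint of $A$ (coherence requires both ends in the \emph{interior} of $A$); in that sub-case the extension may stay entirely on the non-$\Delta_A$ side and produces no chord of $\Delta_A$ to truncate. (2) In your case (i), the assertion $\gamma_1\subseteq\overline{\Delta_A}$ is false, since $\sigma=\gamma_1\cap\gamma_2$ is not in $\overline{\Delta_A}$; and even granting that $\gamma_i\setminus\alpha_i$ must enter $\Delta_A$, externality means $\gamma_i\cap A=\alpha_i$, so every component of $\gamma_i\cap\operatorname{Int}\Delta_A$ is a chord with both ends on $\partial\Delta_A\setminus A$ (on $\gamma_1\setminus\sigma$ or on the $\gamma_2$-pieces); such a chord cannot be ``spliced into $A$ in place of a subarc of $A$'', and it need not bound, together with pieces of $\gamma_2$, a smaller region that still has both ends on $\gamma_2\setminus\sigma$ and separates $\gamma_1\setminus\sigma$ from $P(\Gamma_2\setminus\Gamma_1)\cap F$. (3) In your case (ii), when the backward truncation first exits $\overline{\Delta_A}$ through $\gamma_1\setminus\sigma$, the spliced arc has an end on $\gamma_1$, so it is not a competitor in the minimization (which ranges over arcs with both ends in $\gamma_2\setminus\sigma$), and no contradiction with the choice of $A$ results. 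Ruling these scenarios out is not a routine case check: it requires precisely the counting argument the paper uses, namely that $\delta\in\Gamma_1$ forces $\delta$ to meet $\gamma_1$, together with $|\delta\cap\gamma_2|\le 2$ and the fact that $\sigma\subseteq\gamma_2$, which your plan never invokes. As written, the proposal defers the essential difficulty to an unspecified case analysis, so it does not constitute a proof.
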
}

\begin{proof} \janone{By way of contradiction, suppose every crossing faces the other side of $A$; in particular, $A$ is a spiral.  We show that $A$ is coherent, contradicting Theorem \ref{th:technical} (\janseven{specifically}, \ref{it:arrpseudo}${}\Rightarrow{}$\ref{it:noCoherSpiral}). It suffices to show that if $\alpha$ is an arc in the decomposition of $A$, then $\alpha$ is coherent. Let $\delta$ be the curve in $\Gamma_1$ containing $\alpha$.  }

Let $B$ be the \janthirteen{closed} arc in $\gamma_2\setminus \sigma$ such that $A\cup B$ is a simple closed curve.
Consider the continuation of $\delta$ from one end of $\alpha$.  Since $\delta$ crosses $\gamma_1$ twice, the continuation must eventually reach $\gamma_1$; in particular, it must have a first intersection with \janone{$A\cup B$}. 

We show below that it is impossible for both continuations to have these first intersections \augfifteen{in $B$}.  Therefore, for one of them, the first intersection is in the interior of $A$, showing that $\alpha$ is coherent.


\octeight{
So suppose both continuations intersect $B$ for the first time at $p_1$ and $p_2$. Since $|\gamma_2\cap \delta|\leq 2$, $\gamma_2\cap \delta=\{p_1,p_2\}$. Therefore,  $\delta$ is contained in the region bounded by $A\cup (\gamma_2\setminus B)$ and intersects the boundary of this region only at $\alpha$. This shows that $\delta\notin \Gamma_1$, the required contradiction.}
\end{proof}

\maytwonine{\begin{claim}\label{cl:fromAtoDeltak}  Let \octeight{$\delta\in \Gamma$}. Then  every arc in \octeight{$\delta\cap \Delta_{A}$} has one end in the interior of \octeight{$\gamma_1\setminus \sigma$} and one end not in the interior of \octeight{$\gamma_1\setminus \sigma$}.
\end{claim} }

\begin{proof} \maytwonine{Let $\beta$ be an arc \janone{$\delta\cap \Delta_{A}$}.  If $\beta$ has no end in \janone{$\gamma_1\setminus \sigma$}, then $\beta\cup A$ contains an \febFiveChange{arc $A'$} that separates \janone{$\gamma_1\setminus \sigma$}  from \octeight{$P(\Gamma_2)\cap F$} such that \janThree{$\Delta_{A'}$} is properly contained in \janThree{$\Delta_{A}$}, contradicting the choice of $A$.}

If $\beta$ has no end in the complement of the interior of \octeight{$\gamma_1\setminus \sigma$} in the boundary of \janThree{$\Delta_{A}$}, then $\beta$ is contained in \janThree{$\Delta_{A}$} and has both ends in the interior of \octeight{$\gamma_1\setminus \sigma$}.  \augusttwenty{Since \janone{$\delta\cap \gamma_1$}} has exactly two points, these are the two ends of $\beta$.  The preceding paragraph shows that $\beta$ is the only arc \augusttwenty{in \janone{$\delta\cap \Delta_{A}$}}, \octeight{and hence \janone{$\delta\setminus \beta$} is included in the side of  $\gamma_1$ disjoint from $\gamma_2\setminus \sigma$. Thus, for any \janone{$\delta'\in \Gamma_2\setminus \Gamma_1$, since $\delta'\cap \Delta_A$ and $\delta'\cap \gamma_1$ are empty, this implies that $\delta\cap \delta'=\emptyset$, contradicting \ref{it:deltaIntersect}}.}
\end{proof}

\junesix{\janone{Claim \ref{cl:crossToDeltaA} implies there are  distinct elements \octeight{$\delta$, $\delta'$} of $\Gamma_1$  that have a} crossing \octeight{$\times$} in $A$  \junesix{through which they proceed into} the \janThree{$\Delta_{A}$}-side of $A$.    Claim \ref{cl:fromAtoDeltak} shows \marchTen{both extensions from this crossing are arcs \octeight{$\rho$} and \octeight{$\rho'$} in \augusttwenty{\octeight{$\delta$ and $\delta'$}}, respectively, joining \octeight{$\times$} to their ends \octeight{$a$ and $a'$}, respectively, in the interior of \octeight{$\gamma_1\setminus \sigma$}.  All intersections of \augusttwenty{$\delta$ and $\delta'$} with \octeight{$\gamma_1$} are crossings, so this is true in particular \decTwoOhTwoOh{for \octeight{$a$ and $a'$}.}}}

\maytwonine{Since \octeight{$\rho$ and $\rho'$} cross at \octeight{$\times$}, they have at most one other crossing.  Let \octeight{$\times^*$} be that other crossing if it exists; otherwise \octeight{$\times^*$ is $\times$}.  The union of the subarcs of \octeight{$\rho$} from \octeight{$a$} to \octeight{$\times^*$, $\rho'$} from \octeight{$a'$} to \octeight{$\times^*$}, and \octeight{$\gamma_1\setminus \sigma$} from \octeight{$a$ to $a'$} is a simple closed curve $\lambda$ in the closed disc \janThree{$\Delta_{A}$}. } 

We aim to show that the interiors of the arcs \octeight{$\rho\cap \lambda$} and \octeight{$\rho'\cap \lambda$} are not crossed by any curve in $\Gamma$.  \janone{If this is not the case, then there is a $\mu\in\Gamma$ that crosses, say, $\rho\cap \lambda$.  By Claim \ref{cl:fromAtoDeltak}, the component of $\mu\cap \Delta_A$ containing this crossing has a subarc $\mu'$ with one end in $\rho$ and one end in the interior of $A$.  We may assume $\mu'$ has no other intersection with $\rho$.}

\maytwonine{There is an arc $A'\ne A$ in \octeight{$A\cup \mu'\cup \rho_e$} that separates the interior of \octeight{$\gamma_1\setminus \sigma$} from \octeight{$P(\Gamma_2)\cap F$}.  However, \janThree{$\Delta_{A'}$} is a proper subset of \janThree{$\Delta_{A}$}.  This contradiction shows that no curve in \octeight{$\Gamma$ intersects either $\rho\cap \lambda$ or $\rho'\cap \lambda$}.} 

\maytwonine{It follows that we can perform the equivalent of a Reidemeister III move to shift the portion of $\gamma_1$ between $a$ and \janseven{$a'$} across $\times^*$.  \janseven{(It is possible that $a=a'$.  In this case, we still do the Reidemeister III move, starting by shifting $\gamma_1$ into the face bounded by $\lambda$.  In the context of Theorem \ref{th:main}, this case does not occur.)}  If $\gamma_1'$ is the resulting curve, then \janone{$n'(\Gamma,\gamma_1',\gamma_2)=n$, $k'(\Gamma,\gamma_1',\gamma_2)=k-1$}, and hence the result follows from \deceight{the induction}.}
\end{cproofof}

\ignore{
\section{Sweeping a pseudocircle}\label{sec:sweeping}
\octthirteen{In their classic paper \cite{sweeping}, Snoeyink and Hershberger show how to sweep one curve through the others in an arrangement of pseudoarcs and pseudocircles.  In this \janone{section we} show that, with additional hypotheses, a curve can be swept so as to intersect exactly twice all the other curves in the arrangement.}

\begin{theorem}
\label{thm:sweeping}
Let $\Gamma$ be an arrangement of pseudocircles and $\sigma$ is a simple closed arc.
Suppose that there exist simple closed curves $\gamma_1$ and $\gamma_2$ with $\gamma_1\cap\gamma_2=\sigma$ and such that 

\begin{itemize}
\item[(i)] for $i\in\{1,2\}$, $\Gamma\cup \{\gamma_i\}$ is an arrangement of pseudocircles; 
\item[(ii)] for $\delta\in \Gamma$, either  $\delta\cap \gamma_1 \neq \emptyset $ or  $\delta\cap \gamma_2 \neq \emptyset$; and 
\item[(iii)] 
if $\delta_1, \delta_2\in\Gamma$ are so that $\delta_1\cap\gamma_2=\delta_2\cap \gamma_1=\emptyset$,  
 then $\delta_1\cap \delta_2\neq \emptyset$. 
\end{itemize}
Then there exists a simple closed curve $\gamma$ with   $\sigma\subseteq \gamma$ and  such that 
\begin{itemize}
\item   $\gamma\setminus \sigma$ is contained in the closure of the face $F$ of $\gamma_1\cup\gamma_2$ not incident with $\sigma$; and 
\item $\Gamma\cup \{\gamma\}$ is an arrangement of pseudocircles  with $|\delta\cap \gamma|=2$ for  $\delta\in \Gamma$.
\end{itemize}
\end{theorem}

\begin{cproof}
\octeight{
For $i=1,2$, let $\Gamma_i=\{\delta\in \Gamma\;:\; \delta\cap \gamma_i\neq \emptyset \}$. Assumption (iii) is that $\Gamma_1\cup \Gamma_2=\Gamma$. Let $n= n(\Gamma, \gamma_1,\gamma_2)=
|\Gamma_1\setminus  \Gamma_2|$. Define $k=k(\Gamma,\gamma_1, \gamma_2)$ as the number of crossings in $P(\Gamma)$
included in the face  $F$ of $\gamma_1\cup \gamma_2$ not incident with $\sigma$.   We proceed by induction on $n+k$.
}

\octeight{
We can assume that neither $\Gamma_1\setminus \Gamma_2$ nor $\Gamma_2\setminus \Gamma_1$ is empty, else we pick $\gamma$ to be either equal to $\gamma_2$ or $\gamma_1$. }


\octeight{
If there is an arc $\alpha$ of some $\delta\in \Gamma_2\setminus \Gamma_1$ incident with a face of $P(\Gamma)\cup\{\gamma_1,\gamma_2\}$ included in $F$, then by shifting some part of $\delta_1$ to cross $\alpha$ via a Reidemeister Type II move, we obtain a curve $\gamma_1'$ such that the pair $(\gamma_1',\gamma_2)$ satisfies the same hypothesis as $(\gamma_1, \gamma_2)$. Since the corresponding parameters $n'$ and $k'$ for the instance $(\Gamma,\gamma_1',\gamma)$ satisfy that $n'+k'<n+k$, the result follows by induction. 
}

\octeight{
In the alternative, there exists an arc $A$ with ends in $\gamma_2\setminus \sigma$, but otherwise contained in $F\cap P(\Gamma_1\cup\{\gamma_1\})$, such that $\gamma_2\cup A$ separates   $\gamma_1\setminus \sigma$ from $P(\Gamma_2\setminus \Gamma_1)\cap F$. 
Let \janThree{$\Delta_{A}$} be the  \marchTen{closure of the component of $F\setminus A$ that is} incident with both $A$ and $\gamma_1 \setminus \sigma$. }
 Among the finitely many choices for $A$, we choose $A$ so that \janThree{$\Delta_{A}$ is minimal under inclusion}.  \maythreeone{See Figure \ref{fg:DeltaA}.}

\maytwonine{We apply Theorem \ref{th:technical}, particularly \ref{it:arrpseudo}${}\Rightarrow{}$\ref{it:noCoherSpiral}, to see that there is a crossing in $A$ facing $\Delta_A$.  The proof is by contradiction, assuming that every crossing faces the \octeight{other side} of $A$.  In particular, $A$ is a spiral; the contradiction follows from the following.}

\maytwonine{\begin{claim}  Let $S\subset A$ be the closure of arc component of $A\setminus \gamma_2$ for which the closed curve in   $S\cup (\gamma_2\setminus \sigma)$  contains points from $P(\Gamma_2\setminus \Gamma_1)\cap F$. Then $S$ is coherent.
\end{claim}}

\begin{proof} \maytwonine{Let $\alpha$ be an arc in the decomposition of $S$.   \juneonenine{Let $S$ be \augfifteen{the arc in \octeight{$\gamma_2\setminus \sigma$} such that $S\cup B$ is a simple closed curve in $\mathbb S^2$.}
}}

\maytwonine{\juneonenine{\octeight{For some $\delta\in \Gamma_1$},  $\alpha$ is contained in \octeight{$\delta$}.  Consider the continuation of \octeight{$\delta$} from one end of $\alpha$.  Since \octeight{$\delta$} crosses \octeight{$\gamma_1$} twice, the continuation must eventually reach $\gamma_1$; in particular, it must have a first intersection \augfifteen{with $S\cup B$.} }}

\maytwonine{\juneonenine{We show below that it is impossible for both continuations to have these first intersections \augfifteen{in $B$}.  Therefore, for one of them, the first intersection is in the interior of $S$, showing that $\alpha$ is coherent.}}


\octeight{
So suppose both continuations intersect $B$ for the first time at $p_1$ and $p_2$. Since $|\gamma_2\cap \delta|\leq 2$, $\gamma_2\cap \delta=\{p_1,p_2\}$. Therefore,  $\delta$ is contained in the region bounded by $S\cup (\gamma_2\setminus B)$ and intersects the boundary of this region only at $\alpha$. This shows that $\delta\notin \Gamma_1$, a contradiction. Therefore $S$ is coherent.}
\end{proof}

\maytwonine{\begin{claim}\label{cl:fromAtoDeltak}  Let \octeight{$\delta\in \Gamma$}. Then  every arc in \octeight{$\delta\cap \Delta_{A}$} has one end in the interior of \octeight{$\gamma_1\setminus \sigma$} and one end not in the interior of \octeight{$\gamma_1\setminus \sigma$}.
\end{claim} }

\begin{proof} \maytwonine{Let $\beta$ be an arc \octeight{$\delta_1\cap \Delta_{A}$}.  If $\beta$ has no end in the interior of \octeight{$\gamma_1\setminus \sigma$}, then $\beta\cup A$ contains an \febFiveChange{arc $A'$} that separates the interior of \octeight{$\gamma_1\setminus \sigma$}  from \octeight{$P(\Gamma_2)\cap F$} such that \janThree{$\Delta_{A'}$} is properly contained in \janThree{$\Delta_{A}$}, contradicting the choice of $A$.}

If $\beta$ has no end in the complement of the interior of \octeight{$\gamma_1\setminus \sigma$} in the boundary of \janThree{$\Delta_{A}$}, then $\beta$ is contained in \janThree{$\Delta_{A}$} and has both ends in the interior of \octeight{$\gamma_1\setminus \sigma$}.  \augusttwenty{Since \octeight{$\delta_1\cap \gamma_1$}} has exactly two points, these are the two ends of $\beta$.  The preceding paragraph shows that $\beta$ is the only arc \augusttwenty{in \janThree{$\delta_1\cap \Delta_{A}$}}, \octeight{and hence $\delta_1\setminus \beta$ is included in the side of  $\gamma_1$ disjoint from $\gamma_2\setminus \sigma$. Thus, for any $\delta_2\in \Gamma_2\setminus \Gamma_1$, since $\delta_2\cap \Delta_A$ and $\delta_2\cap \gamma_1$ are empty, this implies that $\gamma_1\cap \gamma_2=\emptyset$, contradicting (iii). Hence the claim holds.}
\end{proof}

\junesix{Let \octeight{$\delta$, $\delta'$} be distinct elements of $\Gamma_1$ such that they have a crossing \octeight{$\times$} in $A$  \junesix{through which they proceed into} the \janThree{$\Delta_{A}$}-side of $A$.    Claim \ref{cl:fromAtoDeltak} shows \marchTen{both extensions from this crossing are arcs \octeight{$\rho$} and \octeight{$\rho'$} in \augusttwenty{\octeight{$\delta$ and $\delta'$}}, respectively, joining \octeight{$\times$} to their ends \octeight{$a$ and $a'$}, respectively, in the interior of \octeight{$\gamma_1\setminus \sigma$}.  All intersections of \augusttwenty{$\delta$ and $\delta$} with \octeight{$\gamma_1$} are crossings, so this is true in particular of \octeight{$a$ and $a$}.}}

\maytwonine{Since \octeight{$\rho$ and $\rho$} cross at \octeight{$\times$}, they have at most one other crossing.  Let \octeight{$\times^*$} be that other crossing if it exists; otherwise \octeight{$\times^*$ is $\times$}.  The union of the subarcs of \octeight{$\rho$} from \octeight{$a$} to \octeight{$\times$, $\rho'$} from \octeight{$a'$} to \octeight{$\times^*$}, and \octeight{$\gamma_1\setminus \sigma$} from \octeight{$a$ to $a'$} is a simple closed curve $\lambda$ in the closed disc \janThree{$\Delta_{A}$}. } 

\maytwonine{We aim to show that the interiors of the arcs \octeight{$\rho\cap \lambda$} and \octeight{$\rho'\cap \lambda$} are not crossed by any curve in $\Gamma$.  Suppose by way of contradiction that there is a \augusttwenty{\octeight{$\delta''$}} that has a crossing \octeight{$\times''$} with \octeight{$\rho\cap \lambda$}.  \marchTen{Let  \octeight{$\mu$} be the component of}  \janThree{\octeight{$\delta''\cap \Delta_{A}$}} that contains \octeight{$\times''$}.  Claim \ref{cl:fromAtoDeltak} shows that there is a subarc \octeight{$\mu'$ of $\mu$} having one end in $\rho$ and one end not in the interior of $\gamma_1\setminus \sigma$.  We may further assume \octeight{$\mu'$} has no other intersection with $\rho$. }

\maytwonine{There is an arc $A'\ne A$ in \octeight{$A\cup \mu'\cup \rho_e$} that separates the interior of \octeight{$\gamma_1\setminus \sigma$} from \octeight{$P(\Gamma_2)\cap F$} .  However, \janThree{$\Delta_{A'}$} is a proper subset of \janThree{$\Delta_{A}$}.  This contradiction shows that no curve in \octeight{$\Gamma$ intersects either $\rho\cap \lambda$ or $\rho'\cap \lambda$}.} 

\maytwonine{It follows that we can perform the equivalent of a Reidemeister III move to shift the portion of $\gamma_1$ between $a$ and $a$ across $\times^*$. If $\gamma_1'$ is the resulting curve, then $n'(\Gamma,\gamma_1',\gamma_2)=n-1$, $k'(\Gamma,\gamma_1',\gamma_2)=k$, and hence the result follows from \deceight{the induction}.}
\end{cproof}

}

\section{h-Convex and pseudospherical drawings }\label{sec:hcxPs}

In this section we introduce h-convex drawings and  prove the easy implication \ref{it:Dwps}${}\Rightarrow{}$\ref{it:Dhc}. 

The following notions were introduced by Arroyo et al.~\cite{convex}.  \decTwoOhTwoOh{(Here, for a drawing $D$ of a graph $G$, if $T$ is a subgraph of $G$, then $D[T]$ denotes the subdrawing of $D$ corresponding to $T$.  If $T$ is the subgraph idnduced by an edge $e$ or if $T$ has vertex set $S$ and no edges,  then we write simply $D[e]$ or $D[S]$, respectively.)}

\begin{definition}\label{df:convex}
Let $D$ be a drawing in the sphere of the complete graph $K_n$ \mayonefour{in which any two edges have at} most one point in common \janThree{and that this point, if it exists, is} either a common incident vertex or a crossing point.
\begin{enumerate} [label = {\bf(\ref{df:convex}.\arabic*)},ref=(\ref{df:convex}.\arabic*), leftmargin = 60pt]
\item\label{it:convexSide} Let $T$ be a 3-cycle in $K_n$.  A closed disc $\Delta$ bounded by $D[T]$ is {\em a convex side of $D[T]$\/} if, for every two vertices $x,y$ such that $D[\{x,y\}]\subseteq \Delta$, then $D[xy]\subseteq \Delta$.
\item The drawing $D$ is {\em convex\/} if, for every 3-cycle $T$ in $K_n$, $D[T]$ has a convex side.
\item\label{it:defHconvex} If $D$ is convex, then $D$ is {\em h-convex\/} (short for ``hereditarily convex") if there exists a set $\mathcal C$ consisting of a convex side $\Delta_T$ for every 3-cycle $T$ of $K_n$,  such that, for any two 3-cycles $T,T'$, if $D[T]\subseteq \Delta_{T'}$, then $\Delta_T\subseteq \Delta_{T'}$.
\item The drawing $D$ is {\em f-convex\/} (short for ``face convex") if there is a face $F$ of $D[K_n]$ such that, for every 3-cycle $T$ of $K_n$, the closed disc $\Delta_T$ bounded by $D[T]$ and disjoint from $F$ is convex.
\end{enumerate}
\end{definition}

\octthirteen{A pseudolinear drawing of $K_n$ in the plane is evidently homeomorphic to an f-convex drawing in the sphere.  The converse is proved in \cite{amrs}: taking a witnessing face of an f-convex drawing of $K_n$ in the sphere to be the outer face yields a pseudolinear drawing of $K_n$ in the plane}.  

\octthirteen{Arrangements of pseudolines naturally correspond to rank 3 oriented matroids \cite[Def.\ 5.3.1]{oriented}.  Theorem \ref{th:main} shows that an h-convex drawing of $K_n$ is also equivalent by Reidemeister III moves to a rank 3 oriented matroid.
}

Clearly f-convex drawings are h-convex and h-convex drawings are convex.  
\mayseven{Evidence is given in \cite{convex} to support the conjecture that every \maytwonine{crossing-minimal} drawing of $K_n$ is convex.}  \mayonefour{A polynomial-time algorithm recognizing h-convexity follows from their result that} a drawing of $K_n$ is h-convex if and only if it does not contain as a subdrawing any of the three drawings \mayoneone{(two of $K_5$ and one of $K_6$)} shown in Figure \ref{fg:K5sK6}.  \mayseven{We do not need this result here and there is little overlap of this work with \cite{convex}.} 

\ignore{\decTwoOhTwoOh{A 2-page drawing of $K_n$ is convex.  To see this, let $T$ be a 3-cycle in a 2-page drawing $D$ of $K_n$.    At least two edges of $T$ are on the same closed side of the spine.  If all three on one closed side of the spine, then we take the bounded side $T$ as its convex side.  Since there are no vertices in the interior of $T$, this side is obviously a convex side.  Otherwise, one closed side has at precisely two edges of $T$.  If the vertex incident with these two edges is, in the spine ordering, in the middle of the three vertices of $T$, then the infinite side of $T$ is declared to be the convex side; otherwise the bounded side is the convex side.}

\decTwoOhTwoOh{In the case precisely two edges of $T$ are on one side of the spine, it is not obvious that the chosen side is indeed convex.  Let $u,v,w$ be the vertices of $T$ with $uv$ and $vw$ on the same side of the spine.  If the order on the spine is $u,v,w$, then the chosen side is the infinite side of $T$.  For two vertices $x,y$ on the closed infinite side of $T$, the order on the spine is $x,y,u,v,w$ or $x,u,v,w,y$ and so $xy$ does not cross $T$, as required.  If the order on the spine is $v,u,w$, then any two vertices $x,y$ in the bounded side of $T$ produce the order $v,u,x,y,w$ and again $xy$ does not cross $T$.  Therefore, the drawing $D$ is convex.
}
}

\rbr{\'Abrego et al \cite[Sec.~4.3, 4.4]{abrego2} show that every   crossing-minimal 2-page drawing $D$ of $K_n$ has, up to symmetry, a certain matrix representation of $D$.  It is assumed that the {\em spine\/} of the drawing is the $x$-axis and that each edge is drawn either in the {closed upper} half plane $H^+$ having $y\ge0$ or in the {closed lower} half plane $H^-$ having ${y\le0}$.  For $n$ even, they show there is a unique such drawing, while for $n$ odd, there are some options.}

\rbr{For $n$ even, it is quite straightforward to verify convexity and h-convexity of $D$ by choosing a particular side of each 3-cycle $xyz$.  If $xy,xz,yz$ are either all in $H^+$ or all in $H^-$, then the convex side of $xyz$ is  the bounded side of $xyz$.  If $xy$ and $yz$ are in, say, $H^+$ but $xz$ is not, then there are two cases.  If $y$---the vertex incident with both $H^+$-edges---is in the middle of $x,y,z$ in the linear order on $S$, then the unbounded side of $xyz$ is the convex side.  Otherwise, $y$ is an end of the ordering of $x,y,z$ on $S$ and the bounded side is the convex side.
}

\rbr{The verification of convexity is very simple; h-convexity requires some additional thinking.  Since it is outside the scope of this work, we omit these discussions.  Moreover, for $n$ odd, it is not at all obvious that our arguments hold.  In the case $n$ is odd, the locations of some edges are not determined in \cite{abrego2}.  We expect to be able to provide a detailed analysis of this case in a future publication, which would also include the analysis above for $n$ even.  
}

\begin{figure}[ht!]
\begin{center}
\includegraphics[scale=0.4]{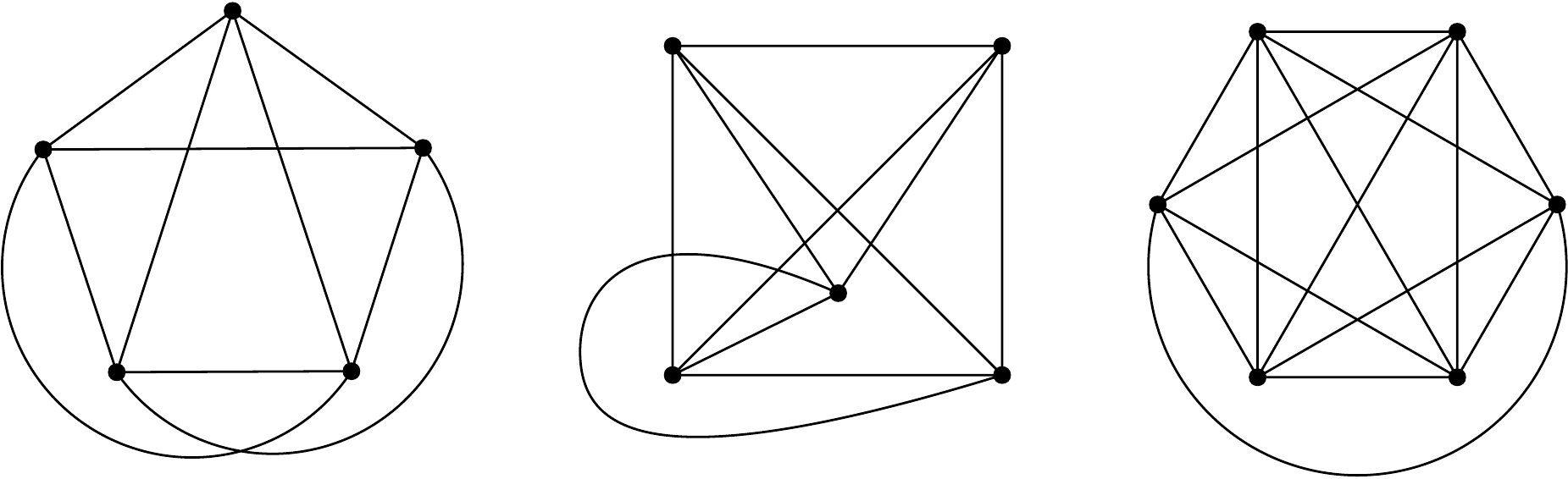}\qquad
\caption{The three obstructions for h-convexity.}\label{fg:K5sK6}
\end{center}
\end{figure}


Our principal goal is to prove Theorem \ref{th:main}.
%
%
As mentioned in the introduction, the implication \ref{it:Dps}${}\Rightarrow{}$\ref{it:Dwps} is trivial.  \decTwoOhTwoOh{To see the implication \ref{it:Dwps}${}\Rightarrow{}$\ref{it:Dhc}, let $e$, $f$, and $g$ be the three edges of a 3-cycle $T$, contained in pseudocircles $\gamma_e$, $\gamma_f$, and $\gamma_g$, respectively.  For each $x\in \{e,f,g\}$, \ref{it:psEdge} shows that the open arc \rbr{$\gamma_x\setminus D[x]$} is contained in one of the two open sides of $D[T]$. Since the $\gamma_x$ cross at their common vertices, it follows that one open side of $D[T]$ is disjoint from all of  $\gamma_e$, $\gamma_f$, and $\gamma_g$; \rbr{we set $\Delta_T$ to be the closure of this side of $D[T]$}.}

\decTwoOhTwoOh{\rbr{To see that $\Delta_T$ is convex, let $u,v$ be vertices in $\Delta_T$}.  Then $u$ and $v$ are on the same side of each of  $\gamma_e$, $\gamma_f$, and $\gamma_g$ and so \ref{it:psEdge} shows $uv$ does not cross any of $\gamma_e$, $\gamma_f$, and $\gamma_g$.  In particular, $uv$ does not cross $D[T]$, so \rbr{$\Delta_T$ is} indeed convex.}

\decTwoOhTwoOh{\rbr{For h-convexity, suppose a second 3-cycle $uvw$ is drawn in $\Delta_T$.  Then} \ref{it:psCrossings} shows that, for example, $\gamma_{uv}$ must cross each of $\gamma_e$, $\gamma_f$, and $\gamma_g$.  Therefore, the side of \rbr{$D[uvw]$ contained in $\Delta_T$ is $\Delta_{uvw}$}, as required.}

%
%
%

The proof of the remaining implication in Theorem \ref{th:main} is given in Section \ref{sec:hCxHasExactExtension}.   
Section \ref{sec:HcxIsPS} provides the necessary discussion of h-convex drawings required to obtain an appropriate initial approximation of the simple closed curve for the ``next" edge of $K_n$ to extend a (partial) collection of curves satisfying \ref{it:psVertex}--\ref{it:psEdge}.

\section{\mayfour{h-convex drawings}}\label{sec:HcxIsPS}

\mayfour{Our main goal now is to prove \octthirteen{the remaining part of} \junesix{Theorem \ref{th:main}:} an h-convex drawing of $K_n$ extends to simple closed curves that satisfy \ref{it:psVertex}, \ref{it:psCrossings}, and  \ref{it:psEdge}.}  
The proof, \junesix{given in the next section, requires three facts about h-convex drawings of $K_n$:  Lemmas \ref{lm:disjtDelta}, \ref{lm:crossingFi}, and \ref{lm:noInterlace} below.  The latter two are straightforward consequences of the first.  However, the proof of the first} is elucidated in this section through a fairly long series of lemmas.  

\junesix{The reader may skip the proof of Lemma \ref{lm:disjtDelta} in order to proceed directly to the proof of Theorem \ref{th:main}.}

\begin{notation}\label{nt:sigmaiAndDi} Let $D$ be an h-convex drawing of $K_n$ and let $\mathcal C$ be a particular choice of convex sides of the 3-cycles witnessing the h-convexity of $D$ as in Definition  \ref{it:defHconvex}.
Let $e$ be any edge of $K_n$ with an arbitrary orientation from one \julythirteen{end of $e$ to the other}.  
\begin{enumerate}[label=({HC\arabic*}), leftmargin = .70truein, start=1, ref=({HC\arabic*})]
\item Set $\Sigma_e^1$ to be the set of all vertices $v$ of $K_n$ not incident with $e$ such that the side in $\mathcal C$ of the 3-cycle containing $v$ and $e$ is the left side, relative to the given orientation of $e$.  \janThree{The remaining vertices not incident with $e$} have their convex side \febSixteenChange{that is in $\mathcal C$} relative to $e$ on the right and they make up $\Sigma_e^2$.  
\item For $i=1,2$, we set $D_e^i$ to be the subdrawing of $D$ \maytwonine{of the complete subgraph} induced by $\Sigma_e^i$ and \janFourteen{the ends of} $e$.
\end{enumerate}
\end{notation}

\maytwonine{Our next lemma is the main point of this section.  Its proof follows its  two  straightforward consequences.}

\begin{lemma}\label{lm:disjtDelta}  Let $D$ be an h-convex drawing of $K_n$ with \febSixteenChange{\deceight{a specified} witnessing set
~of} convex sides.  For $i=1,2$, if $\Sigma_e^i$ is not empty, then there is a closed disc $\Delta_e^i$ that contains $D_e^i$ and  is bounded by a cycle $C_e^i$ \augfifteen{of $K_n$} containing $e$.  Furthermore, 
$\Delta_e^1\cap \Delta_e^2=D[e]$.
\end{lemma}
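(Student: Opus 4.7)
The plan is to build $\Delta_e^i$ as the union, over all $v\in \Sigma_e^i$, of the convex triangle disc $\Delta_{T_v}$ furnished by the specified choice of convex sides, where $T_v$ is the 3-cycle on $\{a,b,v\}$ and $a,b$ are the ends of $e$. The definition of $\Sigma_e^i$ in Notation \ref{nt:sigmaiAndDi} immediately places every $\Delta_{T_v}$ on a single closed side of $D[e]$, with opposite sides for $i=1$ and $i=2$. Hence $\Delta_e^1$ and $\Delta_e^2$ will lie in opposite closed sides of $D[e]$, whose intersection is $D[e]$; so the second assertion $\Delta_e^1\cap\Delta_e^2=D[e]$ will follow at once from the first.

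The first technical step is a nesting lemma for the triangles $T_v$, $v\in \Sigma_e^i$: if $v\in\Delta_{T_u}$, then convexity of $\Delta_{T_u}$ places $D[va]$ and $D[vb]$ inside $\Delta_{T_u}$, so $D[T_v]\subseteq\Delta_{T_u}$, and h-convexity (Definition \ref{df:convex}\ref{it:defHconvex}) upgrades this to $\Delta_{T_v}\subseteq\Delta_{T_u}$. Next I would handle the incomparable case (neither of $u,v$ inside the other's triangle) by analyzing the crossing pattern of the four edges $\{ua,ub,va,vb\}$ in the relevant side of $D[e]$, using convexity of $\Delta_{T_u}$ and $\Delta_{T_v}$ together with the forbidden $K_5$ and $K_6$ subdrawings of Figure \ref{fg:K5sK6}, to conclude that $\Delta_{T_u}\cap\Delta_{T_v}$ and $\Delta_{T_u}\cup\Delta_{T_v}$ both have tractable topological shape; in particular $D[uv]\subseteq\Delta_{T_u}\cup\Delta_{T_v}$.

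Setting $\Delta_e^i:=\bigcup_{v\in\Sigma_e^i}\Delta_{T_v}$, I would show it is a closed topological disc whose boundary is $D[e]$ together with a cycle $C_e^i$ of $K_n$. Connectedness is immediate since each $\Delta_{T_v}$ contains $D[e]$. For the disc and cycle properties, let $M\subseteq\Sigma_e^i$ be the set of $v$ with $\Delta_{T_v}$ inclusion-maximal; the nesting lemma gives $\Delta_e^i=\bigcup_{w\in M}\Delta_{T_w}$. Ordering the $w\in M$ by the cyclic order in which their discs meet $D[e]$ and splicing, for consecutive $w,w'$, the outer portion of $wa$ or $wb$ up to its intersection with $\Delta_{T_{w'}}$, produces (after verifying these portions meet at common vertices in $M$) a simple closed curve made of whole edges of $K_n$; this curve is $C_e^i$, and it bounds $\Delta_e^i$ together with $D[e]$. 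Finally, for $u,v\in\Sigma_e^i$ the edge $D[uv]$ lies in $\Delta_{T_u}\cup\Delta_{T_v}\subseteq\Delta_e^i$ by the dichotomy above, giving $D_e^i\subseteq\Delta_e^i$.

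The main obstacle will be the splicing step that turns the outer arcs of the maximal discs $\{\Delta_{T_w}\}_{w\in M}$ into an honest cycle $C_e^i$ of $K_n$. One must exclude, using convexity and h-convexity, every scenario in which two incomparable $\Delta_{T_w},\Delta_{T_{w'}}$ overlap in a way that forces the boundary to re-use a vertex, to follow only a proper subarc of some edge $ww'$, or to detach into several components. These are precisely the pathologies ruled out by the forbidden subdrawings of Figure \ref{fg:K5sK6}, so the heart of the argument will be a careful case analysis of the possible crossing patterns of the edges at $a$ and $b$ incident to the vertices of $M$; once those are handled, the cycle $C_e^i$ and the disc $\Delta_e^i$ fall out, and the lemma is complete.
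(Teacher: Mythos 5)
Your route (building $\Delta_e^i$ bottom-up as $\bigcup_{v\in\Sigma_e^i}\Delta_{T_v}$, with $T_v$ the triangle on $v$ and the ends of $e$) differs from the paper's, which obtains $\Delta_e^i$ top-down as the closed disc complementary to the face $F_e^i$ of the subdrawing $D_e^i$ and proves in Lemma \ref{lm:leftCycle} that this face is bounded by a cycle; the approach could probably be made to work, but as written it has two genuine gaps. The first is the claim that $\Delta_e^1\cap\Delta_e^2=D[e]$ ``follows at once'' because the two families of triangle discs lie on ``opposite closed sides of $D[e]$.'' The drawing $D[e]$ is an arc in the sphere, and an arc does not separate the sphere, so it has no sides: the left/right dichotomy in Notation \ref{nt:sigmaiAndDi} is only a local condition along $D[e]$. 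A disc $\Delta_{T_u}$ with $u\in\Sigma_e^2$ is attached locally to the right of $e$ but may be very large and wrap around to meet a disc $\Delta_{T_v}$, $v\in\Sigma_e^1$, far from $e$. Excluding exactly this is the nontrivial content of the lemma --- it is why the paper proves Lemmas \ref{lm:planarK4}, \ref{lm:no2VxInC1}, and \ref{lm:no2edgeInC1} before combining them with Observation \ref{Observation}. In your framework you would still have to show that for $u\in\Sigma_e^1$, $v\in\Sigma_e^2$ the triangles $abu$ and $abv$ do not cross (a crossing-$K_4$ argument, as in Lemma \ref{lm:planarK4}), and that neither apex lies in the other's chosen disc (convexity plus h-convexity, then a contradiction with the local side of $e$), before a Jordan-type argument yields $\Delta_{T_u}\cap\Delta_{T_v}=D[e]$; none of this appears in the proposal.

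The second gap is that the heart of your own construction --- that the union of the maximal discs is a closed disc whose boundary, apart from $D[e]$, consists of whole edges of $K_n$ forming a cycle --- is only sketched and is deferred to ``the pathologies ruled out by the forbidden subdrawings of Figure \ref{fg:K5sK6}.'' That characterization is a theorem of \cite{convex} which this paper deliberately does not use, and you give no derivation of a forbidden configuration from any of the pathologies you list (boundary meeting a crossing rather than a vertex, reused vertices, disconnection). The analogous statement in the paper, Lemma \ref{lm:leftCycle}\ref{it:crossK4} (no crossing of $D_e^i$ is incident with $F_e^i$), requires a genuine argument with crossing $K_4$'s, a planar $4$-wheel, and h-convexity; something of that nature is needed and is missing. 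Two smaller points: ordering the maximal discs ``by the cyclic order in which their discs meet $D[e]$'' is not well defined, since every $\Delta_{T_w}$ contains all of $D[e]$ (you presumably intend the rotation at an end of $e$); and for $u,v\in\Sigma_e^i$ no dichotomy is needed for edge containment, since $uv$ cannot cross $e$ (else the $K_4$ on $u,v$ and the ends of $e$ would force $u$ and $v$ into different $\Sigma_e^j$'s, as in Lemma \ref{lm:leftCycle}\ref{it:eCrossFree}) nor the edges $au,bu$, so $D[uv]\subseteq\Delta_{T_u}$ outright.
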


\begin{figure}[ht]
\begin{center}
\includegraphics[scale=0.7]{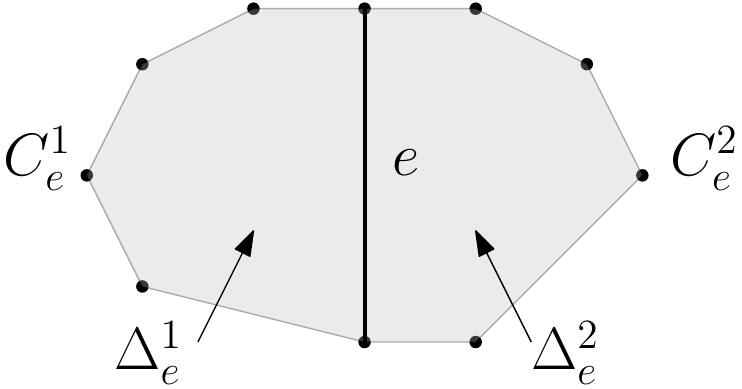} \label{fg:sides_delta}
\caption{Illustrating the consequence of Lemma \ref{lm:disjtDelta}.}
\end{center}
\end{figure}

\maytwonine{Figure \ref{fg:sides_delta} illustrates the conclusion of Lemma \ref{lm:disjtDelta}.  Before the proof, we give two simple consequences, which are also used in the next section.}

The \maytwonine{first} simple consequence is about edges not in either $\Delta_{e}^1$ or $\Delta_e^2$.
\marchTwentysix{Recall that $\sphere$ denotes the sphere.  \augfifteen{For $e\in E(K_n)$, set $F_e=\sphere\setminus (\Delta_{e}^1\cup\Delta_{e}^2)$. } }

\begin{lemma}\label{lm:crossingFi} Let $D$ be an h-convex drawing of $K_n$ with \deceight{a specified} witnessing set 
of convex sides and 
 $e$ and $e'$ be distinct edges of $K_n$.  If $D[e']$ has a point in $F_e$, then $e'$ has an end in each of $\Sigma_{e}^1$ and $\Sigma_{e}^2$.  
\end{lemma}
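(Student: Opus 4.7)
The plan is a short contrapositive argument that uses Lemma \ref{lm:disjtDelta} as a black box. Let $u$ and $v$ denote the ends of $e$. First, I will invoke Lemma \ref{lm:disjtDelta} to obtain closed discs $\Delta_e^1$ and $\Delta_e^2$ with $D_e^i\subseteq \Delta_e^i$ and $\Delta_e^1\cap \Delta_e^2=D[e]$. (The degenerate case in which some $\Sigma_e^i$ is empty is vacuous: then $V(D_e^i)=\{u,v\}$, the only edge of $D_e^i$ is $e$ itself, and no $e'\ne e$ can have both ends in $V(D_e^i)$.)

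The only conceptual point is a direct consequence of Notation \ref{nt:sigmaiAndDi}: $D_e^i$ is the subdrawing of $D$ corresponding to the complete subgraph on the vertex set $V(D_e^i)=\Sigma_e^i\cup\{u,v\}$. Therefore, any edge $e'$ of $K_n$ whose two ends both lie in $V(D_e^i)$ is an edge of this complete subgraph, so $D[e']\subseteq D_e^i\subseteq \Delta_e^i$; in particular, $D[e']$ is disjoint from $F_e=\sphere\setminus(\Delta_e^1\cup\Delta_e^2)$.

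It remains to do a short case check on the ends of $e'$. The vertex set of $K_n$ partitions as $\{u,v\}\sqcup \Sigma_e^1\sqcup \Sigma_e^2$, and since $e'\ne e$, at most one end of $e'$ can be in $\{u,v\}$. If it is not the case that $e'$ has one end in each of $\Sigma_e^1$ and $\Sigma_e^2$, then the ends of $e'$ must both lie in a single $V(D_e^i)=\Sigma_e^i\cup\{u,v\}$ for some $i\in\{1,2\}$; by the preceding paragraph, this forces $D[e']\cap F_e=\varnothing$. Contrapositively, if $D[e']$ meets $F_e$, then the two ends of $e'$ are distributed one to each $\Sigma_e^i$, as required.

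I anticipate no real obstacle: once Lemma \ref{lm:disjtDelta} is available, the lemma is essentially an unpacking of the definitions of $D_e^i$ and $F_e$ combined with the one-line case analysis above.
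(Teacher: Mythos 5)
Your proof is correct and follows essentially the same route as the paper: the paper also argues the contrapositive, observing that if both ends of $e'$ lie in $\{u,v\}\cup\Sigma_e^k$ for some $k$, then $D[e']\subseteq\Delta_e^k$ by Lemma \ref{lm:disjtDelta}, hence $D[e']\cap F_e=\varnothing$. Your extra remarks on the degenerate case and on $D_e^i$ being a complete subdrawing are just explicit unpackings of what the paper leaves implicit.
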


\begin{cproof} \marchTwentysix{Let $u$ and $v$ be the \julythirteen{ends of $e$}.} \janThree{We prove the contrapositive. Suppose that for some $k\in\{1,2\}$, $e'$ has both ends in $\{u,v\}\cup\Sigma_{e}^k$. Then $D[e']$ is contained in $\Delta_{e}^k$ by Lemma \ref{lm:disjtDelta}, so $D[e']\cap F_e=\varnothing$. }
\end{cproof}

We need some notation for the \decTwoOhTwoOh{next lemma}.
For distinct edges $e,e'$ of $K_n$, label each vertex of $C_{e}^1$ \julythirteen{(from Lemma \ref{lm:disjtDelta})} with 1, 2, 3, respectively, to indicate it is in $\Sigma_{e'}^1$, in $\Sigma_{e'}^2$, or incident \febFiveChange{with $e'$}.

\begin{lemma}\label{lm:noInterlace}  \mayfour{Suppose $D$ is an h-convex drawing of $K_n$ with \deceight{a specified} witnessing set 
of convex sides,  $e,e'$ are edges of $K_n$, and $C^1_{e}$ is labelled as in the preceding sentence.  Then t}here is no $1,2,1,2$ pattern in the cyclic order around $C_{e}^1$.  \end{lemma}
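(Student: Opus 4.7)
The plan is to argue by contradiction, exploiting the two ways Lemma~\ref{lm:disjtDelta} constrains the same chord: once through $e$ and once through $e'$. Suppose four distinct vertices $a,b,c,d$ appear in this cyclic order on $C_e^1$ with labels $1,2,1,2$, so $a,c\in\Sigma_{e'}^1$ and $b,d\in\Sigma_{e'}^2$ (in particular none of $a,b,c,d$ is an end of $e'$). I would then look at the two diagonals $D[ac]$ and $D[bd]$ and derive that they must both cross (by a Jordan curve argument inside $\Delta_e^1$) and be disjoint (because the drawing is simple), which is the contradiction.

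First I would pin down where the diagonals live. Since $\{a,b,c,d\}\subseteq V(D_e^1)$, Lemma~\ref{lm:disjtDelta} applied to $e$ gives $D[ac]\cup D[bd]\subseteq\Delta_e^1$. Applying Lemma~\ref{lm:disjtDelta} to $e'$, the inclusions $\{a,c\}\subseteq V(D_{e'}^1)$ and $\{b,d\}\subseteq V(D_{e'}^2)$ yield $D[ac]\subseteq\Delta_{e'}^1$ and $D[bd]\subseteq\Delta_{e'}^2$. Combining these with the disjointness clause $\Delta_{e'}^1\cap\Delta_{e'}^2=D[e']$ of Lemma~\ref{lm:disjtDelta},
\[
D[ac]\cap D[bd]\subseteq D[e'].
\]

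Next I would rule out that any such intersection point actually exists. A point of $D[ac]\cap D[bd]$ is not a vertex (since $a,c$ and $b,d$ are disjoint pairs), so it is a crossing of the two edges; by the containment just derived, it lies on $D[e']$. It cannot be an end of $e'$, because an edge of $K_n$ passes through no vertex other than its own ends, and $a,b,c,d$ are not ends of $e'$. It also cannot be an interior point of $D[e']$, since the standing assumption on drawings (\ref{df:convex}) forbids three edges from sharing a non-vertex point. Therefore $D[ac]\cap D[bd]=\varnothing$.

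Finally I would close the loop with a Jordan-curve argument inside the disc $\Delta_e^1$. The arc $D[ac]$ splits $\Delta_e^1$ into two closed subdiscs $\Delta_1,\Delta_2$ meeting only along $D[ac]$; because $a,b,c,d$ are alternating on $\partial\Delta_e^1=C_e^1$, the vertex $b$ lies in $\Delta_1\setminus D[ac]$ and $d$ lies in $\Delta_2\setminus D[ac]$ (or vice versa). Since $D[bd]\subseteq\Delta_e^1$ is a path from $b$ to $d$, it must cross $D[ac]$, contradicting the previous paragraph. The main obstacle is purely bookkeeping: checking that both applications of Lemma~\ref{lm:disjtDelta} really apply simultaneously to the same chord and that no degenerate coincidence (such as $a,b,c,d$ accidentally including an end of $e'$) can spoil the three-edges-at-a-point argument; the label hypothesis takes care of this automatically.
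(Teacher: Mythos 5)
Your proof is correct and is essentially the paper's own argument: the alternation of $a,b,c,d$ on $C_e^1$ together with the containment of the two diagonals in $\Delta_e^1$ forces $D[ac]$ and $D[bd]$ to cross, while Lemma \ref{lm:disjtDelta} applied to $e'$ (placing $D[ac]\subseteq\Delta_{e'}^1$ and $D[bd]\subseteq\Delta_{e'}^2$ with $\Delta_{e'}^1\cap\Delta_{e'}^2=D[e']$) forbids such a crossing. The only cosmetic difference is that you exclude an intersection point on $D[e']$ by appealing to a ``no three edges through a non-vertex point'' rule that Definition \ref{df:convex} does not explicitly state; this step is easily replaced either by citing Lemma \ref{lm:no2edgeInC1} with $e'$ in place of $e$ (no edge of $D_{e'}^2$ crosses an edge of $D_{e'}^1$), or by observing that near an interior point of $D[e']$ the arcs $D[ac]$ and $D[bd]$ lie on opposite closed sides of $D[e']$, so any common point there would be tangential, contradicting that all intersections of edges are crossings.
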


\begin{cproof}  Otherwise, there are four vertices $v_1,v_2,v_3,v_4$ \febSixteenChange{of $C_{e}^1$ in} this cyclic order  with $v_1,v_3$ having label 1 and $v_2,v_4$ having label 2.  As all  $v_i$ are in $\Sigma_{e}^1$, the definition of $C_{e}^1$ implies $v_1v_3$ crosses $v_2v_4$ in $D_{e}^1$.  

However, the edge $v_1v_3$ is in \febFiveChange{$D_{e'}^1$}, while $v_2v_4$ is in \febFiveChange{$D_{e'}^2$}. \maytwonine{Lemma \ref{lm:crossingFi} shows that both $v_1v_3$ and $v_2v_4$ are contained in $\Delta_e^1$, so they cross in $D$.  However,} Lemma \ref{lm:disjtDelta} implies they do not cross, a contradiction.
\end{cproof}

The remainder of this section is devoted to the proof of Lemma \ref{lm:disjtDelta}.  In the arguments below, we will use, without particular reference, the following observation:  if $e$ and $f$ are crossing edges, then each of the four 3-cycles in the unique $K_4$ containing $e$ and $f$ \julythirteen{has} a side (the one containing the fourth vertex of the $K_4$) that is definitely not convex.  Our drawings are convex, so such a 3-cycle has a unique convex side, which is in every set of convex sides witnessing h-convexity. \augfifteen{If $C$ is the 4-cycle bounding a face of this $K_4$, then the closed disc bounded by $D[C]$ and containing $D[e]\cup D[f]$ is the {\em crossing side\/} of $D[C]$.}

\begin{lemma}\label{lm:leftCycle}  Let $D$ be an h-convex drawing of $K_n$ with \deceight{a specified} witnessing set 
of convex sides, and let $e$ be an edge of $K_n$.
Let $i\in \{1,2\}$ and \febFiveChange{suppose $|\Sigma_e^i|\ge 1$}.   
\begin{enumerate}[label=(\roman*)] 
\item\label{it:eCrossFree} In $D_e^i$, $e$ is not crossed. In particular, there is a face $F_e^i$ \febSixteenChange{of $D_e^i$} incident with $e$ and containing \decTwoOhTwoOh{$\Sigma_e^{(3-i)}$}.  

\item\label{it:crossK4} \julythirteen{If $J$} is any crossing $K_4$ in $D_e^i$, then $F_e^i$ is contained in the face of $D[J]$ bounded by a 4-cycle.  In particular, no crossing of $D_e^i$ is incident with $F_e^i$, so $F_e^i$ is bounded by a cycle  in $D_e^i$.
\end{enumerate}
\end{lemma}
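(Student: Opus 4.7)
My plan is to prove the two parts of the lemma in sequence, relying on convexity, h-convexity, and the definitions of $\Sigma_e^i$ and $D_e^i$.

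For the first statement of (i), I will argue by contradiction that $e$ is not crossed in $D_e^i$. If an edge $xy$ of $D_e^i$ crosses $e$, then $x,y\in \Sigma_e^i$ (neither can be $u$ or $v$). In the convex $K_4$ on $\{u,v,x,y\}$ with crossing $uv\times xy$, convexity forces $y\notin \Delta_{uvx}$ and $x\notin \Delta_{uvy}$. The arc $ux\cup vx$ of $T=uvx$ lies in the closed disc $\sphere\setminus \mathrm{int}(\Delta_{uvy})$ and has endpoints $u,v$ on the boundary $\partial\Delta_{uvy}$; it therefore divides that disc into two regions, the one not having $y$ on its boundary being bounded by $T$ and thus coinciding with $\Delta_{uvx}$. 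Hence $\Delta_{uvx}$ and $\mathrm{int}(\Delta_{uvy})$ are disjoint. But since both interiors lie on side $i$ of $e$ locally, they must overlap in every neighborhood of an interior point of $e$, the desired contradiction. Consequently $e$ is uncrossed in $D_e^i$ and lies on the boundary of exactly two faces; I define $F_e^i$ to be the one on side $3-i$.

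For the second statement of (i), fix $w\in \Sigma_e^{3-i}$ and show $w\in F_e^i$. The interior of $\Delta_{uvw}$ is on side $3-i$ of $e$ locally, so a nontrivial piece of it already lies in $F_e^i$; it suffices to show $\mathrm{int}(\Delta_{uvw})\subseteq F_e^i$, equivalently $D_e^i\cap \mathrm{int}(\Delta_{uvw})=\emptyset$. First, h-convexity rules out any vertex $x\in \Sigma_e^i$ being in $\Delta_{uvw}$: were $x\in \Delta_{uvw}$, we would get $\Delta_{uvx}\subseteq \Delta_{uvw}$, but these have interiors on opposite sides of $e$ locally, contradicting the nesting. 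Second, no edge $\epsilon$ of $D_e^i$ can enter $\mathrm{int}(\Delta_{uvw})$: since $e$ is uncrossed in $D_e^i$ and $\partial\Delta_{uvw}=e\cup uw\cup vw$, such an $\epsilon$ would have to cross $uw$ or $vw$; a case analysis on $\epsilon$'s endpoints, each case producing a crossing $K_4$ and applying the same nesting obstruction via h-convexity, rules this out. Therefore $\mathrm{int}(\Delta_{uvw})$ is contained in a single face of $D_e^i$ incident with $e$ on side $3-i$, which is $F_e^i$, and $w\in F_e^i$.

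For part (ii), let $J=\{a,b,c,d\}$ be a crossing $K_4$ in $D_e^i$ with crossing $ab\times cd$. Since $D[J]\subseteq D_e^i$, the face $F_e^i$ is contained in a single face of $D[J]$, either the quadrilateral face bounded by the 4-cycle with edges $ac, cb, bd, da$, or one of the four triangular faces around the crossing. Each triangular face is contained in the convex side $\Delta_T\in\mathcal{C}$ of the 3-cycle $T$ of $J$ cutting that face off from the fourth vertex. Using the side-of-$e$ information for the vertices of $J$ (which lie in $\Sigma_e^i$ versus $\{u,v\}$) together with the nesting results established in (i), I show that no triangular face is adjacent to $e$ from side $3-i$, so $F_e^i$ cannot lie in one; hence $F_e^i$ lies in the 4-cycle face. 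The ``in particular'' sentence is then immediate: the 4-cycle face has no crossing on its boundary, so neither does $F_e^i$, and since $D_e^i$ is a drawing of the $2$-connected graph $K_{|\Sigma_e^i|+2}$, the boundary of $F_e^i$ is a simple cycle.

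The principal obstacle is the edge-crossing step in part (i)'s second claim: ruling out that any edge $\epsilon$ of $D_e^i$ crosses $uw$ or $vw$. The corresponding crossing $K_4$ produces several sub-cases according to whether $\epsilon$'s endpoints include $v$, $u$, or only vertices of $\Sigma_e^i$, and each requires a careful application of h-convexity to two convex discs whose interiors are forced to lie on opposite sides of $e$. Once this nesting technology is in place, it also powers the side-of-$e$ analysis of the triangular faces of $D[J]$ in part (ii).
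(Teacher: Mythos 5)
There is a genuine gap, and it sits exactly where you flagged ``the principal obstacle.'' Your route to the second assertion of (i) requires that no edge $\epsilon$ of $D_e^i$ enters the interior of $\Delta_{uvw}$ for $w\in\Sigma_e^{3-i}$, and you propose to kill each such $\epsilon$ by forming a crossing $K_4$ and re-using the ``two convex discs forced to opposite sides of $e$'' obstruction. That mechanism works when $\epsilon$ is incident with $u$ or $v$ (then the crossing $K_4$ contains $e$; this is the paper's Lemma \ref{lm:planarK4}), but it cannot work in the crucial case $\epsilon=xy$ with $x,y\in\Sigma_e^i$ crossing $uw$ and $vw$: the crossing $K_4$s you obtain are on $\{u,w,x,y\}$ and $\{v,w,x,y\}$, neither of which contains $e$, so no two convex sides sharing the edge $e$ are ever forced and the nesting contradiction never materializes. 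What you are trying to prove here is essentially Lemma \ref{lm:no2edgeInC1} (no edge of $D_e^2$ crosses an edge of $D_e^1$), which the paper can only establish \emph{after} the present lemma, via the boundary cycle $C_e^i$ of part (ii), Lemma \ref{lm:SigmaIsF} and Lemma \ref{lm:no2VxInC1}; so your plan front-loads the hard content of the whole section and, as sketched, has no argument for it. (The paper itself sidesteps this: its proof of (i) only defines $F_e^i$, the full containment of $\Sigma_e^{3-i}$ being delivered later by Lemma \ref{lm:no2VxInC1}, and its proof of (ii) never uses that containment.)

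Part (ii) has a second, related gap. Your reduction ``$F_e^i$ lies in a face of $D[J]$; rule out the four crossing-incident faces'' is fine, and your argument matches the paper's when $e$ is an edge of $J$. But when $e$ is not in $J$, the edge $e$ (and possibly both of its ends) lies wholly inside a face of $D[J]$, so ``no triangular face is adjacent to $e$ from side $3-i$'' is not the right statement, and the nesting tools only give inclusions such as $\Delta_{uvz}\subseteq\Delta_T$ for 3-cycles $T$ of $J$ with $e$ in the interior of $\Delta_T$, which is not contradictory. This is precisely where the paper needs a genuinely different device: it takes an end $u$ of $e$ outside $J$, uses convexity to place the four edges from $u$ to $V(J)$ on the crossing side, obtains a plane 4-wheel, pins down the convex sides of its triangles by h-convexity, and then locates the other end $v$ to force two vertices of the 4-cycle into different $\Sigma_e$'s. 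Without that (or an equivalent argument), your case $e\notin J$ is unproved, so both halves of the lemma remain open in your write-up.
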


\begin{cproof}
If $e$ were crossed by an edge $f$ in $D_e^i$, then the crossing $K_4$ shows that the two ends of $f$ are \decTwoOhTwoOh{not both in the same one} of $\Sigma_e^1$ and $\Sigma_e^2$, a contradiction.   Thus, $e$ is not crossed and is incident with exactly two faces of $D_e^i$; $F_e^i$ is the one containing \decTwoOhTwoOh{$\Sigma_e^{(3-i)}$}, completing \ref{it:eCrossFree}.

For \ref{it:crossK4}, let $C$ be the 4-cycle in $J$ bounding a face of $D[J]$.  We rule out one trivial case immediately.  If $e$ is in $J$, then, since $e$ is \febTwelf{not crossed in $D_e^i$}, it is in $C$.  The \febSixteenChange{convex sides (these are unique and in $\mathcal C$)} of each 3-cycle in $J$ are all on the crossing side of $D[C]$.  \febSixteenChange{Since the two vertices of $J$ not incident with $e$ are in $\Sigma_e^i$,}  $F_e^i$ is contained in the \maytwonine{face of $D[J]$} bounded by $C$, as required.

Therefore, we may assume there is a vertex $u$ incident with $e$ and not in $J$.   As $e$ is not crossed in $D_e^i$, \febTwelf{$D[e]$} is contained in one of the faces $F$ of $D[J]$.  \febFiveChange{Since \febTwelf{$D[u]$} is incident with $F_e^i$,} $F$ is also the face of $D[J]$ containing~$F_e^i$. We are done if $F$ is bounded by a 4-cycle in $J$, so we assume\julythirteen{, by way of contradiction,} that $F$ is incident with the crossing of $D[J]$.

Convexity implies that the edges joining $u$ to the vertices of $J$ are all contained on the crossing side of $D[C]$.  Thus, $C$, $u$, and these four edges constitute a planar embedding of the 4-wheel $W$.  Each of the four 3-cycles in $W$ has its convex side on the crossing side of $D[C]$:  three of these 3-cycles are contained in convex sides of 3-cycles of $J$, so \febFiveChange{for them} the assertion follows from h-convexity; the fourth is in a crossing $K_4$ on the crossing side of $D[C]$.

If $e$ is one of the edges of $W$, then the \febSixteenChange{end of $e$ different from $u$ has} two neighbours in $C$ that are in different ones of $\Sigma_e^1$ and $\Sigma_e^2$, a contradiction. 
If $e$ is not in $W$, then its other end $v$ is in one of the four faces of $D[W]$ incident with $u$.  This implies that $v$ is on the convex side of the bounding 3-cycle and that the two vertices of $C$ in this 3-cycle \febFiveChange{are in different ones of $\Sigma_e^1$ and $\Sigma_e^2$, a contradiction.}
\end{cproof}

For $i=1,2$, \febSixteenChange{if $\Sigma_e^i\ne\varnothing$, then let} $C_e^i$ be the cycle in $D_e^i$ that is the boundary of $F_e^i$.  All its vertices not incident with $e$ are, \janThree{by definition}, in $\Sigma_e^i$.  \augfifteen{Note that, with $F_e$ \maytwonine{as defined} preceding Lemma \ref{lm:crossingFi}, $F_e=F_e^1\cap F_e^2$.}

Our next lemma asserts that, for any three vertices in $D_e^i$, the \deceight{specified} convex side that they bound is contained in the side of $D[C_e^i]$ not containing $F_e^i$.  In particular, this shows that $D_e^i$ is f-convex.

\augusttwenty{We remark that the main result of \cite{amrs} further implies that $D_e^i$ is pseudolinear.  Thus, \febFiveChange{any edge $e$} of an h-convex drawing partitions the vertices into two pseudolinear subdrawings \febFiveChange{$D_e^1$ and $D_e^2$}.  This generalizes the fact that, in a spherical drawing, \julythirteen{for each great circle $C$ that contains an edge, the vertices in either closed side of $C$ induce a \augfifteen{rectilinear drawing}}.}

\ignore{
 To this end, the following observation is important.  It implies that if $u$ and $v$ are any two vertices drawn in the convex side of a 3-cycle, then the edge $uv$ is also drawn on that side.  (This is a very natural definition for a convex side of a 3-cycle.  While this definition of convex side of a 3-cycle is strictly stronger than \ref{it:cx3cycle} in Definition \ref{df:convex}, this next claim shows that the resulting definition for convex drawings is the same as \ref{it:convex}.)

{\bf\Large (We defined convex side in the strong way.  Do we need this lemma?  It is used in the proof of Lemma 4.7 (just following). This is the only place 4.6 is referenced.)}

\begin{lemma}\label{lm:edgeOnConvexSide}  Let $D$ be a convex drawing of $K_n$ and let $T$ be any 3-cycle in $K_n$.  If there is an edge $D[e]$ that crosses $D[T]$ \mayfour{exactly} twice in $D$, then the ends of $e$ are \augustsix{on the same side of $D[T]$ and that side is not convex}.
\end{lemma}

\begin{cproof}  \augustsix{Since $D[e]$ crosses $D[T]$ an even number of times, the ends $u$ and $v$ of $e$ are on the same side of $D[T]$.}
Suppose the side $F$ of $D[T]$ containing $D[\{u,v\}]$ is convex.  Label the vertices of $T$ as $a,b,c$ so that $uv$ crosses $ab$ and $ac$.  Let $T'$ be the 3-cycle $uvb$.  By convexity of $F$, both the edges $ub$ and $vb$ are drawn in $F$.  The edge $ab$ crosses $uv$, so the side of $T'$ containing $a$ is not a convex side of $T'$.   
The edge $ac$ crosses $uv$, but no other edge of $T'$.  Therefore, $c$ is on the convex side of $T'$.  

Consider the simple closed curve $\gamma$ consisting of the portion of $ab$ from $b$ to the crossing with $uv$, the portion of $uv$ from this crossing, through the crossing with $ac$, and on to the end, which we may call $v$, of $uv$, and then along $vb$. Thus, $\gamma$ separates $c$ and $u$.

Since the side of $T'$ containing $c$ is convex, $cu$ must be contained in this side of $T'$.   Because $cu$ \febSixteenChange{crosses $\gamma$ but not $T'$}, the only possibility is that it crosses $ab$.  But then $uc$ shows the \febSixteenChange{side of $T$ containing $u$} is not convex, a contradiction.
\end{cproof}
}

\begin{lemma}\label{lm:SigmaIsF}
Let $D$ be an h-convex drawing of $K_n$ with \febSixteenChange{witnessing set $\{\Delta_T\mid T \textrm{ is a 3-cycle of } K_n\}$ of} convex sides.  Let $e$ be an edge of $K_n$ and let $i\in \{1,2\}$.  If $|\Sigma_e^i|\ge 1$, then, \febFiveChange{for each 3-cycle $T$ in $D_e^i$, $\Delta_T\cap F_e^i=\varnothing$.  In particular,} $D_e^i$ is f-convex.
\end{lemma}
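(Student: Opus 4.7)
The plan is a proof by contradiction. The key topological reduction is that since $D[T] \subseteq D_e^i$ is disjoint from the open face $F_e^i$ of $D_e^i$, and $\partial\Delta_T = D[T]$, the intersection $\Delta_T \cap F_e^i$ is both open and closed in the connected open disc $F_e^i$; hence it equals either $\varnothing$ (as desired) or all of $F_e^i$. So I only need to rule out the possibility $F_e^i \subseteq \Delta_T$. Once this is done, the f-convexity of $D_e^i$ follows by taking $F_e^i$ as the witnessing face in Definition \ref{df:convex}.

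Assuming $F_e^i \subseteq \Delta_T$, I have $u, w \in \overline{F_e^i} \subseteq \Delta_T$ and $\Sigma_e^{3-i} \subseteq F_e^i \subseteq \Delta_T$. I would argue in two cases. First, if $T$ contains $e$, then $T = \{u,w,v\}$ for some $v \in \Sigma_e^i$, and by the definition of $\Sigma_e^i$ the disc $\Delta_T$ lies locally on the $i$-side of $e$. Picking any $v' \in \Sigma_e^{3-i}$ and applying convexity to $u, w \in \partial\Delta_T$ and $v' \in \Delta_T$ yields $D[\{u,w,v'\}] \subseteq \Delta_T$; h-convexity then gives $\Delta_{\{u,w,v'\}} \subseteq \Delta_T$. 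But $\Delta_{\{u,w,v'\}}$ lies locally on the $(3-i)$-side of $e$ by definition of $\Sigma_e^{3-i}$, so $\Delta_T$ would contain a full neighborhood of both sides of $e$, contradicting $e \subseteq \partial\Delta_T$. Second, if $T$ does not contain $e$, I would reduce to the first case by choosing a vertex $v \in T \cap \Sigma_e^i$ and using convexity and h-convexity to produce a 3-cycle $T^* = \{u,w,v\}$ with $\Delta_{T^*} \subseteq \Delta_T$. The goal would then be to show $F_e^i \subseteq \Delta_{T^*}$, reducing to the previous case applied to $T^*$.

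The hard part will be the second case, specifically showing $F_e^i \subseteq \Delta_{T^*}$: the containments $F_e^i \subseteq \Delta_T$ and $\Delta_{T^*} \subseteq \Delta_T$ alone do not yield it. I expect this will require an iterative application of h-convexity, using Lemma \ref{lm:leftCycle}'s identification of $\partial F_e^i$ with the cycle $C_e^i$ to track how $F_e^i$ sits inside successive convex sides; induction on the combinatorial distance of $T$ from $C_e^i$ is a natural setup. A separate small argument is also needed for the corner case $\Sigma_e^{3-i} = \varnothing$ in the first case: there I would explicitly select $F_e^i$ as the face of $D_e^i = D$ lying on the $(3-i)$-side of $e$, and verify both its existence and its disjointness from every $\Delta_T$ using the fact that all convex sides of 3-cycles containing $e$ lie on the $i$-side.
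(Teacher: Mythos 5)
Your opening reduction (that $\Delta_T\cap F_e^i$ is clopen in the connected face $F_e^i$, so one only has to exclude $F_e^i\subseteq\Delta_T$) is sound, and your first case, where $T$ contains $e$, does work: with $v'\in\Sigma_e^{3-i}\subseteq F_e^i\subseteq\Delta_T$, convexity of $\Delta_T$ and h-convexity force $\Delta_{uwv'}\subseteq\Delta_T$, and the two discs lie on opposite local sides of $e\subseteq\partial\Delta_T$, a contradiction. But that case is the easy part; the substance of the lemma is the case you leave open, namely 3-cycles $T$ of $D_e^i$ not containing $e$. There you only obtain $\Delta_{T^*}\subseteq\Delta_T$ for $T^*=\{u,w,v\}$ and correctly note that this together with $F_e^i\subseteq\Delta_T$ does not give $F_e^i\subseteq\Delta_{T^*}$; the proposed remedy (``iterative application of h-convexity'' with induction on the distance of $T$ from $C_e^i$) is not carried out, and it is unclear how it could be: h-convexity only transfers containments downward, $D[T']\subseteq\Delta_T\Rightarrow\Delta_{T'}\subseteq\Delta_T$, whereas your plan needs the opposite, that the whole face $F_e^i$ lies inside the convex side of the smaller triangle $T^*$. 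The corner case $\Sigma_e^{3-i}=\varnothing$ of your first case is likewise only sketched. So there is a genuine gap exactly where the work lies.

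For comparison, the paper never argues from the hypothesis $F_e^i\subseteq\Delta_T$. Given an arbitrary 3-cycle $uvw$ of $D_e^i$, it splits on whether some edge with both ends in $C_e^i$ crosses $uvw$. If such an edge $f$ exists and shares a vertex with $uvw$, Lemma \ref{lm:leftCycle} \ref{it:crossK4} places the crossing of the resulting crossing $K_4$ on the side away from $F_e^i$, so the convex sides of all its 3-cycles, including $uvw$, avoid $F_e^i$; if instead both ends of $f$ lie on the $F_e^i$-side of $uvw$, Definition \ref{df:convex} \ref{it:convexSide} forces the convex side of $uvw$ to be the other side. If no such edge crosses $uvw$, the key step is that for any two vertices $a,b$ of $C_e^i$ not on $e$, the $K_4$ on $a,b$ and the ends of $e$ has all four vertices incident with the common face $F_e^i$, hence is a crossing $K_4$ whose crossing is separated from $F_e^i$; consequently every triangle of the fan from an end $y$ of $e$ over the edges of $C_e^i$ has its convex side disjoint from $F_e^i$, none of the fan chords crosses $uvw$, so $uvw$ lies in one of these convex sides, and h-convexity transfers the disjointness to $\Delta_{uvw}$ (the case where $C_e^i$ is a triangle being handled by h-convexity directly). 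Nothing playing the role of this crossing-$K_4$/fan mechanism appears in your outline, so as it stands the proof is incomplete.
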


\begin{cproof}
\julythirteen{Let $uvw$ be a 3-cycle 
 in $D_e^i$.}
 \ignore{the convex side of \febFiveChange{$\Delta_T$} is
 disjoint from $F_e^i$.}
Suppose \maytwonine{first that there is an edge $f$ with both ends in $C_e^i$ that crosses $uvw$}.  
\junesix{If $f$ has one end in $uvw$, then Lemma \ref{lm:leftCycle} \ref{it:crossK4} shows the crossing} in the $K_4$ that includes $u$, $v$, $w$, and $f$ is separated from $F_e^i$ by the face-boundary 4-cycle in the $K_4$.  Therefore, the convex side of each of the four 3-cycles \julythirteen{in the $K_4$ is the side that is disjoint from $F_e^i$.  In particular, this holds for $uvw$, as required.}

\julythirteen{In the remaining case, both} vertices incident with $f$ are in the side of $uvw$ that contains $F_e^i$.  \junesix{In this case, Definition \ref{df:convex} \ref{it:convexSide} of convex side shows it is the other side,} the one disjoint from $F_e^i$, that is convex, as required.
Therefore, we can assume no edge having both ends in $C_e^i$ crosses $uvw$.  

\janThree{Suppose that $C_e^i$ has at least four vertices, and let} $a$ and $b$ \augusttwenty{be} any two vertices of $C_e^i$ such that neither is an end of $e$. As $a$, $b$, and $e$ are all incident with $F_e^i$, the $K_4$ containing $a$, $b$, and $e$ has a face incident with all four of its vertices.  It follows that this is a crossing $K_4$. Lemma \ref{lm:leftCycle} implies that the crossing in this $K_4$ is separated from $F_e^i$ by the face-bounding 4-cycle.  Thus, all the 3-cycles in this $K_4$ have their convex side disjoint from $F_e^i$.  Although this was already known for the two 3-cycles containing $e$, we now know it holds for the two 3-cycles \augusttwenty{containing the edge $ab$}.

\janThree{Finally, if $C_e^i$ has only three vertices, then the result follows from h-convexity.}  Otherwise, let $y$ be one of the ends of $e$ and consider $C_e^i$ together with all the chords from $y$.  By the preceding paragraph, all of the 3-cycles using two of these \marchTwentysix{edges incident with $y$} and an edge of $C_e^i$ have their convex side disjoint from $F_e^i$.  From the earlier discussion, none of these chords crosses $uvw$.  It follows that $uvw$ is contained in the convex side of one of them; this convex side is disjoint from $F_e^i$.  Thus, the chosen convex side for $uvw$ is, by h-convexity, disjoint from $F_e^i$.\end{cproof}

The remaining detail about h-convex drawings we need is that $D_e^1\subseteq F_e^2$ and $D_e^2\subseteq F_e^1$.   \augfifteen{As mentioned after the proof Lemma \ref{lm:leftCycle}, $C_e^i$ is the boundary of the face $F_e^i$ of $D_e^i$.  The other closed disc in the sphere bounded by $D[C_e^i]$ is denoted $\Delta_e^i$.  Evidently, $D[\Sigma_e^i]\subseteq \Delta_e^i$.  We \janThree{begin by showing that} $\Sigma_e^2\cap \Delta_e^1=\varnothing$}.

\begin{lemma}\label{lm:no2VxInC1} Let $D$ be an h-convex drawing of $K_n$ with \deceight{a specified} witnessing set 
\febFiveChange{of convex sides}.  Let $e=uv$ be \augfifteen{an edge} of $K_n$ and $i\in \{1,2\}$.  \julythirteen{For $w\in V(G)\setminus\{u,v\}$, if $D[w]\subseteq \Delta_e^i$, then $w\in \Sigma_e^i$.} \end{lemma}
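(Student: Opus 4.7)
The plan is to argue directly from the hypothesis $D[w]\subseteq\Delta_e^i$. First I would observe that, since $D$ is h-convex and hence convex (Definition \ref{df:convex}), the 3-cycle on $\{u,v,w\}$ has a convex side in the witnessing set $\mathcal{C}$, and this side lies locally on either the left or the right of the oriented edge $e$. That is exactly what the definition of $\Sigma_e^1$ and $\Sigma_e^2$ records, so the vertices of $K_n$ not incident with $e$ are partitioned between $\Sigma_e^1$ and $\Sigma_e^2$. Hence it suffices to derive a contradiction from the alternative $w\in\Sigma_e^{3-i}$.

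Next I would invoke Lemma \ref{lm:leftCycle}\,\ref{it:eCrossFree}, which is available because we may assume $|\Sigma_e^i|\ge 1$ (otherwise $C_e^i$ and $\Delta_e^i$ are not even defined and the lemma holds vacuously). That lemma produces the face $F_e^i$ of $D_e^i$ that is incident with $e$ and contains the drawing of every vertex of $\Sigma_e^{3-i}$. So if $w$ belonged to $\Sigma_e^{3-i}$ we would have $D[w]\in F_e^i$.

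Finally, $\Delta_e^i$ was defined just before the statement of the lemma as the closed disc bounded by $D[C_e^i]$ opposite to $F_e^i$, so $\Delta_e^i\cap F_e^i=\varnothing$. This contradicts $D[w]\subseteq\Delta_e^i$ and forces $w\in\Sigma_e^i$. The proof poses no substantive obstacle: all of the structural work has already been done in Lemma \ref{lm:leftCycle}, and what remains is to combine the dichotomy $\Sigma_e^1\cup\Sigma_e^2=V(K_n)\setminus\{u,v\}$ with the fact that $F_e^i$ and $\Delta_e^i$ partition the sphere.
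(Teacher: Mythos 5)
Your argument is circular. Its only substantive step is the claim that $w\in\Sigma_e^{3-i}$ forces $D[w]$ to lie in $F_e^i$, which you extract from the phrase ``containing $\Sigma_e^{(3-i)}$'' in Lemma \ref{lm:leftCycle}\,\ref{it:eCrossFree}. But since $F_e^i$ is exactly the complement of $\Delta_e^i$ in the sphere, the assertion ``every vertex of $\Sigma_e^{3-i}$ is drawn in $F_e^i$'' is literally the contrapositive of Lemma \ref{lm:no2VxInC1}, and it is not established before this lemma. The proof of Lemma \ref{lm:leftCycle} only shows that $e$ is uncrossed in $D_e^i$ (plus the crossing-$K_4$ facts in its part (ii)); the words ``the one containing $\Sigma_e^{(3-i)}$'' merely serve to name one of the two faces of $D_e^i$ incident with $e$ --- in effect, the face on the side of $e$ toward which the chosen convex sides of the triangles through $e$ with apex in $\Sigma_e^{3-i}$ locally open. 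Where the vertices of $\Sigma_e^{3-i}$ are actually drawn relative to $D_e^i$ is precisely the question at hand: a priori such a vertex could sit deep inside $\Delta_e^i$, and ruling this out genuinely needs h-convexity, which your proposal never uses beyond the trivial partition $V(K_n)\setminus\{u,v\}=\Sigma_e^1\cup\Sigma_e^2$. If the containment you cite were already available, the present lemma would be vacuous and the paper would not prove it at length.

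The missing content is what the paper's proof supplies. If $C_e^1$ is a 3-cycle, then (by Lemma \ref{lm:SigmaIsF}) its convex side is $\Delta_e^1$, and h-convexity applied to $w\in\Delta_e^1$ places the chosen convex side of $uvw$ inside $\Delta_e^1$, giving $w\in\Sigma_e^1$. Otherwise $\Delta_e^1$ is covered by the crossing sides of the crossing $K_4$'s $J_{ab}$ induced by $e$ and the edges $ab$ of $C_e^1-e$; for the $J_{ab}$ whose crossing side contains $D[w]$, the convex sides of its 3-cycles are determined, and one argues via h-convexity (and, in the remaining case, via the crossing $K_4$ on $a,u,v,w$) that the convex side of $uvw$ lies on the $\Sigma_e^1$ side of $e$. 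Some argument of this kind, engaging the witnessing convex sides, is unavoidable; as it stands your proof reduces the lemma to an unproved restatement of itself.
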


\begin{cproof} 
\janThree{Suppose that $D[w]\subseteq\Delta_e^1$.}
\janFourteen{If $C_e^1$ is a $3$-cycle, then its convex side  $\Delta_e^1$ contains $w$; h-convexity  implies that  
the $3$-cycle including $w$ and $e$  is contained in $\Delta_e^1$, and hence $w\in \Sigma_e^1$. Thus, we may assume that $C_e^1$ has length at least $4$.}

\janThree{ For each edge $ab\in C_e^1-e$, let $J_{ab}$ denote the crossing $K_4$ in $D$ induced by $e$ and $ab$.} 
\janThree{
The closed disc $\Delta_e^1$ is the union of the crossing sides of the $J_{ab}$, so $D[w]$ is contained in the crossing side of some $J_{ab}$. Since $J_{ab}$ is a crossing $K_4$, the convex sides of all the 3-cycles in $J_{ab}$ are determined.}

\janThree{If $D[w]$ is contained in the convex side of one of the 3-cycles $D[auv]$ or $D[buv]$, then it follows from h-convexity that this side contains the convex side of $D[wuv]$ that is in $\mathcal{C}$, and thus $w\in\Sigma_e^1$. Therefore we may assume that $D[w]$ is contained in the convex sides of both $D[abu]$ and $D[abv]$. Consequently, $D[wu]$ is contained in the convex side of $D[abu]$ and $D[wv]$ is contained in the convex side of $D[abv]$. The $K_4$ with vertices $a,w,u,v$ has a crossing in $D$ and determines the convex side of the 3-cycle containing $w$ and $e$, and therefore shows that $w\in\Sigma_e^1$.}
 \end{cproof}




Next we move on to edges.  The following result is preparatory to showing edges of $D_e^1$ and $D_e^2$ do not cross.

\begin{lemma}\label{lm:planarK4} Let $D$ be an h-convex drawing of $K_n$ with \deceight{a specified} \febSixteenChange{witnessing set of} convex sides.  
\febFiveChange{If,} for $i=1,2$, $x_i\in \Sigma_e^i$, then the 3-cycles induced by $x_1,e$ and $x_2,e$ do not cross in $D$. \end{lemma}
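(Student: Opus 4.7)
The only edge pairs of $T_1=x_1uv$ and $T_2=x_2uv$ that do not share a vertex are $\{x_1u,x_2v\}$ and $\{x_1v,x_2u\}$, so any crossing between $T_1$ and $T_2$ must come from one of these pairs.  By the $u\leftrightarrow v$, $x_1\leftrightarrow x_2$ symmetry of the hypothesis I may assume $x_1u$ and $x_2v$ cross at a point $p$.  If additionally $x_1v$ crosses $x_2u$ at some $q$, then the sub-arcs of $x_2v$ from $p$ to $v$ and of $x_2u$ from $q$ to $u$ are two disjoint arcs in $\overline{\Delta_{T_1}}$ with interiors in the open disc $\Delta_{T_1}^{o}$, and their four endpoints $u,v,q,p$ cyclically interleave on $T_1$ (whose cyclic order is $u,v,q,x_1,p$), forcing the arcs to cross inside $\Delta_{T_1}$; but they are sub-arcs of $x_2v$ and $x_2u$, which meet only at $x_2\notin\Delta_{T_1}$.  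So I may take $p$ to be the unique crossing between $T_1$ and $T_2$.

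The first step is convexity.  If $x_1\in\Delta_{T_2}$, then convexity of $\Delta_{T_2}$ together with $u\in T_2\subseteq\Delta_{T_2}$ forces $D[x_1u]\subseteq\Delta_{T_2}$; but $x_1u$ crosses $x_2v\subseteq\partial\Delta_{T_2}$ transversally at $p$, so $x_1u$ must exit $\Delta_{T_2}$.  Hence $x_1\notin\Delta_{T_2}$, and symmetrically $x_2\notin\Delta_{T_1}$.

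The second step derives the opposite conclusion $x_1\in\Delta_{T_2}$ by a topological argument.  Since $x_2\in \mathbb{S}^2\setminus\Delta_{T_1}$ and $x_2v\cap T_1=\{p,v\}$, the sub-arc $\beta$ of $x_2v$ from $p$ to $v$ is a chord of $\overline{\Delta_{T_1}}$ with interior in $\Delta_{T_1}^{o}$, splitting $\Delta_{T_1}$ into two regions: $R_e$, bounded by $e$, the sub-arc of $x_1u$ from $u$ to $p$, and $\beta$; and $R_{x_1}$, bounded by $vx_1$, the sub-arc of $x_1u$ from $x_1$ to $p$, and $\beta$.  Now $R_e$ contains the local left-neighborhood of $e$, which lies outside $\Delta_{T_2}$ (since $\Delta_{T_2}$ contains the local right-neighborhood of $e$ by hypothesis); as $R_e^{o}$ is connected and, in this one-crossing setting, disjoint from $T_2$, we conclude that $R_e^{o}$ is contained in the open complement of $\Delta_{T_2}$.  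Because $\beta\subseteq T_2$ separates $R_e$ from $R_{x_1}$, this forces $R_{x_1}^{o}\subseteq\Delta_{T_2}^{o}$, and hence $x_1\in\partial R_{x_1}\subseteq\overline{R_{x_1}}\subseteq\Delta_{T_2}$, contradicting Step~1.  The main obstacle is the careful topological setup---verifying that $\beta$ is genuinely a chord of $\Delta_{T_1}$ and that $R_e^{o}$ avoids $T_2$---both of which depend on the ``at most one common point'' hypothesis for edges of the drawing and the reduction to a single crossing.
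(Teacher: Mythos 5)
Your proof is correct, and it reaches the same contradiction as the paper---a crossing between the two triangles would force $x_1$ and $x_2$ onto the same side of $e$---but by a more self-contained route. The paper's own proof is three sentences: it passes to the $K_4$ $J$ on $\{x_1,x_2,u,v\}$, notes that a crossing between $T_1$ and $T_2$ makes $J$ a crossing $K_4$ in which $e$ is uncrossed, and then invokes the standing observation (stated just before Lemma \ref{lm:leftCycle}) that in a crossing $K_4$ each 3-cycle's convex side is forced to be the side avoiding the fourth vertex; in that configuration both forced sides lie locally on the same side of $e$, contradicting $x_1\in\Sigma_e^1$, $x_2\in\Sigma_e^2$. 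You never use the edge $x_1x_2$ or the classification of good drawings of $K_4$: your interleaving argument re-proves that at most one of the pairs $\{x_1u,x_2v\}$, $\{x_1v,x_2u\}$ can cross, your Step~1 re-derives the forced-convex-side observation directly from Definition \ref{df:convex} \ref{it:convexSide}, and your Step~2 is an explicit Jordan-curve verification of the ``same side of $e$'' assertion that the paper leaves to the reader. What your version buys is independence from the crossing-$K_4$ facts; what it costs is length. One small presentational point: in your first paragraph $\Delta_{T_1}$ must be read as the closed side of $T_1$ not containing $x_2$ (the claims ``interiors in $\Delta_{T_1}^{o}$'' and ``$x_2\notin\Delta_{T_1}$'' are not yet available for the $\mathcal C$-convex side at that stage); alternatively, Step~1 only uses the crossing $p$ and so can be performed before the uniqueness reduction, after which the two readings coincide. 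Either fix is routine and does not affect correctness.
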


\begin{cproof}
Let $J$ be the $K_4$ induced by $x_1,x_2,e$ and, for $i=1,2$, let $T_i$ be the 3-cycle induced by $x_i,e$.  If $D[J-x_1x_2]$ has a crossing, then $T_1$ and $T_2$ cross but $e$ is not crossed.  \maytwonine{The contradiction is} that $x_1$ and $x_2$ are on the same side of $e$.  
\end{cproof}

We are now ready for the next major step towards the proof of Lemma \ref{lm:disjtDelta}.

\begin{lemma}\label{lm:no2edgeInC1} Let $D$ be an h-convex drawing of $K_n$ with \deceight{a specified} witnessing set of  convex sides. Then no edge of $D_e^2$ crosses any edge of $D_e^1$. \end{lemma}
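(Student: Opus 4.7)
The plan is to argue by contradiction. Suppose $f_1 \in D_e^1$ crosses $f_2 \in D_e^2$ at an interior point. Lemma \ref{lm:leftCycle}\ref{it:eCrossFree} excludes $f_1=e$ and $f_2=e$, so $f_1$ has an endpoint $a_1 \in \Sigma_e^1$ and $f_2$ has an endpoint $a_2 \in \Sigma_e^2$; since a shared endpoint would preclude an interior crossing, $f_1$ and $f_2$ are vertex-disjoint.

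Next I would apply Lemma \ref{lm:planarK4} to the 3-cycles $T_1 = a_1uv$ and $T_2 = a_2uv$: these meet only on $D[e]$, so $T_1 \cup T_2$ is a theta-graph on $\{u,v,a_1,a_2\}$ with three internally disjoint $uv$-paths $e$, $P_1 = ua_1v$, and $P_2 = ua_2v$, partitioning $\sphere$ into three faces $F_{e,P_1}$, $F_{e,P_2}$, $F_{P_1,P_2}$. Using Lemma \ref{lm:SigmaIsF} on the convex sides (which lie in $\Delta_e^1$ and $\Delta_e^2$ respectively), combined with Lemma \ref{lm:no2VxInC1} to rule out the theta-face containing the other $a_j$, I identify $\Delta_{T_1} = \overline{F_{e,P_1}}$ and $\Delta_{T_2} = \overline{F_{e,P_2}}$. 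In particular, $\Delta_{T_1} \cap \Delta_{T_2} = D[e]$.

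The heart of the plan is to show the two containments $f_1 \subseteq \Delta_{T_1}$ and $f_2 \subseteq \Delta_{T_2}$; these then force $f_1 \cap f_2 \subseteq D[e]$, and since $f_1$ and $f_2$ share no endpoint (each meets $D[e]$ in at most one of $\{u,v\}$, and not the same one), we obtain $f_1 \cap f_2 = \varnothing$, contradicting the assumed crossing. When the other endpoint $b_i$ of $f_i$ lies in $\{u,v\}$, the containment $f_i \subseteq T_i \subseteq \Delta_{T_i}$ is immediate; otherwise $b_i \in \Sigma_e^i \setminus \{a_i\}$, and by the convexity of $\Delta_{T_i}$ (Definition \ref{df:convex}\ref{it:convexSide}) it suffices to show $b_i \in \Delta_{T_i}$.

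The main obstacle is precisely this last point: a priori $b_i$ could lie in the middle theta-face $F_{P_1,P_2}$, which is in $\Delta_e^i$ but not in $\Delta_{T_i}$. The plan for overcoming this is to choose the designated endpoint $a_i$ adaptively among the two endpoints of $f_i$ rather than fixing it at the outset. Concretely, the f-convex structure of $D_e^i$ given by Lemma \ref{lm:SigmaIsF} (equivalently, the pseudolinearity of $D_e^i$) ensures that among the endpoints of $f_i$ in $\Sigma_e^i$ there is an ``extreme'' one, call it $a_i$, such that the other endpoint of $f_i$ lies in $\overline{F_{e,P_i}} = \Delta_{T_i}$; this follows because the edges from $u$ in the pseudolinear drawing $D_e^i$ form a fan that orders $\Sigma_e^i$ radially around $u$, and the one closer to the outer boundary $C_e^i$ sees the other through its convex side. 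Once this choice has been made for both $f_1$ and $f_2$, the argument of the previous paragraph produces the desired contradiction.
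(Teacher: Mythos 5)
Your opening is sound: Lemma \ref{lm:planarK4} does give the theta-graph on $e$, $P_1=ua_1v$, $P_2=ua_2v$, and Lemma \ref{lm:SigmaIsF} (placing $\Delta_{T_i}$ inside $\Delta_e^i$) together with Lemma \ref{lm:no2VxInC1} does yield $a_2\notin\Delta_{T_1}$, $a_1\notin\Delta_{T_2}$, hence $\Delta_{T_1}\cap\Delta_{T_2}=D[e]$. The genuine gap is exactly at the step you call the heart: the containment $f_i\subseteq\Delta_{T_i}$ cannot be arranged by choosing the designated endpoint, and the ``fan''/extreme-endpoint claim you invoke is false. Suppose $f_1=a_1b_1$ has both ends in $\Sigma_e^1$ and the $K_4$ induced by $\{u,v,a_1,b_1\}$ is a crossing $K_4$ in $D_e^1$ with $ua_1$ crossing $vb_1$; this already happens in a rectilinear drawing with the four points in convex position in the cyclic order $b_1,u,v,a_1$ and $e=uv$ on the outer boundary, so it is fully compatible with the f-convexity of $D_e^1$. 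In that configuration the convex side of $a_1uv$ is the side not containing $b_1$ and the convex side of $b_1uv$ is the side not containing $a_1$, so neither choice of endpoint places the other end (let alone $f_1$) in the corresponding $\Delta_{T_i}$. Worse, no triangle through $e$ need contain $f_1$ in its convex side at all (take $\Sigma_e^1=\{a_1,b_1\}$), so ``contain each $f_i$ in the convex side of one triangle on $e$'' is not merely unproven but impossible; the correct container is $\Delta_e^i$ itself, and the disjointness statement you would then need is Lemma \ref{lm:disjtDelta}, whose proof depends on the very lemma you are proving, so invoking it would be circular.

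The paper's proof avoids any such containment. After using Lemma \ref{lm:planarK4} to assume that neither end of $x_1y_1$ is an end of $e$ (and that $x_2\in\Sigma_e^2$), it fixes the $K_4$ $J_1$ induced by $x_1,y_1$ and $e$, argues via Lemma \ref{lm:planarK4} that the only edge of $J_1$ the arc $D[x_2y_2]$ can cross is $D[x_1y_1]$, uses Lemma \ref{lm:no2VxInC1} to place $D[x_2]$ (and $D[y_2]$, when $y_2$ is not an end of $e$) in the face $F$ of $D[J_1]$ containing $F_e^1$, concludes that $J_1$ is a crossing $K_4$ with $x_1y_1$ incident with $F$, and then observes that immediately after crossing $D[x_1y_1]$ the arc $D[x_2y_2]$ lies in a face of $D[J_1]$ incident with neither end of $e$ and different from $F$, so it would need a second crossing with $D[J_1]$, which is impossible. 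If you want to rescue your plan, the crossing-$K_4$ case would force you to replace the triangle $T_i$ by the crossing side of the face-bounding $4$-cycle of $\{u,v,a_i,b_i\}$ and re-derive its disjointness from the other side --- at which point you are essentially reconstructing the paper's argument rather than shortcutting it.
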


\begin{cproof}  By way of contradiction, suppose some edge \janThree{$D[x_2y_2]$} of $D_e^2$ crosses some edge \janThree{$D[x_1y_1]$} of $D_e^1$.  Lemma \ref{lm:planarK4} implies not both $\{x_1,y_1\}$ and $\{x_2,y_2\}$ can contain an end of $e$.  Without loss of generality, we assume neither $x_1$ nor $y_1$ is an end of $e$.  Furthermore, $x_2y_2\ne e$, so we may choose the labelling such that $x_2$ is not an end of $e$.
Let $J_1$ be the $K_4$ induced by $x_1,y_1,e$.   

\janThree{Lemma \ref{lm:planarK4} implies that $x_2y_2$ does not cross any edge of $J_1$ incident with an end of $e$, so the only crossing of $x_2y_2$ with $J_1$ is with $x_1y_1$. Let $F$ be the face of $D[J_1]$ containing $F_e^1$. Lemma \ref{lm:no2VxInC1} shows that $D[x_2]\in F$, and similarly that $D[y_2]\in F$ if $y_2$ is not an end of $e$. As we traverse $D[x_2y_2]$ from $D[x_2]$, we cross $D[x_1y_1]$ once, and cross nothing else in $D[J_1]$. Therefore, $D[x_1y_1]$ is incident with $F$, as is $D[e]$. Since the face $F$ of $D[J_1]$ is incident with all four vertices of $J_1$, it follows that $J_1$ is a crossing $K_4$ in $D$.}

\janThree{Now, just after we traverse $D[x_2y_2]$ across $D[x_1y_1]$, we are in a face of $D[J_1]$ incident with the crossing of $D[J_1]$ and with both $D[x_1]$ and $D[y_1]$. This face is not incident with either end of $e$, nor is it equal to $F$. But, $y_2$ is either an end of $e$ or $D[y_2]$ lies in $F$, so $D[x_2y_2]$ must cross $D[J_1]$ a second time, which is a contradiction.}\end{cproof}






{We conclude} our study of h-convex drawings with the proof of Lemma~\ref{lm:disjtDelta}.

\begin{cproofof}{Lemma \ref{lm:disjtDelta}}    
\junesix{As in the paragraph immediately following the proof of Lemma \ref{lm:leftCycle}, for $i=1,2$, let $C_e^i$ be the cycle in $D_e^i$ that is the boundary of $F_e^i$.  Furthermore, let $\Delta_e^i$ be the closed disc bounded by $C_e^i$ that contains $D[\Sigma_e^i]$.}

\augusttwenty{The result is an application of the following simple fact about curves in the sphere.}  

\decTwoOhTwoOh{\begin{observation}\label{Observation} Let $\Delta_1$ and $\Delta_2$ be closed discs in the sphere bounded by simple closed curves $\gamma_1$ and $\gamma_2$, respectively. If: 
\begin{itemize} \item  $\gamma_1\cap \gamma_2$ is an arc (or empty); and \item $\gamma_1\not\subseteq \Delta_2$ and $\gamma_2\not\subseteq \Delta_1$,
\end{itemize}
then $\Delta_1\cap \Delta_2=\gamma_1\cap \gamma_2$.
\end{observation}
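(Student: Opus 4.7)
The containment $\gamma_1\cap\gamma_2\subseteq\Delta_1\cap\Delta_2$ is immediate from $\gamma_i\subseteq\Delta_i$. My plan for the reverse containment is to identify, in each case, which of the two closed discs bounded by $\gamma_i$ equals $\Delta_i$, and then read off the intersection directly. I will split the argument according to whether $\gamma_1\cap\gamma_2$ is empty or is a non-degenerate arc.

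If $\gamma_1\cap\gamma_2=\varnothing$, the connected set $\gamma_1$ lies in a single component of $\sphere\setminus\gamma_2$; the assumption $\gamma_1\not\subseteq\Delta_2$ pins it down to $\sphere\setminus\Delta_2$, and symmetrically $\gamma_2\subseteq\sphere\setminus\Delta_1$. A hypothetical point $x\in\Delta_1\cap\Delta_2$ would then lie in $\operatorname{int}(\Delta_1)\cap\operatorname{int}(\Delta_2)$, since each $\gamma_i$ misses $\Delta_{3-i}$. Because $\operatorname{int}(\Delta_1)$ is a connected open set disjoint from $\gamma_2$, it would be forced inside the single component $\operatorname{int}(\Delta_2)$ of $\sphere\setminus\gamma_2$ that it meets, yielding $\Delta_1\subseteq\Delta_2$ and thus $\gamma_1\subseteq\Delta_2$, a contradiction.

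If $\sigma:=\gamma_1\cap\gamma_2$ is an arc with endpoints $p,q$, I will denote by $\gamma_i'$ the closure of $\gamma_i\setminus\sigma$, so that $\gamma_1'\cup\gamma_2'\cup\sigma$ is a theta graph in $\sphere$. Applying the Jordan curve theorem to $\gamma_1$ and then cutting the face of $\gamma_1$ that contains $\gamma_2'$ along that arc, the complement of the theta graph has exactly three open-disc components $F_{12},F_{1\sigma},F_{2\sigma}$, each labelled by the pair of arcs that bounds it. The two closed discs bounded by $\gamma_i=\gamma_i'\cup\sigma$ are then $\overline{F_{i\sigma}}$ and its complement; the latter contains $\gamma_{3-i}'$ and hence $\gamma_{3-i}$, so the hypothesis $\gamma_{3-i}\not\subseteq\Delta_i$ forces $\Delta_i=\overline{F_{i\sigma}}$ for each $i$. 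Since $F_{1\sigma}\cap F_{2\sigma}=\varnothing$ and each of $\gamma_1'$, $\gamma_2'$ meets the other boundary only at $\{p,q\}\subseteq\sigma$, inspecting $\overline{F_{1\sigma}}\cap\overline{F_{2\sigma}}$ yields exactly $\sigma$.

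The main obstacle I anticipate is the bookkeeping for the theta graph: writing down cleanly why $\sphere\setminus(\gamma_1'\cup\gamma_2'\cup\sigma)$ has exactly the three listed components and verifying which pair of arcs bounds each. Once that is in place, the pinning down of $\Delta_i$ from the two available choices, and the final computation of $\overline{F_{1\sigma}}\cap\overline{F_{2\sigma}}$, are routine.
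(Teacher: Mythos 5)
Your argument is correct. There is, however, nothing in the paper to compare it against: the statement appears as Observation \ref{Observation} inside the proof of Lemma \ref{lm:disjtDelta} and is simply asserted as a ``simple fact about curves in the sphere'' with no proof given, so your write-up supplies the routine verification the authors omitted. Both halves check out: in the disjoint case, connectivity pins $\gamma_1$ into $\mathbb{S}^2\setminus\Delta_2$ and $\gamma_2$ into $\mathbb{S}^2\setminus\Delta_1$, and any common point would force $\operatorname{int}\Delta_1\subseteq\operatorname{int}\Delta_2$ and hence $\gamma_1\subseteq\Delta_2$, a contradiction; in the arc case, the theta-graph $\gamma_1'\cup\gamma_2'\cup\sigma$ has exactly the three faces $F_{12},F_{1\sigma},F_{2\sigma}$ you describe, the hypotheses force $\Delta_i=\overline{F_{i\sigma}}$, and $\overline{F_{1\sigma}}\cap\overline{F_{2\sigma}}=\sigma$ follows since the faces are disjoint from each other and from the graph, while $(\gamma_1'\cup\sigma)\cap(\gamma_2'\cup\sigma)=\sigma$. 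Two small points to tidy: (i) the second closed disc bounded by $\gamma_i$ is $\mathbb{S}^2\setminus F_{i\sigma}$, the complement of the \emph{open} face, not the complement of $\overline{F_{i\sigma}}$ (which is open); your phrase ``and its complement'' should be read that way. (ii) Your theta-graph case tacitly assumes $\sigma$ is a non-degenerate arc with two distinct endpoints; if ``arc'' were allowed to be a single point the theta graph degenerates, but that case is handled verbatim by your empty-intersection argument (and in the paper's application $\sigma$ is a drawn edge, hence non-trivial), so this is only a matter of stating the convention.
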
}

\augusttwenty{Lemmas \ref{lm:no2VxInC1} and \ref{lm:no2edgeInC1} imply that, for $\{i,j\}=\{1,2\}$, the open arc \decTwoOhTwoOh{$D[C_e^i]\setminus D[e]$} is disjoint from $\Delta_e^j$.  The result is an immediate application of Observation \ref{Observation}.}
\end{cproofof}


\ignore{We now have all the information we need about h-convex drawings in order to prove (ii) implies (iii).    \mayfour{The proof inductively grows an ever larger set of curves satisfying \ref{it:psCrossings}.  The construction quite easily implies the existence of a set of curves for all the edges satisfying \ref{it:psCrossings}; that is, showing h-convex is pseudospherical.  We present this part of the construction in the remainder of this section.  That we can find the next curve to also satisfy \ref{it:psCrossings} is rather more technical and given in the next section.}

\mayfour{Because we are preparing for obtaining \ref{it:psCrossings}, the lemmas that follow are more general than required to obtain \ref{it:psCrossings}.
 
Let $e_1,e_2,\dots,e_{\binom n2}$ be} an arbitrary ordering of the edges of $K_n$.  We iteratively show the existence of simple closed curves $\delta_i$, a different one for each edge, such that $D[e_i]\subseteq \delta_i$ and the set of $\delta_i$'s satisfies \ref{it:psVertex}, \ref{it:psEdge}, and \mayfour{either \ref{it:psCrossings} or} \ref{it:psCrossings}.     \redchange{Moreover, for each $j\in\{1,2,\dots,\binom n2\}$ and any edge $e$ of $K_n$,  the intersection of $\delta_j$ with \sept{(the closed arc)} $D[e]$ is either\sept{:} a crossing with the interior of $D[e]$\sept{;} a single vertex that is incident with both $e$ and $e_j$\sept{;} or \sept{$D[e]$} and $e=e_j$.}

For distinct elements $i$ and $j$ of  $\{1,2,\dots,\binom n2\}$, label each vertex of $C_{e_{i}}^1$ with 1, 2, 3, respectively to indicate it is in $\Sigma_{e_j}^1$, in $\Sigma_{e_j}^2$, or incident \febFiveChange{with $e_j$}.  

\begin{lemma}\label{lm:noInterlace}  \mayfour{Suppose $D$ is an h-convex drawing of $K_n$, with $e_i,e_j$ and the labelling of $C^1_{e_i}$ as in the preceding sentence.  Then t}here is no $1,2,1,2$ pattern in the cyclic order around $C_{e_{i}}^1$.  \end{lemma}

\begin{cproof}  
Let $v_1,v_2,v_3,v_4$ be the four vertices \febSixteenChange{of $C_{e_{i}}^1$ in} this cyclic order  with $v_1,v_3$ having label 1 and $v_2,v_4$ having label 2.  As all  $v_i$ are in $\Sigma_{e_{i}}^1$, the definition of $C_{e_{i}}^1$ implies $v_1v_3$ crosses $v_2v_4$ in $D_{e_{i}}^1$.  

However, the edge $v_1v_3$ is in \febFiveChange{$D_{e_j}^1$}, while $v_2v_4$ is in \febFiveChange{$D_{e_j}^2$}.  Lemma \ref{lm:no2edgeInC1} implies they do not cross, a contradiction.
\end{cproof}
 
}

\section{Proof of Theorem \ref{th:main}}\label{sec:hCxHasExactExtension}

In this section, \decTwoOhTwoOh{we prove \ref{it:Dhc}$\Rightarrow$\ref{it:Dps}:} an h-convex drawing of $K_n$ has simple closed curve extensions of the edges satisfying \ref{it:psVertex}, \ref{it:psCrossings}, and \ref{it:psEdge}.  
\decTwoOhTwoOh{This completes the proof of Theorem \ref{th:main}.}

The proof  iteratively constructs the \octthirteen{set  of} simple closed curve extensions of the edges. We assume that, \octthirteen{for some $J\subset E(K_n)$ and for all $e\in J$, there exist extensions $\gamma_e$ satisfying (PS1), \ref{it:psCrossings}, (PS3), and a fourth property (PS4):}
\begin{enumerate}[leftmargin=\widthof{\textbf{(PS4)}}+\labelsep+\labelsep]
\item[\textbf{(PS4)}] \janThree{For \maytwonine{each $e\in J$}} \janFourteen{and each  $e'\in E(K_n)\setminus\{e\}$}, \janThree{ $\gamma_e$ intersects} \janFourteen{the}  \janThree{closed edge} \janFourteen{$D[e']$} \janThree{at most once, and, if it exists, the point of intersection is either a crossing or a vertex incident with both $e$ and $e'$.}
\end{enumerate}
\janThree{Notice that if $J=E(K_n)$, then the extensions of the edges in $J$ automatically satisfy (PS4) provided they satisfy (PS1), \ref{it:psCrossings}, (PS3).} \janFourteen{The extra assumption (PS4) is required for inductive purposes.}


\janThree{\octthirteen{We pick any $e_0\in E(K_n)\setminus J$; the extension of $\gamma_{e_0}$ is obtained as a result of Theorem \ref{thm:sweeping}.  Thus, we need to find the two initial curves $\gamma_{e_0}^1$ and $\gamma_{e_0}^2$ satisfying the hypotheses of Theorem \ref{thm:sweeping} with respect to \deceight{$\{\gamma_e\mid e\in J\}$}.  The curve $\gamma_{e_0}^i$ contains $D[e_0]$ and is completed by an arc joining the ends of $e_0$ that \maythreeone{is in $F_{e_0}$} and ``very near'' the path $C_{e_0}^i-e_0$\deceight{; this is where we use the specified convex sides  of an h-convex drawing}. Both curves are contained in $D[e_0]\cup F_{e_0}$.} How ``near'' is ``very near'' will depend on the curves that are already determined. Our next lemma is the crucial point; Corollary \ref{co:existFirstCurve} provides $\gamma_{e_0}^1$ and $\gamma_{e_0}^2$.}



\begin{lemma}\label{lm:onlyTwoIntersections} Let $D$ be an h-convex drawing of $K_n$ with witnessing set $\mathcal C$ of convex sides.
Let $J\subseteq E(K_n)$ \mayfour{and suppose that, for each \aprilEight{$e\in J$}, there is a simple closed curve $\gamma_e$ in $D[e]\cup F_e$} \julythirteen{containing $D[e]$}, \janThree{and such that the extensions $\{\gamma_e\mid e\in J\}$ satisfy \deceight{\ref{it:psVertex} and} (PS4).}
\mayfour{If \augusttwenty{$e_0\in E(K_n) \setminus J$}}, then\octthirteen{, for any \deceight{$i\in\{1,2\}$ and any} sufficiently small neighbourhood $N$ of \deceight{$D[C_{e_0}^i]$} \deceight{in $F_{e_0}\cup D[C_{e_0}^i]$}, there are at most two arcs in \deceight{$(\gamma_e\cap N)\setminus D[C_{e_0}^i]$}.}
\augfifteen{Furthermore, at most one of these segments is contained in $D[e]$.}
\end{lemma}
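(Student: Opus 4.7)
Write $\gamma_e = D[e]\cup\delta_e$ where $\delta_e\subseteq F_e$ is the simple arc from one endpoint $u$ of $e$ to the other endpoint $v$. Since $N\setminus D[C_{e_0}^i]\subseteq F_{e_0}$, every arc of $(\gamma_e\cap N)\setminus D[C_{e_0}^i]$ lies in $F_{e_0}$; and by \ref{it:psVertex} the only vertices of $K_n$ on $\gamma_e$ are $u$ and $v$, so the closure-endpoints of every component of $\gamma_e\cap F_{e_0}$ lie on $\partial F_{e_0}=(C_{e_0}^1-e_0)\cup(C_{e_0}^2-e_0)$.

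The containment $\gamma_e\subseteq D[e]\cup F_e$ implies $\gamma_e$ avoids the interiors of $\Delta_e^1$ and $\Delta_e^2$, so as a simple closed curve $\gamma_e$ separates $\Sigma_e^1$ from $\Sigma_e^2$ in the sphere. Combined with property (PS4) and a parity argument, this shows that any edge $f\ne e$ transversely crosses the interior of $\gamma_e$ at exactly one point iff $f$ joins a vertex of $\Sigma_e^1$ to one of $\Sigma_e^2$; otherwise $\gamma_e$ meets $D[f]$ only at a shared vertex of $\{u,v\}$. Applying Lemma~\ref{lm:noInterlace} with $e'=e$, the labeling of $V(C_{e_0}^i)$ by $\Sigma_e^1$ (label $1$), $\Sigma_e^2$ (label $2$), and $\{u,v\}$ (label $3$) has no $1,2,1,2$ pattern cyclically, so $C_{e_0}^i-e_0$ contains at most two edges with one endpoint of label $1$ and the other of label $2$; these are the only edges of $C_{e_0}^i-e_0$ that $\gamma_e$ transversely crosses in the interior.

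To bound the components of $(\gamma_e\cap N)\setminus D[C_{e_0}^i]$ for $N$ sufficiently small, I would count the ``entries'' of $\gamma_e$ into the $F_{e_0}$-side of $D[C_{e_0}^i]$. The transverse crossings contribute at most two entries. At each shared vertex $w\in\{u,v\}\cap V(C_{e_0}^i)$, a careful local analysis of the rotation at $w$---comparing the unique $F_e$-sector at $w$ through which $\delta_e$ must enter $F_e$ (namely, the sector opposite $e$ delimited by the extremal edges of $C_e^1$ and $C_e^2$ at $w$) with the sectors at $w$ defined by $C_{e_0}^i$---shows that at most one of the two $\gamma_e$-rays at $w$ enters the $F_{e_0}$-side of $D[C_{e_0}^i]$. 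Pairing these at most four entries consistently along $\gamma_e$, using that $\gamma_e$ is simple, yields at most two components of $(\gamma_e\cap N)\setminus D[C_{e_0}^i]$.

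For the ``at most one in $D[e]$'' assertion, if both endpoints of $e$ lie in $\Sigma_{e_0}^i\cup\{u_0,v_0\}$ or both in $\Sigma_{e_0}^{3-i}\cup\{u_0,v_0\}$, then Lemma~\ref{lm:disjtDelta} gives $D[e]\cap F_{e_0}=\varnothing$ and no arc is contained in $D[e]$. Otherwise, $D[e]$ is a simple arc with one end in $\Delta_{e_0}^1$ and one in $\Delta_{e_0}^2$, and the two-crossings bound combined with the simplicity of $D[e]$ forces at most one sub-arc of $D[e]\cap F_{e_0}$ to touch $C_{e_0}^i-e_0$, yielding the claim. The main obstacle is the local rotation analysis at a shared vertex $w$: ruling out the configuration in which both $\gamma_e$-rays at $w$ enter $F_{e_0}$---which would otherwise add extra arcs---requires care with the rotation system together with the h-convex constraints of Lemmas~\ref{lm:no2VxInC1} and~\ref{lm:no2edgeInC1}.
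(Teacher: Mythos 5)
Your handling of the transverse crossings is essentially the paper's own argument and is sound: by (PS4) each closed edge of $C_{e_0}^i$ meets $\gamma_e$ at most once, at such a crossing the two ends of the crossed edge lie on opposite sides of $\gamma_e$ (because $\gamma_e\subseteq D[e]\cup F_e$), hence in different ones of $\Sigma_e^1,\Sigma_e^2$, and a third crossing would force a $1,2,1,2$ pattern contradicting Lemma~\ref{lm:noInterlace}. The genuine gap is in the case that an end $u$ of $e$ lies on $C_{e_0}^i$, which you yourself flag as ``the main obstacle'' and leave unproved. Worse, the local fact you propose there is false in general: it can perfectly well happen that $\gamma_e\cap\Delta_{e_0}^i$ is exactly the single point $D[u]$, and then \emph{both} $\gamma_e$-rays at $u$ leave into $F_{e_0}$ (the paper's proof explicitly allows this; it is harmless only because in that situation there are no crossings of $\gamma_e$ with $D[C_{e_0}^i]$ at all). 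In addition, your bookkeeping does not close: for a sufficiently small one-sided neighbourhood $N$, no excursion of $\gamma_e$ through the open face $F_{e_0}$ stays inside $N$, so each local segment of $\gamma_e$ leaving a point of $D[C_{e_0}^i]$ into $F_{e_0}$ produces its own component of $(\gamma_e\cap N)\setminus D[C_{e_0}^i]$; thus ``at most four entries'' would give at most four arcs, not two, and nothing in your plan prevents vertex entries and crossing entries from coexisting and summing to more than two.

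What is missing is a global control of $\gamma_e\cap\Delta_{e_0}^i$, which is exactly the paper's central claim: there is no arc of $\gamma_e$ in $\Delta_{e_0}^i$ joining two points of $D[C_{e_0}^i]$ neither of which is $D[u]$. Its proof is short but uses h-convexity through Lemma~\ref{lm:disjtDelta}: if $\alpha$ were such an arc, (PS4) forces its two ends onto distinct closed edges of $C_{e_0}^i$, so some vertex $w$ of $C_{e_0}^i$ is separated from $D[u]$ inside $\Delta_{e_0}^i$ by $\alpha$; since $D[uw]\subseteq\Delta_{e_0}^i$, the edge $uw$ must cross $\alpha\subseteq\gamma_e$ while also containing the vertex $D[u]\in\gamma_e$, giving two intersections of $\gamma_e$ with the closed edge $D[uw]$ and contradicting (PS4). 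This claim shows that when an end of $e$ lies on $C_{e_0}^i$, the set $\gamma_e\cap\Delta_{e_0}^i$ is a single (possibly degenerate) arc through $D[u]$ -- in particular there are then no further crossings with $D[C_{e_0}^i]$ -- and the bound of two entries, hence two arcs in $N$, follows; the crossings-only case is then the separate $1,2,1,2$ argument you already have. Without this claim (or an equivalent), your proposal does not establish the lemma, and the ``furthermore'' clause about $D[e]$ inherits the same vagueness.
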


\begin{cproof}  
We begin with the central claim. \deceight{For the sake of definiteness, we assume $i=1$.}

\begin{claim}\label{cl:uAndArc}
\augfifteen{If $u$ is \janThree{a vertex} incident with $e$ and is in $C_{e_0}^1$, then there is no arc of $\gamma_e$ contained in the interior of $\Delta_{e_0}^1$ that joins two points in $D[C_{e_0}^1]$ neither of which is} \janThree{$D[u]$.}
\end{claim}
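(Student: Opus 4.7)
I would prove Claim \ref{cl:uAndArc} by contradiction. Suppose such an arc $\alpha$ of $\gamma_e$ exists, with endpoints $p_1, p_2 \in D[C_{e_0}^1]$, both distinct from $D[u]$, and with $\alpha \setminus \{p_1,p_2\} \subseteq \mathrm{int}(\Delta_{e_0}^1)$. The first step is to classify each endpoint $p_i$. By (PS1), the only vertices of $K_n$ on $\gamma_e$ are the ends $u,v$ of $e$; since $p_i \neq D[u]$, if $p_i$ is a vertex, then $p_i = D[v]$ and $v\in V(C_{e_0}^1)$. Otherwise, $p_i$ lies in the interior of some edge $f_i \in E(C_{e_0}^1)$, and (PS4) forces $p_i$ to be the unique crossing of $\gamma_e$ with $D[f_i]$. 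Because $D[u]\in\partial\Delta_{e_0}^1$ but $\alpha\setminus\{p_1,p_2\}\subseteq\mathrm{int}(\Delta_{e_0}^1)$, we have $u\notin\alpha$, so $u$ lies on the complementary arc of $\gamma_e$ from $p_1$ to $p_2$.

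Next, I would let $R$ be the sub-arc of $D[C_{e_0}^1]$ from $p_1$ to $p_2$ that does not contain $D[u]$, and let $\Delta^*\subseteq\Delta_{e_0}^1$ be the closed disc bounded by $\alpha\cup R$. Then $D[u]\notin\Delta^*$. The proof will split into two cases according to whether $\alpha$ is contained in $D[e]$ or uses the arc $\beta_e := \gamma_e \setminus \mathrm{int}(D[e]) \subseteq F_e$.

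In the first case, $\alpha \subseteq D[e]$, so $D[e]$ itself enters $\mathrm{int}(\Delta_{e_0}^1)$ at $p_1$ and exits at $p_2$ while also having $D[u]$ on $C_{e_0}^1$. Tracking the locations of $u,v$ relative to $\Sigma_{e_0}^1$ and $\Sigma_{e_0}^2$ and invoking Lemmas \ref{lm:disjtDelta}, \ref{lm:no2VxInC1}, and \ref{lm:no2edgeInC1}, I would argue that the edge $e$ must belong to $D_{e_0}^1$, so $D[e]\subseteq\Delta_{e_0}^1$; but then the crossing at $p_1$ or $p_2$ of $D[e]$ with an edge of $C_{e_0}^1$ forces that edge of $C_{e_0}^1$ (which lies in $D_{e_0}^1$) to meet an edge in $D_{e_0}^2$ via a chain of crossings in the induced $K_4$'s, contradicting Lemma \ref{lm:no2edgeInC1}; alternatively, the three points $p_1,p_2,D[u]$ of $\gamma_e \cap D[C_{e_0}^1]$ induce a $1,2,1,2$ labelling on the vertices of $C_{e_0}^1$ relative to the sides of $e_0$, contradicting Lemma \ref{lm:noInterlace}.

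In the second case, $\alpha$ contains a sub-arc $\alpha'$ of $\beta_e \subseteq F_e$, so $\alpha'\subseteq F_e\cap\mathrm{int}(\Delta_{e_0}^1)$, meaning $F_e$ reaches into the open disc $\mathrm{int}(\Delta_{e_0}^1)$. Since $u$ lies on $\gamma_e$ off of $\alpha$, the curve $\gamma_e$ must cross $D[C_{e_0}^1]$ at $p_1$ and $p_2$ in addition to passing through $D[u]$, and by (PS4) these crossings occur on distinct edges of $C_{e_0}^1$. Tracking which side of $D[C_{e_0}^1]$ each half of $\gamma_e$ lies on, I would obtain an edge $\hat f$ of $C_{e_0}^1$ whose ends are separated by $\gamma_e$ in the cyclic order on $C_{e_0}^1$ in a way that forces a $1,2,1,2$ interlacing pattern relative to the sides of $e_0$, again contradicting Lemma \ref{lm:noInterlace}. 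The main obstacle will be this second case, because $\beta_e$ may meander through several edges of $C_{e_0}^1$; the argument hinges on combining the uniqueness of crossings from (PS4) with the no-interlace property of h-convex drawings to show that the alleged $\alpha$ cannot coexist with the vertex $u$ lying on $C_{e_0}^1 \setminus \{p_1,p_2\}$.
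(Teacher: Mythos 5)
Your proposal diverges from the paper's argument, and in its main case it relies on a contradiction that need not occur. The crucial claims are left as intentions (``I would argue\dots'', ``Tracking which side\dots I would obtain an edge $\hat f$\dots''), and when one tries to carry them out they fail: in your second case the only forced intersections of $\gamma_e$ with $D[C_{e_0}^1]$ are the two ends $p_1,p_2$ of $\alpha$ together with the passage through the vertex $D[u]$, and two side-changes along the cycle produce a label pattern consisting of two contiguous blocks (all $1$'s on one arc, all $2$'s on the other, with $u$ labelled $3$). Lemma \ref{lm:noInterlace} does not forbid such a pattern, so no $1,2,1,2$ interlacing is forced and no contradiction results. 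There is also a misstatement compounding this: you propose labelling the vertices of $C_{e_0}^1$ ``relative to the sides of $e_0$'', but every vertex of $C_{e_0}^1$ lies on the same side of $e_0$ by definition, so that labelling is trivial; the labelling relevant to Lemma \ref{lm:noInterlace} would have to be taken relative to $e$. Your first case (``via a chain of crossings in the induced $K_4$'s'') is likewise asserted rather than proved.

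The missing idea is the one the paper uses, and it bypasses all case analysis. Since (PS4) gives at most one point of $\gamma_e$ in each closed edge, the two ends of $\alpha$ lie on distinct closed edges of $C_{e_0}^1$ (you noted this), so each component of $D[C_{e_0}^1]\setminus\{p_1,p_2\}$ contains a vertex; choose a vertex $w$ of $C_{e_0}^1$ in the component not containing $D[u]$. Both $u$ and $w$ are vertices of $C_{e_0}^1$, so by Lemma \ref{lm:disjtDelta} the edge $D[uw]$ lies in $\Delta_{e_0}^1$, and since it cannot cross $D[C_{e_0}^1]$ it must cross $\alpha$ (the arc $\alpha$ separates $D[u]$ from $D[w]$ inside $\Delta_{e_0}^1$). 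As $D[e]\subseteq\gamma_e$ cannot cross $\alpha\subseteq\gamma_e$, we have $uw\ne e$, and then $D[uw]$ meets $\gamma_e$ in two points, namely the vertex $D[u]$ and the crossing with $\alpha$, contradicting (PS4). Your proposal never exploits an edge from $u$ to the far side of $\alpha$, which is exactly where the contradiction lives.
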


\begin{proof}\marchTwentysix{
Suppose to the contrary that there is such an arc $\alpha$. \deceight{It follows from $\alpha\subseteq \gamma_e$ and (PS4) that} there is no closed edge of $D[C_{e_0}^1]$ containing both ends $y,z$ of $\alpha$. Thus, each component of $D[C_{e_0}^1]\setminus \{y,z\}$ has a vertex of $C_{e_0}^1$.  Let \janThree{$D[w]$} be a vertex in the component of \aprilEight{$D[C_{e_0}^1]\setminus\{y,z\}$} not containing \janThree{$D[u]$}.}

\marchTwentysix{Both \febTwelf{$u$} and \febTwelf{$w$} are in $C_{e_0}^1$, so both are \febTwelf{drawn} in $\Delta_{e_0}^1$.  Therefore, $D[uw]\subseteq \Delta_{e_0}^1$, so $D[uw]$ crosses $\alpha$.  Since $\alpha\subseteq \gamma_e$, \janThree{$D[e]$} cannot cross $\alpha$, so $uw\ne e$.  But the edge \janThree{$D[uw]$} has the two points \janThree{$D[u]$} and the crossing \augfifteen{with $\alpha$ in  $\gamma_e$}, \maythreeone{contradicting (PS4) and completing the proof.}
}\end{proof}

\janThree{Suppose that $\gamma_e\cap D[C_{e_0}^1]$} \julythirteen{contains a vertex \janThree{$D[u]$, which must be incident with $e$ since $\gamma_e$ satisfies \ref{it:psVertex}}. If both ends of $e$ are in $C_{e_0}^1$, then \janThree{$D[e]\subseteq D_{e_0}^1$ since $D_{e_0}^1$ is the subdrawing of $D$ induced by} \janFourteen{$\Sigma_{e_0}^1$}. \janThree{Furthermore,} \maythreeone{the claim}  implies \janThree{that the ends of $e$} are the only intersections of $\gamma_e$ with $C_{e_0}$.}  
Therefore \janThree{$D[e]$} \augfifteen{is} \julythirteen{the only segment of $\gamma_e$ contained in $\Delta_{e_0}^1$.}

\janThree{We may therefore assume that} $u$ is the only end of \janThree{$e$} in $C_{e_0}^1$.  Then there are only two directions from \janThree{$D[u]$} in $\gamma_e$; each of these directions can give an arc in $\gamma_e\cap \Delta_{e_0}^1$ having \janThree{$D[u]$} as an end.  By Claim \ref{cl:uAndArc}, these are the only possible intersections of $\gamma_e$ with $\Delta_{e_0}^1$. \febTwelf{In this case, $\gamma_e\cap\Delta_{e_0}^1$ is either one arc, with $D[u]$ as either an end or an interior point, or $\gamma_e\cap\Delta_{e_0}^1$ \maythreeone{is just} $D[u]$.}


\julythirteen{It follows that if an end of $e$ is in $C_{e_0}^1$, then the two cases above show that \janFourteen{$\gamma_e\cap \Delta_{e_0}^1$ is  either}
\febTwelf{a non-trivial arc or $D[u]$.} \janFourteen{In the former case,}
 only the ends of this arc can be the start of a segment} \febTwelf{of $\gamma_e$} \janFourteen{from a point of \janThree{$D[C_{e_0}^1]$} into $F_{e_0}$, as required.} \janFourteen{In the latter case, $\gamma_e\cap \Delta_{e_0}^1=D[u]$, and there are exactly two arcs of $\gamma_e$ having an end in $D[u]$ and extending into $F_{e_0}$.}


Thus, we may assume that no point of $\gamma_e\cap D[C_{e_0}^1]$ is a vertex.  In this case, \deceight{(PS4) implies that} every intersection of $\gamma_e$ with \janThree{$D[C_{e_0}^1]$} is a \janFourteen{crossing. }
\janFourteen{ To} \janThree{prove that there are at most two segments of $\gamma_e$ from a point of $D[C_{e_0}^1]$ into $F_{e_0}$,} \julythirteen{it suffices to prove that $\gamma_e$ has at most two crossings with \janFourteen{$D[C_{e_0}^1]$}.}

\marchTwentysix{Suppose by way of contradiction that there are three crossings of $\gamma_e$ with \janThree{$D[C_{e_0}^1]$}.}   
\marchTwentysix{Traverse $D[C_{e_0}^1]$ in one direction from such a crossing $z$.  The first vertex \janThree{$D[v]$} reached is incident with an edge \aprilEight{$f$} of $C_{e_0}^1$ such that both \janThree{$D[v]$} and $z$ are in the closed edge \aprilEight{$D[f]$}.  Since $z\in \gamma_e$, none of the rest of \aprilEight{$D[f]$} (including \janThree{$D[v]$}) is in $\gamma_e$.  In particular, \janThree{$D[v]$} is in $\Sigma_{e}^1\cup \Sigma_{e}^2$. }

\maythreeone{Suppose \janThree{$D[w]$} is} the first vertex reached from $z$ traversing $D[C_{e_0}^1]$ in the other direction, \janThree{that is, if $w$ is the other vertex incident with $f$}\maythreeone{. Because $\gamma_e\subseteq D[e]\cup F_e$,} \janThree{$D[v]$} and \janThree{$D[w]$} are on different sides of $\gamma_e$.  Therefore, they are in different ones of $\Sigma_{e}^1$ and $\Sigma_{e}^2$.   Thus, every crossing of $\gamma_e$ with \janThree{$D[C_{e_0}^1]$} produces a change between 1 and 2 in the ``1,2,3"-labelling of Lemma \ref{lm:noInterlace}.  Let $z_1,z_2,z_3$ be three crossings of $\gamma_e$ with $D[C_{e_0}^1]$, in this cyclic order.  Then the vertices of $C_{e_0}^1$ nearest each $z_k$ have labels 1 and 2.  Starting at $z_1$, we find 1 and 2 near it.  Up to relabelling, we may assume the 1 occurs between $z_1$ and $z_3$ and the 2 between $z_1$ and $z_2$.  Then choose the 1 near $z_2$ and the 2 near $z_3$ to obtain a 1,2,1,2 pattern, contradicting Lemma \ref{lm:noInterlace}. \end{cproof}

The following corollary is a straightforward consequence of Lemma \ref{lm:onlyTwoIntersections}.

\begin{corollary}\label{co:existFirstCurve}\julythirteen{Let $D$ be an h-convex drawing of $K_n$ with witnessing set $\mathcal C$ of convex sides.
Let $J\subseteq E(K_n)$ and suppose that, for each \aprilEight{$e\in J$}, there is a simple closed curve $\gamma_e$ in $D[e]\cup F_e$ containing $D[e]$}, \janThree{and such that the extensions $\{\gamma_e\mid e\in J\}$ satisfy \ref{it:psVertex} and (PS4).}
For any \deceight{$i\in\{1,2\}$ and any} sufficiently small neighbourhood $N$ of \deceight{$D[C_{e_0}^i]$ in $F_{e_0}\cup D[C_{e_0}^i]$}, there is a choice of \deceight{$\gamma_{e_0}^i$} in $N$ such that
\febTwelf{the curves in \deceight{$J\cup\{\delta_{e_0}^i\}$}
 satisfy \ref{it:psVertex}, \ref{it:psWeakCrossings}, \ref{it:psEdge}, and (PS4). \hfill\eop
 }
\end{corollary}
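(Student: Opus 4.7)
The strategy is to construct $\gamma_{e_0}^i$ as $D[e_0]\cup \beta_{e_0}^i$, where $\beta_{e_0}^i$ is an arc in $N\cap F_{e_0}$ joining the two ends of $e_0$, obtained by pushing $D[C_{e_0}^i-e_0]$ off into $F_{e_0}$ and avoiding all vertices of $K_n$ other than those two ends.  For sufficiently small $N$ and close enough push-off, $\gamma_{e_0}^i$ is a simple closed curve containing $D[e_0]$ and lying in $N$, and (PS1) is immediate because the only vertices of $K_n$ on $\gamma_{e_0}^i$ are the ends of $e_0$.

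To verify (PS2w), I would fix $e\in J$ and apply Lemma \ref{lm:onlyTwoIntersections}: for sufficiently small $N$, $(\gamma_e\cap N)\setminus D[C_{e_0}^i]$ consists of at most two arcs in $N\cap F_{e_0}$, each with an endpoint on $D[C_{e_0}^i]$.  Choosing $\beta_{e_0}^i$ close enough to $D[C_{e_0}^i-e_0]$ ensures that each such arc whose $D[C_{e_0}^i]$-endpoint lies on $D[C_{e_0}^i-e_0]$ crosses $\beta_{e_0}^i$ transversally exactly once, while each arc whose $D[C_{e_0}^i]$-endpoint lies on the relative interior of $D[e_0]$ does not reach $\beta_{e_0}^i$.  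Combining with the bound $|\gamma_e\cap D[e_0]|\le 1$ from the existing (PS4) yields $|\gamma_e\cap \gamma_{e_0}^i|\le 2$.

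The main obstacle is verifying (PS3) and (PS4) for $\gamma_{e_0}^i$ against an arbitrary edge $e\in E(K_n)\setminus\{e_0\}$: the naive bound $|D[e]\cap\gamma_{e_0}^i|\le |D[e]\cap D[e_0]|+|D[e]\cap \beta_{e_0}^i|\le 1+1$ is off by one, so simultaneous contributions must be ruled out.  I would proceed by case analysis on the positions of the ends of $e$ relative to $\Sigma_{e_0}^1$, $\Sigma_{e_0}^2$, and the ends of $e_0$.  If $e$ shares a vertex with $e_0$, or both ends of $e$ lie in a single $\Sigma_{e_0}^k$ together with ends of $e_0$, then Lemma \ref{lm:disjtDelta} gives $D[e]\subseteq \Delta_{e_0}^k$, so $D[e]\cap F_{e_0}=\varnothing$ and in particular $D[e]\cap \beta_{e_0}^i=\varnothing$, leaving at most the single point $D[e]\cap D[e_0]$.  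The only remaining case is that $e$ has one end in $\Sigma_{e_0}^1$ and one in $\Sigma_{e_0}^2$ with $D[e]$ crossing $D[e_0]$; here the total number of crossings of $D[e]$ with $D[C_{e_0}^i]$ is odd, so a crossing on $D[e_0]$ is accompanied by an even number of crossings on $D[C_{e_0}^i-e_0]$. For $e\in J$, the ``at most one arc contained in $D[e]$'' refinement of Lemma \ref{lm:onlyTwoIntersections} forces that even number to be zero; for $e\notin J$, the same conclusion follows from Lemma \ref{lm:noInterlace} applied to the $1,2,3$-label sequence on $C_{e_0}^i$ induced by $e$. Either way, $D[e]\cap \beta_{e_0}^i=\varnothing$, yielding $|D[e]\cap \gamma_{e_0}^i|\le 1$ with the intersection being either a crossing or a shared vertex, as required by (PS3) and (PS4).
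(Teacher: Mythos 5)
Your overall route is the intended one (the paper offers no written proof, asserting the corollary as a straightforward consequence of Lemma \ref{lm:onlyTwoIntersections}, and the intended construction is exactly your push-off of $D[C_{e_0}^i-e_0]$ into $F_{e_0}$), and your treatment of \ref{it:psEdge}/(PS4) via Lemmas \ref{lm:disjtDelta}, \ref{lm:crossingFi} and \ref{lm:noInterlace} is essentially right. However, your verification of \ref{it:psWeakCrossings} has a genuine arithmetic gap. When both $\Sigma_{e_0}^1$ and $\Sigma_{e_0}^2$ are nonempty, the relative interior of $D[e_0]$ is not incident with $F_{e_0}$ (locally its two sides lie in $\Delta_{e_0}^1$ and $\Delta_{e_0}^2$, by Lemma \ref{lm:disjtDelta}), so a crossing of $\gamma_e$ with the interior of $D[e_0]$ contributes \emph{no} arc to $(\gamma_e\cap N)\setminus D[C_{e_0}^i]$ and is therefore not counted by Lemma \ref{lm:onlyTwoIntersections}. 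Consequently the bounds you cite only give $|\gamma_e\cap\gamma_{e_0}^i|\le 2+1=3$: a priori $\gamma_e$ could cross $D[e_0]$ once and also have two germs entering $F_{e_0}$ across $D[C_{e_0}^i-e_0]$, each meeting $\beta_{e_0}^i$. Your final sentence of that paragraph simply asserts $\le 2$ without ruling this out. The missing linkage is that every germ into $F_{e_0}$ and every crossing with the interior of $D[e_0]$ is a crossing of the two simple closed curves $\gamma_e$ and $D[C_{e_0}^i]$, so (in the case $\gamma_e$ meets no vertex of $C_{e_0}^i$) their total number is even; with at most two germs and at most one $e_0$-crossing, parity forces the sum to be at most $2$. (Equivalently, one can quote the stronger fact established inside the proof of Lemma \ref{lm:onlyTwoIntersections}, that $\gamma_e$ crosses the whole cycle $D[C_{e_0}^i]$, including $D[e_0]$, at most twice.) It is striking that you deploy exactly this parity device for edges in your (PS3)/(PS4) analysis but not here, where it is equally indispensable.

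Two smaller omissions of the same kind: first, \ref{it:psWeakCrossings} demands that \emph{all} intersections be crossings, and when $e\in J$ shares an end $u$ with $e_0$ the curves $\gamma_e$ and $\gamma_{e_0}^i$ necessarily meet at $D[u]$; that this intersection is a crossing is not a genericity statement but depends on how $\beta_{e_0}^i$ is inserted into the rotation at the ends of $e_0$ (it should be placed immediately adjacent to the path $C_{e_0}^i-e_0$), and your proposal never addresses it, nor the corresponding count of germs at such a vertex. Second, in your (PS3)/(PS4) case analysis the parity claim ``the total number of crossings of $D[e]$ with $D[C_{e_0}^i]$ is odd'' presumes both ends of $e$ lie off the cycle; when an end of $e$ is a vertex of $C_{e_0}^i$ and $D[e]$ leaves it directly into $F_{e_0}$, the bookkeeping changes and one must again invoke the Lemma \ref{lm:noInterlace} labelling (three distinct crossed cycle edges force a $1,2,1,2$ pattern) to exclude an extra meeting with $\beta_{e_0}^i$. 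These are all repairable with the tools you already use, but as written the central bound of the corollary is not established.
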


We are now ready for the proof of Theorem \ref{th:main}.

\begin{cproofof}{\decTwoOhTwoOh{Theorem \ref{th:main}  \ref{it:Dhc} implies \ref{it:Dps}}} \octeight{\julythirteen{\augfifteen{Suppose $J\subseteq E(K_n)$} and we have, for each $e\in J$, a simple closed curve $\gamma_e$, such that the set $\{\gamma_e\mid e\in J\}$ satisfies \deceight{\ref{it:psVertex}--\janThree{(PS4)}}. 
If $J=E(K_n)$, then we are done; otherwise, let $e_0\in E(K_n)\setminus J$.} 
}

\octeight{
\julythirteen{We show there is a \augfifteen{curve $\gamma_{e_0}$} containing $D[e_{0}]$ and \febFiveChange{otherwise in} \augfifteen{the face $F_{e_0}$} of \febFiveChange{$D_{e_{0}}^1\cup D_{e_{0}}^2$} bounded by \febFiveChange{$(C_{e_{0}}^1-e_{0})\cup (C_{e_{0}}^2-e_{0})$} and \augfifteen{such that $\{\gamma_ e\mid e\in J\cup\{e_0\}\}$} satisfies} \deceight{\janThree{\ref{it:psVertex}--\janThree{(PS4)}.}}  
}


\octeight{
 Let $M$ consist of \maytwonine{those $e\in E(K_n)\setminus (J\cup \{e_0\})$} such that $D[e]\cap F_{e_0}\ne \varnothing$.  In any order, repeatedly use Corollary \ref{co:existFirstCurve} to obtain, for all $e\in M$, $\delta_e^1$ so that the curves \augfifteen{in the set $\Gamma=\{\gamma_e\mid e\in J\}\cup \{\delta_e^1\mid e\in M\}$} \janThree{satisfy \ref{it:psVertex}, \deceight{\ref{it:psWeakCrossings}, \ref{it:psEdge}, and \janThree{(PS4)}.}} 
 }
 
 \octeight{
 For each \augfifteen{$e\in J\cup M$, \marchTen{$\gamma_e\setminus e$}} is in the face \janThree{$F_e$} of $D_{e}^1\cup D_{e}^2$ bounded by $(C^1_{e}-e)\cup (C^2_{e}-e)$, $\Sigma_{e}^1$ is on one \augfifteen{side of $\gamma_e$} and $\Sigma_{e}^2$ is on the other side \augfifteen{of $\gamma_e$}.  \julythirteen{Let $\delta_{e_0}^1$ be as in Corollary \ref{co:existFirstCurve} with respect to $e_0$ and $J$; this is our first approximation to $\gamma_{e_0}$.} Applying Corollary \ref{co:existFirstCurve}  to $C_{e_0}^2$, consider  an analogous curve $\delta_{e}^2$. By Corollary  \ref{co:existFirstCurve}, both $\Gamma\cup \{\delta_{e_0}^1\}$ and $\Gamma\cup \{\delta_{e_0}^2\}$ satisfy \ref{it:psVertex},  
\janThree{\decTwoOhTwoOh{\ref{it:psEdge}, (PS4)}. Moreover, $\delta_{e_0}^1$ and $\delta_{e_0}^2$ intersect each curve in $\Gamma$  at most twice and all intersections are crossings.}
}
 
 \octeight{
 For $i=1, 2$, let $\Gamma_i=\{\delta\in \Gamma\;:\; \delta\cap \delta_{e_0}^i\neq \emptyset\}$. 
 }

 \begin{claim}
 \octeight{
 Either $\Gamma=\Gamma_1\cup \Gamma_2$ or $D$ is f-convex.}
\end{claim}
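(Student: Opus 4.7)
The plan is to argue by contradiction: suppose some $\delta_e\in\Gamma$ avoids both $\delta_{e_0}^1$ and $\delta_{e_0}^2$, and derive that $D$ is f-convex.

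First I would analyze the topology of $\delta_{e_0}^1\cup\delta_{e_0}^2$. Each $\delta_{e_0}^i$ contains $D[e_0]$ with the rest lying in the open face $F_{e_0}$ of $D_{e_0}^1\cup D_{e_0}^2$; hence their union is a theta graph whose branch vertices are the two ends of $e_0$ and whose three arcs are $D[e_0]$, $\delta_{e_0}^1\setminus\mathrm{int}(D[e_0])$, and $\delta_{e_0}^2\setminus\mathrm{int}(D[e_0])$. The complement in $\sphere$ has three open faces $R_1,R_2,R_0$, contained in the interior of $\Delta_{e_0}^1$ (plus a thin strip in $F_{e_0}$), the interior of $\Delta_{e_0}^2$ (plus a strip), and the remainder of $F_{e_0}$, respectively. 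Because $\delta_e$ is connected and disjoint from $\delta_{e_0}^1\cup\delta_{e_0}^2$, it lies in exactly one $R_j$; since $R_0\subseteq F_{e_0}$ contains no vertex, while $\delta_e\supseteq D[e]$ contains the vertex endpoints of $e$, we have $\delta_e\not\subseteq R_0$. Relabel so that $\delta_e\subseteq R_1$.

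The second step is to pin down $e$. If $e\in M$, then by definition $D[e]$ meets $F_{e_0}$, and Lemma \ref{lm:crossingFi} forces the ends of $e$ into different $\Sigma_{e_0}^i$, so $D[e]\subseteq\delta_e$ would cross both $\delta_{e_0}^1$ and $\delta_{e_0}^2$, a contradiction; hence $e\in J$. If $e$ shared an endpoint $u$ with $e_0$, then $u$ would belong to $\delta_e\cap\delta_{e_0}^2$, again contradicting the choice of $\delta_e$, so $e$ is vertex-disjoint from $e_0$. Lemma \ref{lm:no2VxInC1} applied to $D[e]\subseteq\Delta_{e_0}^1$ then places both ends of $e$ into $\Sigma_{e_0}^1$.

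The main obstacle will be translating this trapped configuration into f-convexity of the whole drawing $D$. My intended route is to show $\Sigma_{e_0}^2=\varnothing$; once that is established, $D=D_{e_0}^1$ and Lemma \ref{lm:SigmaIsF} directly gives that $D$ is f-convex, witnessed by the face $F_{e_0}^1$. To rule out a vertex $w\in\Sigma_{e_0}^2$, I would take an end $a$ of $e$ and examine the edge $aw$: by Lemma \ref{lm:crossingFi} it must cross $F_{e_0}$, so whichever of $J,M$ it belongs to, the associated curve in $\Gamma$ is forced to pass from $R_1$ through $R_0$ into $R_2$. Combining this with the trapping of $\delta_e$ inside $R_1$, with property \textbf{(PS4)} (which limits $\gamma_{aw}\cap D[e]$ to the single point $a$), and with the no-$1,2,1,2$ interlace condition on $C_{e_0}^1$ provided by Lemma \ref{lm:noInterlace}, I would derive either a forbidden second crossing of $\gamma_{aw}$ with $D[e]$ or a forbidden cyclic labelling pattern on $C_{e_0}^1$. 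The detailed case analysis---splitting on $aw\in J$ versus $aw\in M$, and on the cyclic positions of $a$, $w$, and the crossings of $aw$ with $C_{e_0}^1$---is the technical heart of the argument and the step I expect to be the main obstacle.
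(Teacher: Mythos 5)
Your opening two steps match the paper: the theta-curve analysis trapping $\delta_e$ in the closure of $\Delta_{e_0}^1$, the elimination of $e\in M$ via Lemma \ref{lm:crossingFi}, and the observation that $e$ shares no end with $e_0$, so that both ends of $e$ lie in $\Sigma_{e_0}^1$. The gap is in your third step, which is where all the content lies and which you explicitly defer. Your target is also the wrong one: you aim to show $\Sigma_{e_0}^2=\varnothing$, i.e.\ f-convexity witnessed by $F_{e_0}^1$, whereas the trapped curve naturally gives information about the partition $\{\Sigma_e^1,\Sigma_e^2\}$, not about $\{\Sigma_{e_0}^1,\Sigma_{e_0}^2\}$: since $\gamma_e\subseteq\Delta_{e_0}^1$ separates $\Sigma_e^1$ from $\Sigma_e^2$, everything outside $\Delta_{e_0}^1$ (in particular all of $\Sigma_{e_0}^2$ and the ends of $e_0$) lies on one side of $\gamma_e$. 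The paper exploits exactly this: using h-convexity, the fan of 3-cycles from an end of $e_0$ over the edges of $C_{e_0}^1$, and crossing-$K_4$ arguments, it forces $e$ to be an edge of $C_{e_0}^1$; then Lemma \ref{lm:SigmaIsF} applied to $D_{e_0}^1$ puts all vertices of $D_{e_0}^1$ not incident with $e$ into one $\Sigma_e^k$, and combined with the parity statement above, all vertices of $K_n$ not incident with $e$ lie in one $\Sigma_e^k$ --- f-convexity witnessed at $e$. The paper's final paragraph explicitly allows $\Delta_{e_0}^2$ to contain vertices; neither its argument nor your sketch gives any reason to believe $\Sigma_{e_0}^2=\varnothing$, so your reduction to that statement is unsupported.

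Moreover, the one concrete mechanism you propose for it is unsound: Lemma \ref{lm:crossingFi} says that an edge meeting $F_{e_0}$ has an end in each of $\Sigma_{e_0}^1$ and $\Sigma_{e_0}^2$; you invoke the (false) converse. An edge $aw$ with $a\in\Sigma_{e_0}^1$ and $w\in\Sigma_{e_0}^2$ can simply cross $D[e_0]$ and pass directly from $\Delta_{e_0}^1$ to $\Delta_{e_0}^2$ without touching $F_{e_0}$ (two crossing diagonals of a quadrilateral already do this); in that case $aw$ need not lie in $J\cup M$ at all, so there is no associated curve in $\Gamma$ on which to run your $R_1$--$R_0$--$R_2$ argument. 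Since the entire case analysis that is supposed to convert these (unavailable) crossings, (PS4), and Lemma \ref{lm:noInterlace} into a contradiction is left as ``the technical heart,'' the proposal does not establish the claim; the missing ingredients are precisely the paper's steps forcing $e\in C_{e_0}^1$ and the subsequent application of Lemma \ref{lm:SigmaIsF}.
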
  
\begin{proof}
\octeight{
Suppose that for some $e\in J\cup M$ the extension $\delta\in \Gamma$ of $e$  is not in $\Gamma_1\cup \Gamma_2$. From  Lemma \ref{lm:crossingFi}  it follows that any edge of $M$ crosses both $\delta_{e_0}^1$ and $\delta_{e_0}^2$. Therefore  $e\in J$ and  $\delta=\gamma_{e}$.  
 Since  $\gamma_e$ does not intersect $\delta^1_{e_0}\cup \delta^2_{e_0}$, we conclude that $\gamma_e$ is disjoint from $F_{e_0}$.
 }

Recall that, for \marchTen{$\ell=1,2$, $\Delta^\ell_{e_{0}}$} is the closed disc in \marchTen{$D^\ell_{e_{0}}$} bounded by \marchTen{$C^\ell_{e_{0}}$} and disjoint from $F_{e_0}$. The preceding paragraph implies that, for some \augfifteen{$\ell\in\{1,2\}$, $\gamma_e$} is contained in \marchTen{$\Delta_{e_{0}}^\ell$}.  It follows that:  (i) $e$ has both ends in \marchTen{$\Delta_{e_{0}}^\ell$}; and (ii) every vertex of \marchTen{$C_{e_{0}}^\ell$} is in the same one of $\Delta_{e}^1$ and $\Delta_{e}^2$ (because, by \augfifteen{assumption, $\gamma_e$} separates $\Delta_{e}^1$ from $\Delta^2_{e}$).

If an edge $xy$ crosses $e$, then $x$ and $y$ are \decTwoOhTwoOh{not both in the same one} of $\Sigma_{e}^1$ and $\Sigma_{e}^2$.  Therefore, (ii) implies that, if $x,y$ are vertices in \marchTen{$C_{e_{0}}^\ell$}, then $xy$ does not cross $e$.  In particular, letting $z$ be one end of $e_{0}$ and letting $xy$ run through the edges of \marchTen{$C_{e_{0}}^\ell$}, the 3-cycles $xyz$ bound convex sides that cover \marchTen{$\Delta_{e_{0}}^\ell$}.  It follows that $e$ is contained in one of these; let it be $xyz$.  

Suppose by way of contradiction that $e$ has an end $u$ that is not one of $x,y,z$.  Then h-convexity implies that the convex sides in $\mathcal C$ of the 3-cycles $uxy$, $uxz$, and $uyz$ are all contained in the convex side of $xyz$.  

Let $v$ be the other end of $e$.  If $v\in\{x,y,z\}$, then the planar $K_4$ induced by $u,x,y,z$ shows that the \augusttwenty{two vertices in $\{x,y,z\}\setminus \{v\}$} \decTwoOhTwoOh{are not both in the same one of} $\Sigma_{e}^1$ and $\Sigma_{e}^2$, a contradiction.  Likewise, if $v\not\in\{x,y,z\}$, then the the 3-cycle in $\{uxy,uxz,uyz\}$ that contains $v$ in its convex side has its two vertices from $x,y,z$ \decTwoOhTwoOh{in different ones} of $\Sigma_{e}^1$ and $\Sigma_{e}^2$, a contradiction.  These contradictions show that both ends of $e$ are among $x,y,z$; that is, both ends of $e$ are in \marchTen{$C_{e_{0}}^\ell$}.

Next, suppose by way of contradiction, that $e$ is not an edge of \marchTen{$C_{e_{0}}^\ell$}.  Then it is a chord of \marchTen{$C_{e_{0}}^\ell$ in $D_{e_{0}}^\ell$}, and so it crosses an edge $xy$ with $x$ and $y$ in \marchTen{$C_{e_{0}}^\ell$} on different sides of $e$.  But then we have $x$ and $y$ are \decTwoOhTwoOh{in different ones} of $\Sigma_{e}^1$ and $\Sigma_{e}^2$, a contradiction.

Lemma \ref{lm:SigmaIsF} shows that \marchTen{$D_{e_{0}}^\ell$} (using the convex sides in $\mathcal C$) is f-convex.  Since $e$ is in \marchTen{$C_{e_{0}}^\ell$}, it follows that \augusttwenty{the} vertices of \marchTen{$D_{e_{0}}^\ell$} not incident with $e$ are\augusttwenty{, for some $k\in\{1,2\}$, all in $\Sigma_{e}^k$}.  Since \marchTen{$\delta_j\subseteq \Delta_{e_{0}}^\ell$}, all vertices of \marchTen{$\Delta_{e_{0}}^{3-\ell}$} are \augusttwenty{in the same $\Sigma_{e}^k$ as the two vertices incident with} $e_{0}$.  It follows that all vertices of $K_n$ \augusttwenty{not incident with $e$} are in the same \augusttwenty{$\Sigma_{e}^k$}, showing that $D$ is f-convex, as claimed.

\end{proof}

  In the case $D$ is f-convex, \decTwoOhTwoOh{\cite[Thm.~1]{amrs}} shows that $D$ is homeomorphic to a pseudolinear drawing in the plane.  By definition, the pseudolines intersect once in the plane, and they \janThree{can be chosen so that they} all cross again at the point at infinity that completes the sphere.  
  
  \octeight{
  Thus we may assume that $\Gamma_1\cup \Gamma_2=\Gamma$. In this case we apply Theorem \ref{thm:sweeping} to 
  $\Gamma$, $\sigma=D[e_0]$,   $\gamma_1=\delta_{e_0}^1$ and $\gamma_2=\delta_{e_0}^2$ to obtain a curve $\gamma_{e_0}$ \augfifteen{such that $\{\gamma_ e\mid e\in J\cup\{e_0\}\}$} satisfies \ref{it:psVertex}, \ref{it:psCrossings}, \janone{\ref{it:psEdge}, and \janThree{(PS4)}},  as desired.
  }
\ignore{

We now suppose $k\ge 0$ and we have a curve $\delta^k_{e_0}$ containing $D[e_0]$ and contained in \aprilEight{$F_{e_0}\cup D[e_0]$} \janThree{that satisfies \ref{it:psVertex}, (PS4), \ref{it:psEdge}, and intersects each $\gamma_e$ for $e\in J$ at most twice and all intersections are crossings.} If, \augfifteen{for every $e\in J$, $\big|\gamma_{e}\cap \delta^k_{e_0}\big|=2$}, then set \janThree{$\gamma_{e_0}=\delta_{e_0}^k$} and $J=J\cup \{e_0\}$ and start again above with the \janThree{curves $\gamma_e$ for $e$ in the} new $J$ satisfying \janThree{\ref{it:psVertex}, \ref{it:psCrossings}, \janThree{(PS4)}, and \ref{it:psEdge}.}

Otherwise, let $J_k$ be the set of those $e\in J$ \augfifteen{such that $\big|\gamma_e\cap \delta_{e_0}^k\big|=2$}.  Let $M$ consist of \maytwonine{those $e\in E(K_n)\setminus (J\cup \{e_0\})$} such that $D[e]\cap F_{e_0}\ne \varnothing$.  In any order, repeatedly use Corollary \ref{co:existFirstCurve} to obtain, for all $e\in M$, $\delta_e^0$ so that the curves \augfifteen{in the set $\{\gamma_e\mid e\in J\}\cup \{\delta^k_{e_0}\}\cup \{\delta_e^0\mid e\in M\}$} \janThree{satisfy \ref{it:psVertex}, \emph{\ref{it:psCrossings}}, \janThree{(PS4)}, and \ref{it:psEdge}.} For each $e\in M$, Lemma \ref{lm:crossingFi} implies $\big|\delta_e^0\cap \delta_{e_0}^k\big|=2$.

\augfifteen{Set $M_k=M\cup J_k$.   For $e\in M$, set $\delta_e$ to be $\delta^0_e$, while for $e\in J_k$, set $\delta_e$ to be  $\gamma_e$.  With this notation, \maytwonine{for each $e\in M_k$}, $\delta_e$ intersects $\delta^k_{e_0}$ in precisely two points.}

\marchTwentysix{We find} a $\delta^{k+1}_{e_0}$ \janThree{satisfying \ref{it:psVertex}, \janThree{(PS4)}, and \ref{it:psEdge}, and} such that either $J_{k+1}$ properly contains $J_k$ or such that $J_{k+1}=J_k$ and $\delta^{k+1}_{e_0}$ is in some definable way  better than $\delta^k_{e_0}$ \febFiveChange{(described below)}.   This will complete the induction that \augfifteen{proves $\gamma_{e_0}$} exists.  

{Let $\Gamma^k_{e_0}$ be defined by \augfifteen{\aprilEight{$\displaystyle\Gamma^k_{e_0}=\delta^k_{e_0}\cup \big(\hskip-4pt\bigcup_{e\in M_k}\hskip -4pt\delta_e\big)$}}.    
and let $F$ be a face of $\Gamma_{e_0}^k$ that is incident with an arc of $\delta_{e_0}^k\setminus D[e_0]$.  Suppose} that, for some $e\in J\setminus J_k$, there is an arc of $\gamma_e$ contained in $F$. There is in $F$ an arc $\alpha$ of \augfifteen{some such $\gamma_e$} such that a Reidemeister Type II move shifts a part of $\delta^k_{e_0}\setminus D[e_{0}]$ across $\alpha$, without intersecting another \augfifteen{such $\gamma_e$}.  This makes a new curve $\delta^{k+1}_{e_0}$ satisfying \janThree{\ref{it:psVertex}, \janThree{(PS4)}, and \ref{it:psEdge}, and} with $J_{k+1}= \{e\}\cup J_k$, as required.  

Therefore, if $e\in J\setminus J_k$, \augfifteen{we may assume} no face of \janThree{$\Gamma^k_{e_0}$} incident with an arc of $\delta^k_{e_0}\setminus D[e_0]$ contains an \augfifteen{arc of $\gamma_e$}.  We also \augfifteen{know that $\gamma_e$} does not cross $\delta^0_{e_0}$ \janThree{since $J_0\subseteq J_k$}.  Let $\Theta^k_{e_0}$ be  \marchTen{the component of $F_{e_0}\setminus \delta^k_{e_0}$ that is} on the side of $\delta^k_{e_0}$ that contains $\Sigma_{e_{0}}^2$.

\begin{claim}\label{cl:deltaJmeetsFi}  If \marchTwentysix{$e\in J\setminus J_k$}, then either $D$ is f-convex \augfifteen{or $\gamma_e$} \marchTen{has an arc in $\Theta^k_{e_0}$}.  
\end{claim}

\aprilEight{There are two comments to make before proving this claim.  In the case $D$ is f-convex, \cite{amrs} shows that $D$ is homeomorphic to a pseudolinear drawing in the plane.  By definition, the pseudolines intersect once in the plane, and they \janThree{can be chosen so that they} all cross again at the point at infinity that completes the sphere.  Thus, we may assume from \maytwonine{this claim} \augfifteen{that $\gamma_e$} has an arc in $\Theta^k_{e_0}$.  }

\aprilEight{Therefore, there exists an arc $A$ in \janThree{$\Gamma^k_{e_0}$} with \febSixteenChange{ends in the closed arc \janThree{$D[C^2_{e_{0}}]\setminus D[e_{0}]$} but otherwise contained in \marchTen{$\Theta^k_{e_{0}}$}} such that $A$ separates the interior of \janThree{$\delta^k_{e_0}\setminus D[e_{0}]$} \augfifteen{from \marchTwentysix{$\displaystyle\big(\hskip -8pt\bigcup_{e\in J\setminus J_k}\hskip-8pt\gamma_e\big)\cap \Theta^k_{e_0}$}}.}

\bigskip
\begin{proofof}{\aprilEight{\maytwonine{Claim}}}
 \augfifteen{If $\gamma_e$ has no arc in $\Theta^k_{e_0}$, then $\gamma_e$ is disjoint from $\Theta^k_{e_0}$.   Since  $\gamma_e$ does not intersect $\delta^0_{e_0}$, we conclude that $\gamma_e$ is disjoint from $F_{e_0}$.}  

Recall that, for \marchTen{$\ell=1,2$, $\Delta^\ell_{e_{0}}$} is the closed disc in \marchTen{$D^\ell_{e_{0}}$} bounded by \marchTen{$C^\ell_{e_{0}}$} and disjoint from $F_{e_0}$. The preceding paragraph implies that, for some \augfifteen{$\ell\in\{1,2\}$, $\gamma_e$} is contained in \marchTen{$\Delta_{e_{0}}^\ell$}.  It follows that:  (i) $e$ has both ends in \marchTen{$\Delta_{e_{0}}^\ell$}; and (ii) every vertex of \marchTen{$C_{e_{0}}^\ell$} is in the same one of $\Delta_{e}^1$ and $\Delta_{e}^2$ (because, by \augfifteen{assumption, $\gamma_e$} separates $\Delta_{e}^1$ from $\Delta^2_{e}$).

If an edge $xy$ crosses $e$, then $x$ and $y$ are in \decTwoOhTwoOh{not both in the same one} of $\Sigma_{e}^1$ and $\Sigma_{e}^2$.  Therefore, (ii) implies that, if $x,y$ are vertices in \marchTen{$C_{e_{0}}^\ell$}, then $xy$ does not cross $e$.  In particular, letting $z$ be one end of $e_{0}$ and letting $xy$ run through the edges of \marchTen{$C_{e_{0}}^\ell$}, the 3-cycles $xyz$ bound convex sides that cover \marchTen{$\Delta_{e_{0}}^\ell$}.  It follows that $e$ is contained in one of these; let it be $xyz$.  

Suppose by way of contradiction that $e$ has an end $u$ that is not one of $x,y,z$.  Then h-convexity implies that the convex sides in $\mathcal C$ of the 3-cycles $uxy$, $uxz$, and $uyz$ are all contained in the convex side of $xyz$.  

Let $v$ be the other end of $e$.  If $v$ is one of $x,y,z$, then the resulting planar $K_4$ shows that $e$ has the \augusttwenty{two vertices in $\{x,y,z\}\setminus \{v\}$} in different ones of $\Sigma_{e}^1$ and $\Sigma_{e}^2$, a contradiction.  Likewise, if $v$ is not one of $x,y,z$, then the one of the 3-cycles $uxy$, $uxz$, and $uyz$ containing $v$ on its convex side has its two vertices from $x,y,z$ in different ones of $\Sigma_{e}^1$ and $\Sigma_{e}^2$, a contradiction.  These contradictions show that both ends of $e$ are among $x,y,z$; that is, both ends of $e$ are in \marchTen{$C_{e_{0}}^\ell$}.

Next, suppose by way of contradiction, that $e$ is not an edge of \marchTen{$C_{e_{0}}^\ell$}.  Then it is a chord of \marchTen{$C_{e_{0}}^\ell$ in $D_{e_{0}}^\ell$}, and so it crosses an edge $xy$ with $x$ and $y$ in \marchTen{$C_{e_{0}}^\ell$} on different sides of $e$.  But then we have $x$ and $y$ are in different ones of $\Sigma_{e}^1$ and $\Sigma_{e}^2$, a contradiction.

Lemma \ref{lm:SigmaIsF} shows that \marchTen{$D_{e_{0}}^\ell$} (using the convex sides in $\mathcal C$) is f-convex.  Since $e$ is in \marchTen{$C_{e_{0}}^\ell$}, it follows that \augusttwenty{the} vertices of \marchTen{$D_{e_{0}}^\ell$} not incident with $e$ are\augusttwenty{, for some $k\in\{1,2\}$, all in $\Sigma_{e}^k$}.  Since \marchTen{$\delta_j\subseteq \Delta_{e_{0}}^\ell$}, all vertices of \marchTen{$\Delta_{e_{0}}^{3-\ell}$} are \augusttwenty{in the same $\Sigma_{e}^k$ as the two vertices incident with} $e_{0}$.  It follows that all vertices of $K_n$ \augusttwenty{not incident with $e$} are in the same \augusttwenty{$\Sigma_{e}^k$}, showing that $D$ is f-convex, as claimed.

\end{proofof}

   \aprilEight{Let} $n(k)$ denote the number of crossing points of \janThree{$\Gamma^k_{e_0}$} contained in (the interior of) $\Theta^k_{e_0}$.  We shall show there is a $\delta^{k+1}_{e_0}$ such that $J_{k+1} = J_k$ and $n(k+1)<n(k)$.  This is enough to complete the induction.

\maytwonine{Let $A$ be an arc in $\Gamma_{e_0}^k$ with ends in the closed arc $D[C^2_{e_0}]\setminus D[e_0]$ that separates the interior of $\delta_{e_0}^k\setminus D[e_0]$ from $\displaystyle\big(\hskip-7pt\bigcup_{e\in J\setminus J_k}\hskip-7pt\gamma_e\big)\cap \Theta^k_{e_0}$.}  Let \janThree{$\Delta_{A}$} be the  \marchTen{closure of the component of $\Theta^k_{e_0}\setminus A$ that is} incident with both $A$ and $\delta^k_{e_0}\setminus e_{0}$.  Among the finitely many choices for $A$, we choose $A$ so that \janThree{$\Delta_{A}$ is minimal under inclusion}.  \maythreeone{See Figure \ref{fg:DeltaA}.} 

\begin{figure}[ht]
\begin{center}
\includegraphics[scale=0.7]{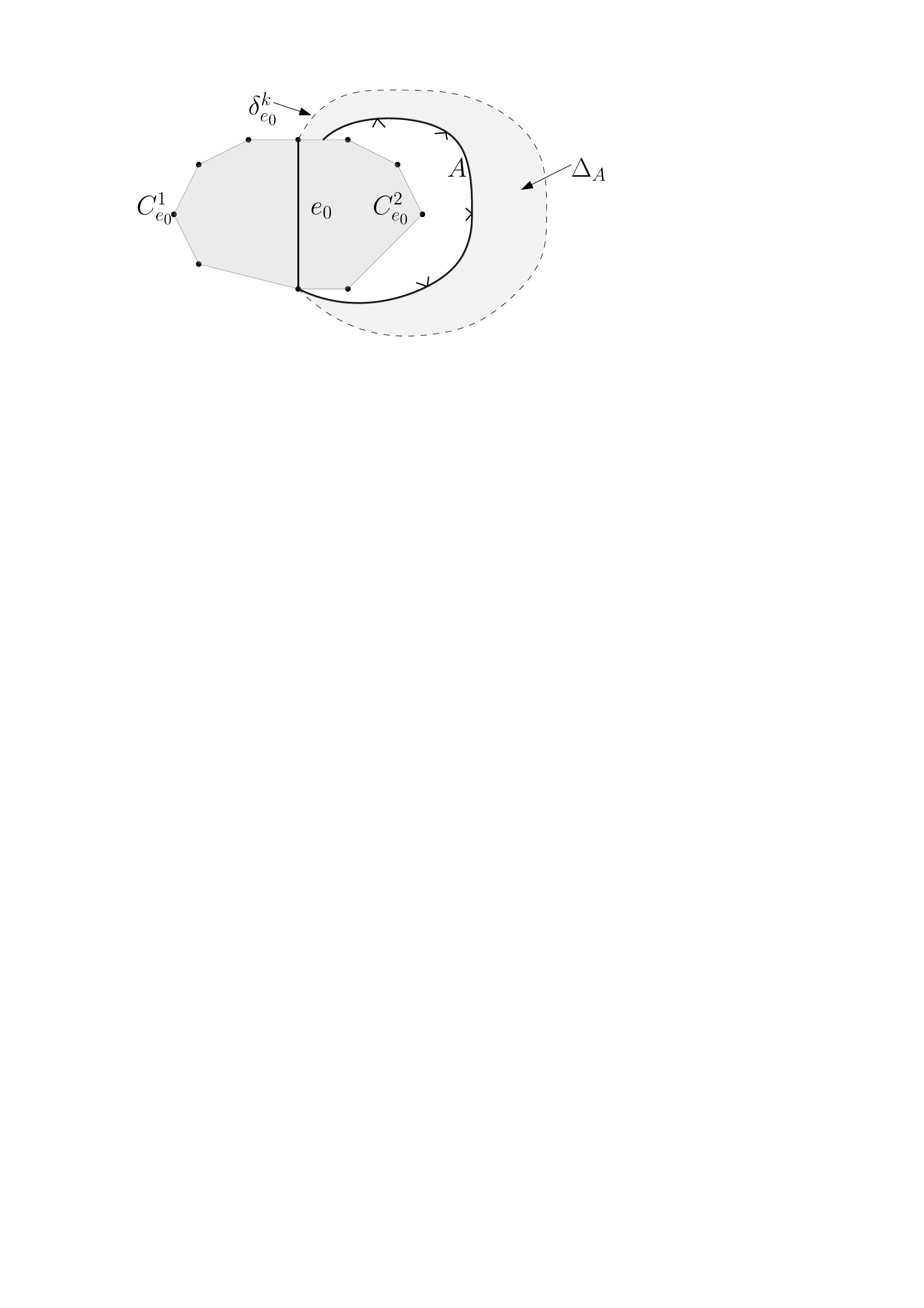}
\end{center}
\caption{The region $\Delta_A$.}  \label{fg:DeltaA}
\end{figure}

\maytwonine{We apply Theorem \ref{th:technical}, particularly \ref{it:arrpseudo}${}\Rightarrow{}$\ref{it:noCoherSpiral}, to see that there is a crossing in $A$ facing $\Delta_A$.  The proof is by contradiction, assuming that every crossing faces the $\Delta_{e_0}^2$-side of $A$.  In particular, $A$ is a spiral; the contradiction follows from the following.}

\maytwonine{\begin{claim}  $A$ is coherent. \end{claim}}

\begin{proof} \maytwonine{Let $\alpha$ be an arc in the decomposition of $A$.   \juneonenine{Let $B$ be \augfifteen{the arc in $D[C_{e_0}^2-e_0]$ such that $A\cup B$ is a simple closed curve in $\mathbb S^2$.}
}}

\maytwonine{\juneonenine{For some $e\in J\setminus\{e_0\}$,  $\alpha$ is contained in $\gamma_e$.  Consider the continuation of $\gamma_e$ from one end of $\alpha$.  Since $\gamma_e$ crosses $\gamma_{e_0}$ twice, the continuation must eventually reach $\gamma_{e_0}$; in particular, it must have a first intersection \augfifteen{with $A\cup B$.} }}

\maytwonine{\juneonenine{We show below that it is impossible for both continuations to have these first intersections \augfifteen{in $B$}.  Therefore, for one of them, the first intersection is in the interior of $A$, showing that $\alpha$ is coherent.}}

\maytwonine{\juneonenine{So suppose both continuations have their first \augfifteen{intersection in $B$}.  Reversing from these first intersections, back towards $\alpha$, each continuation provides a segment from the intersection into $F_{e_0}$.  }}

\maytwonine{\juneonenine{Since $|\gamma_e\cap \gamma_{e_0}|=2$, both continuations must eventually come to a point in $\gamma_{e_0}$; these two points are distinct.  Since $\gamma_e$ intersects the closed edge $D[e]$ in at most one point, there is a continuation $\beta$ from one end of $\alpha$ to $\gamma_{e_0}$ that is disjoint from $D[e]$.  
}}

\maytwonine{\juneonenine{As $\beta$ proceeds from its first intersection \augfifteen{with $A\cup B$}, it must end up in the face $F_{e_0}$ and, therefore, it produces a third segment from a point in $D[C_{e_0}^2]$ into $F_{e_0}$.  This contradicts Lemma \ref{lm:onlyTwoIntersections}.} }

\maytwonine{ Finally,  the preceding remarks show that  coherent extensions from both ends of $\alpha$ will have different first \augfifteen{points in $A\cup B$}.  Therefore, they are different, so $A$ is coherent.} \end{proof}

\ignore{
\junesix{The following lemma will be proved in the next section, based on a technical result stated in that section and then proved in the subsequent section.}

\junesix{\begin{lemma}\label{lm:technicalHconvex}  
Let $D$ be an h-convex drawing of $K_n$ in the sphere.  Let $J$ be a set of edges of $K_n$ \julythirteen{and, for each edge $e\in J$, let $\gamma_e$ be a simple closed curve containing $D[e]$. Suppose that:}
\begin{itemize}
\item for all distinct $e,e'\in J$, $|\gamma_e\cap \gamma_{e'}|=2$;
\item \augfifteen{for all $e\in J$, $e'\in E(K_n)\setminus\{e\}$,} $|\gamma_e\cap D[e']|\le 1$ (here $D[e']$ is the closed edge); and
\item for all $e\in J$, $\gamma_e$ has $\Sigma_e^1$ on one side and $\Sigma_e^2$ on the other.
\end{itemize}
\julythirteen{Let $e_0\in J$ and} $A$ be an arc in $\bigcup_{e\in J\setminus\{e_0\}}\gamma_e$ such that:
\begin{itemize}
\item the ends of $A$ are in $C_{e_0}^2$;
\item the interior of $A$ is contained in the face $F_{e_0}$; and
\item \decTwoOhTwoOh{the open arc $\gamma_{e_0}\setminus D[e_0]$} is contained in the face $F$ of $A\cup C_{e_0}^1\cup C_{e_0}^ 2$ incident with $A$ and $C_{e_0}^1-e_0$.
\end{itemize}
\julythirteen{Then, for some $e,e'\in J\setminus\{e_0\}$, there is a crossing $\times$ in the interior of $A$ of $\gamma_e$ with $\gamma_{e'}$ such that} each of $\gamma_e,\gamma_{e'}$ has a subarc in $A$ ending at $\times$, while $\gamma_e,\gamma_{e'}$ both continue from $\times$  into $F$.
\end{lemma}
 }

\janThree{We apply Lemma \ref{lm:technicalHconvex} with the set $J$ in the lemma taken to be $M_k\cup\{e_0\}$, the curves $\gamma_e$ in the lemma taken to be $\gamma_e$ if $e\in M_k$ and $\delta_{e_0}^k$ if $e=e_0$, and with $e_0$ and $A$ in the lemma taken to be $e_0$ and $A$. It follows that $A$ has an interior crossing whose arcs continue in the face \janThree{$\Delta_{A}$}.   Our next simple observation yields useful information about the arcs  involved in such a crossing.}
}

\maytwonine{The contradiction implies that there is a crossing of $A$ facing $\Delta_A$.  Thus, there are edges $e\in M_k$ such that $\delta_e\cap \Delta_A\ne \varnothing$.}

\maytwonine{\begin{claim}\label{cl:fromAtoDeltak}  Let $e\in M_k$. Then  every arc in \janThree{$\delta_e\cap \Delta_{A}$} has one end in the interior of $\delta^k_{e_0}\setminus e_{0}$ and one end not in the interior of $\delta^k_{e_0}\setminus e_{0}$.
\end{claim} }

\begin{proof} \maytwonine{Let $\beta$ be an arc \janThree{in $\delta_e\cap \Delta_{A}$}.  If $\beta$ has no end in the interior of $\delta^k_{e_0}\setminus e_{0}$, then $\beta\cup A$ contains an \febFiveChange{arc $A'$} that separates the interior of \marchTwentysix{$\delta^k_{e_0}\setminus e_{0}$} from \janThree{$\big(\bigcup_{e\in J\setminus J_k}\gamma_e\big)\cap \Theta^k_{e_0}$} such that \janThree{$\Delta_{A'}$} is properly contained in \janThree{$\Delta_{A}$}, contradicting the choice of $A$.}

 \maytwonine{If $\beta$ has no end in the complement of the interior of \marchTwentysix{$\delta^k_{e_0}\setminus e_{0}$} in the boundary of \janThree{$\Delta_{A}$}, then $\beta$ is contained in \janThree{$\Delta_{A}$} and has both ends in the interior of $\delta^k_{e_0}\setminus e_{0}$.  \augusttwenty{Since \marchTwentysix{$\delta_e\cap \delta^k_{e_0}$}} has exactly two points, these are the two ends of $\beta$.  The preceding paragraph shows that $\beta$ is the only arc \augusttwenty{in \janThree{$\delta_e\cap \Delta_{A}$}}.  Moreover, \augusttwenty{$\delta_e$} consists of $\beta$ and an arc contained in the side of \marchTwentysix{$\delta^k_{e_0}$ \augfifteen{that contains $\Delta_{e_{0}}^1$.  
In particular, Lemma \ref{lm:crossingFi} shows $e\notin M$, so $e\in J_k$.}}}

\maytwonine{\augusttwenty{Because $J_k\ne J$}, there is an  \marchTwentysix{$e'\in J\setminus J_k$}.  
\augfifteen{By definition, $\gamma_{e'}$} is included in one side of $\delta_{e_0}^k$ and Claim 1 shows (since we have assumed $D$ is not $f$-convex) \augfifteen{that $\gamma_{e'}$ is} included in the side of $\delta_{e_0}^k$ including $\Theta_{e_0}^k$. Moreover, $A$ \augfifteen{separates $\gamma_{e'}$ from $\delta_{e_0}^k$,  so \janThree{$\gamma_{e'}\cap \Delta_{A}=\varnothing$}}.  Since \janThree{$\beta\subseteq \Delta_{A}$} \augfifteen{and $(\gamma_e\setminus \beta)\cap \Theta_{e_0}^k=\varnothing$}, \augfifteen{$\gamma_{e'}\cap \gamma_e=\varnothing$}.  However, $e,e'\in J$ \augfifteen{implies $|\gamma_{e'}\cap \gamma_e|=2$}, a contradiction.}
\end{proof}

\maytwonine{\junesix{Let $e,e'$ be distinct elements} of \marchTwentysix{$M_k$} such that \augusttwenty{$\delta_e$ and $\delta_{e'}$} have a crossing $\times_{e,e'}$ in $A$ \junesix{through which they proceed into} the \janThree{$\Delta_{A}$}-side of $A$.    Claim \ref{cl:fromAtoDeltak} shows \marchTen{both extensions from this crossing are arcs $\rho_e$ and $\rho_{e'}$ in \augusttwenty{$\delta_e$ and $\delta_{e'}$}, respectively, joining $\times_{e,e'}$ to their ends $a_e$ and $a_{e'}$, respectively, in the interior of $\delta_{e_0}^k\setminus e_{0}$.  All intersections of \augusttwenty{$\delta_e$ and $\delta_{e'}$} with $\delta^k_{e_0}$ are crossings, so this is true in particular of $a_e$ and $a_{e'}$.}}

\maytwonine{Since $\rho_e$ and $\rho_{e'}$ cross at $\times_{e,e'}$, they have at most one other crossing.  Let $\times^*_{e,e'}$ be that other crossing if it exists; otherwise $\times^*_{e,e'}$ is $\times_{e,e'}$.  The union of the subarcs of $\rho_e$ from $a_e$ to $\times^*_{e,e'}$, $\rho_{e'}$ from $a_{e'}$ to $\times^*_{e,e'}$, and $\delta^k_{e_0}\setminus e_{0}$ from $a_e$ to $a_{e'}$ is a simple closed curve $\lambda$ in the closed disc \janThree{$\Delta_{A}$}. } 

\maytwonine{We aim to show that the interiors of the arcs $\rho_e\cap \lambda$ and $\rho_{e'}\cap \lambda$ are not crossed by any curve in $\Gamma^k_{e_0}$.  Suppose by way of contradiction that there is a \augusttwenty{$\delta_{e''}$} that has a crossing $\times_{e'',e}$ with $\rho_e\cap \lambda$.  \marchTen{Let  $\sigma$ be the component of}  \janThree{$\delta_{e''}\cap \Delta_{A}$} that contains $\times_{e'',e}$.  Claim \ref{cl:fromAtoDeltak} shows that there is a subarc $\sigma'$ of $\sigma$ having one end in $\rho_e$ and one end not in the interior of $\delta^k_{e_0}\setminus e_{0}$.  We may further assume $\sigma'$ has no other intersection with $\rho_e$. }

\maytwonine{There is an arc $A'\ne A$ in $A\cup \sigma'\cup \rho_e$ that separates the interior of $\delta^k_{e_0}\setminus e_{0}$ from \janThree{$\big(\bigcup_{f\in J\setminus J_k}\gamma_f\big)\cap \Theta_{e_0}^{k}$}.  However, \janThree{$\Delta_{A'}$} is a proper subset of \janThree{$\Delta_{A}$}.  This contradiction shows that no curve in $\Gamma^k_{e_0}$ intersects either $\rho_e\cap \lambda$ or $\rho_{e'}\cap \lambda$.} 

\maytwonine{It follows that we can perform the equivalent of a Reidemeister III move to shift the portion of $\delta^k_{e_0}$ between $a_e$ and $a_{e'}$ across $\times^*_{e,e'}$. The resulting curve $\delta^{k+1}_{e_0}$ \janThree{satisfies \ref{it:psVertex}, \janThree{(PS4)}, and \ref{it:psEdge}, and has $J_{k+1}=J_k$ and} $n(k+1) = n(k)-1$, completing the proof of Theorem \ref{th:main}.}
}
\end{cproofof}

\deceight{\octthirteen{A given h-convex \julythirteen{drawing $D$} may have different choices for the convex sides of the 3-cycles that witness h-convexity.  \augustsix{In \junesix{Section \ref{sec:HcxIsPS}, the extensions} of $D$ into arrangements of pseudocircles rely on a choice of convex sides witnessing h-convexity. Moreover, the proof of the implication \ref{it:Dwps}${}\Rightarrow{}$\ref{it:Dhc} shows how the choice of convex sides can be recovered from such an arrangement of pseudocircles. \octthirteen{Define two such arrangements of pseudocircles to be {\em equivalent\/} if they determine the same convex sides.
}}}
}

\deceight{\octthirteen{The sweeping theorem of Snoeyink and Hershberger \cite{sweeping} shows that either of two equivalent arrangements of pseudocircles can be shifted to the other by a sequence of Reidemeister II and III moves.  Simple examples show that Type II moves may be required.}
}

\section{Non-extendible drawings of $K_9$ and $K_{10}$}\label{sec:K9andK10}

\octthirteen{In this section, we present a drawing of each of $K_9$ and $K_{10}$.  The \decTwoOhTwoOh{drawing $D_9$} of $K_9$ in Figure \ref{fg:Kten} has an extension to an arrangement of pseudocircles \deceight{(that is, \ref{it:psWeakCrossings})} that satisfies \ref{it:psVertex}, but no such extension also satisfies \ref{it:psCrossings}.  The \decTwoOhTwoOh{drawing $D_{10}$} of $K_{10}$ in Figure \ref{fg:Kten} does not have an extension \deceight{to an arrangement of pseudocircles}.}

\begin{figure}[ht]
\begin{center}
\includegraphics[scale=0.55]{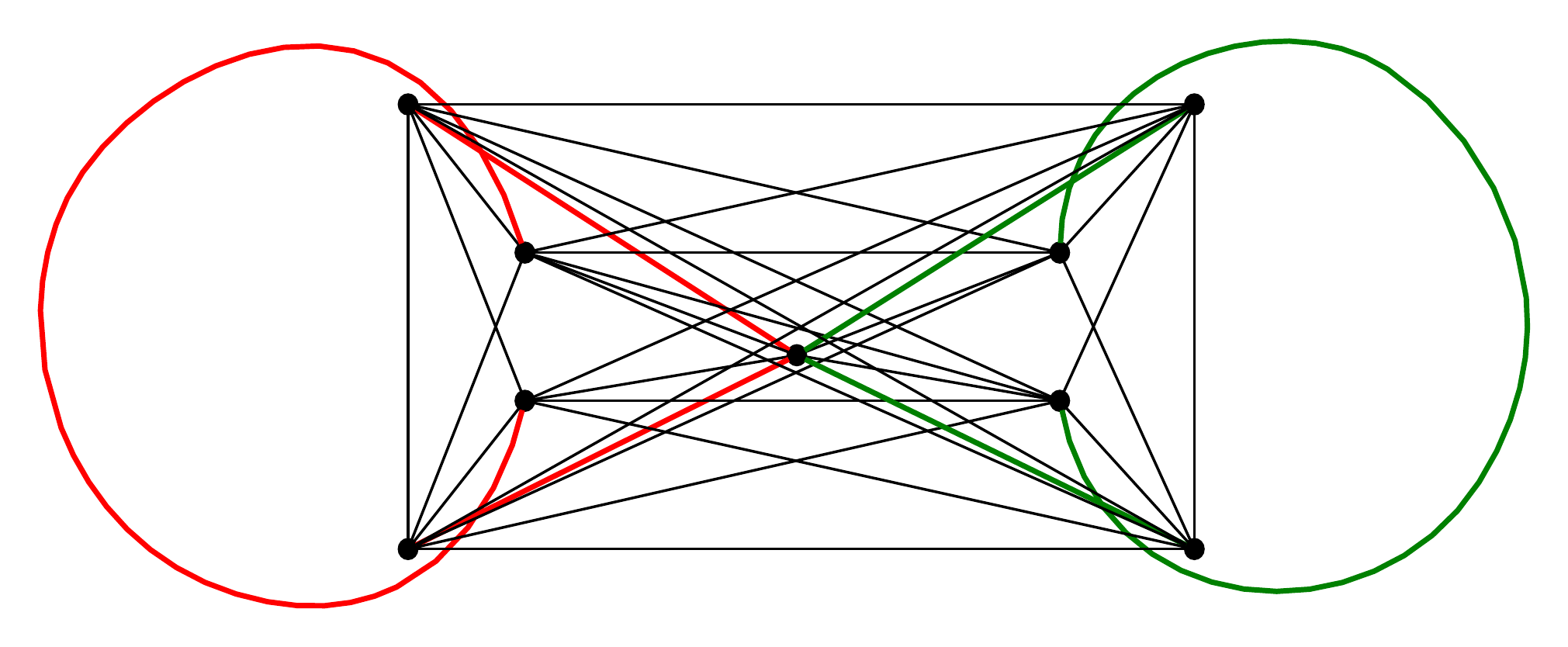}\hfill\includegraphics[scale=0.75]{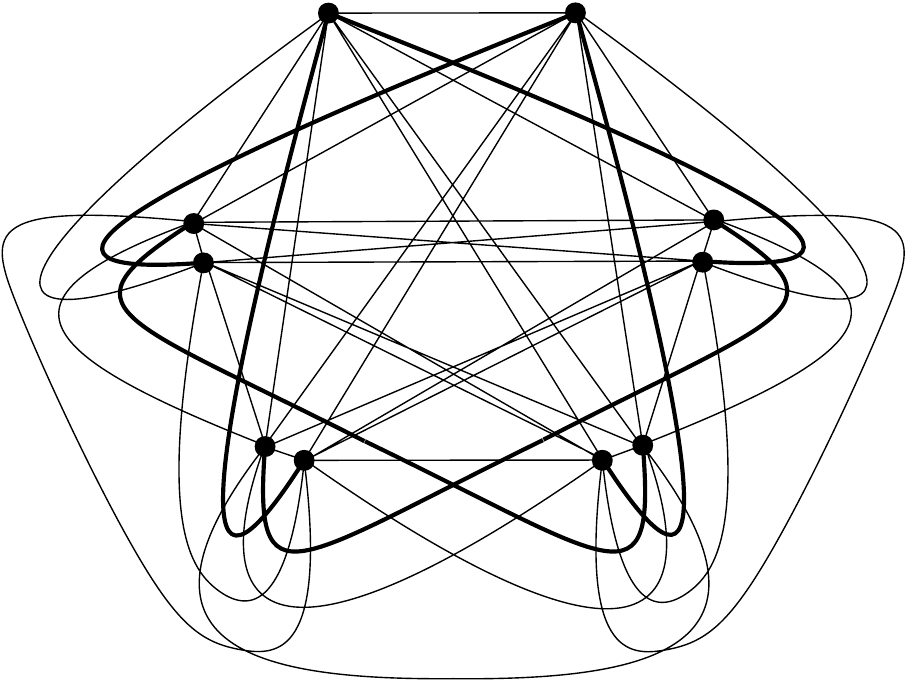}
\caption{The upper \decTwoOhTwoOh{drawing $D_9$} of $K_9$ has a pseudocircular extension, but none satisfying \ref{it:psCrossings}.  \decTwoOhTwoOh{The coloured subdrawings in $D_9$ are both isomorphic to $D_1$ (see Figure \ref{cex1} below)}.  The lower \decTwoOhTwoOh{drawing $D_{10}$} of $K_{10}$ has no pseudocircular extension.   \decTwoOhTwoOh{In the drawing of $K_{10}$, the thicker edges are the drawing $D_2$ (Figure \ref{cex1}).}}
\label{fg:Kten}
\end{center}
\end{figure}

\deceight{The analyses of  the drawings $D_9$ and $D_{10}$ involve spirals in the drawings $D_1$ and $D_2$ in Figure \ref{cex1}.  For convenience, we extend the notion of spiral to closed spiral.  For an arrangement $\Gamma$ of simple closed curves, a {\em closed spiral\/} \decTwoOhTwoOh{is a} simple closed curve $\gamma$ in $P(\Gamma)$ with basepoint $s$ such that, for every sufficiently small open interval $I$ in $\gamma$ containing $s$, $\gamma\setminus I$ is a spiral.
}

\deceight{To see the point of closed spirals, consider the drawing $D_1$.  \janone{The unique simple closed curve $\gamma$ in $D_1$ is a closed spiral, whose basepoint is the vertex in $\gamma$.  }Let $\Gamma$ be an arrangement of pseudocircles extending $D_1$.  Then, for any sufficiently small open interval $I$ in $\gamma$ containing $s$, $I$ does not contain any crossing of $\Gamma$ other than $s$, and $\gamma\setminus I$ is a spiral in $P(\Gamma)$.  By Theorem \ref{th:technical}, $\gamma\setminus I$ has an external segment, which corresponds to a simple closed curve in $P(\Gamma)$ that is contained in the interior of $\gamma$.
}

\remove{\deceight{[Text removed.]}}  It follows that, any extension \decTwoOhTwoOh{of $D_9$ to} an arrangement of pseudocircles must have a pseudocircle in the bounded side of each of the \decTwoOhTwoOh{coloured} copies of $D_1$, showing it does not satisfy \ref{it:psCrossings}.

\begin{figure}[ht!]
\centering
\decTwoOhTwoOh{\includegraphics[scale=0.5]{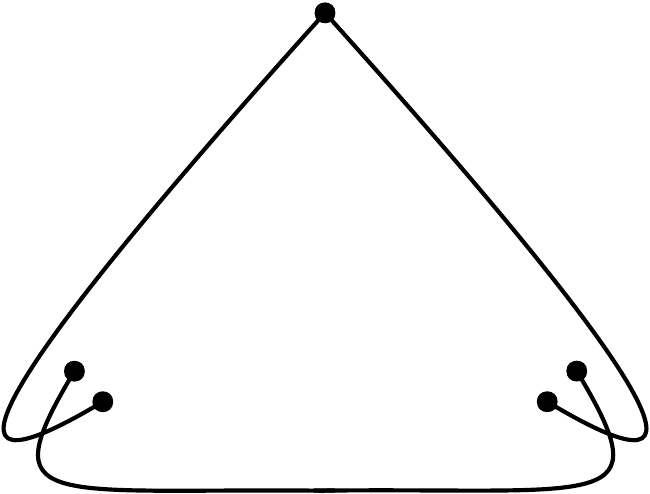}}\hskip 1.5truein
\includegraphics[scale=0.5]{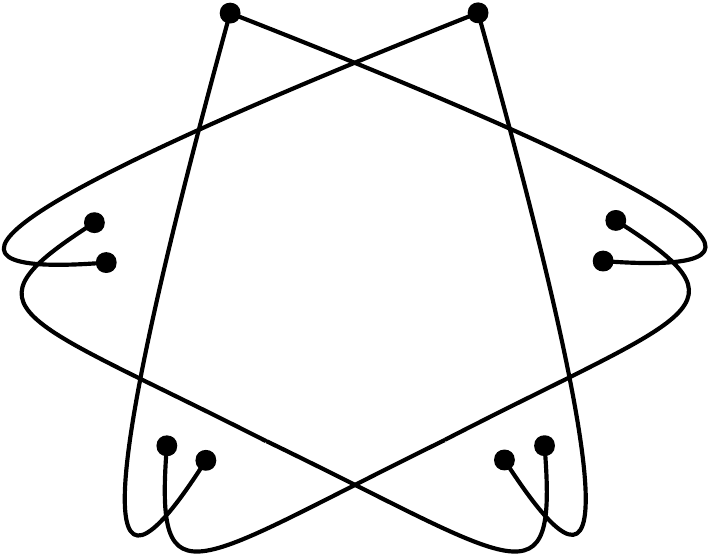}
\caption{The crucial configurations $D_1$ on the left and $D_2$ on the right.}
\label{cex1}
\end{figure}

\octthirteen{On the other hand, we can extend the straight edges into lines and extend the two curved edges by line segments connecting their vertices. In the sphere, adding the point at infinity to the straight lines gives an extension of the drawing of $K_9$ to an arrangement of simple closed curves satisfying \ref{it:psVertex} \decTwoOhTwoOh{and \ref{it:psWeakCrossings}}. Some of the pseudocircles may intersect tangentially at the point at infinity since we did not assume that the lines are in general position, but this can be corrected if we perturb the curves.   Thus, \janone{without \ref{it:psEdge},  \ref{it:psVertex} and \ref{it:psWeakCrossings}} do not imply \ref{it:psVertex} and \ref{it:psCrossings}.
}

\octthirteen{For the \decTwoOhTwoOh{drawing $D_{10}$}, we show \decTwoOhTwoOh{that there} is no arrangement of pseudocircles extending the drawing $D_2$ on the right in Figure \ref{cex1}.  Since $D_2$ is contained \decTwoOhTwoOh{in  $D_{10}$}, this implies there is no arrangement of pseudocircles \decTwoOhTwoOh{extending  $D_{10}$}.
}

\octthirteen{Let $\gamma$ denote the unique simple closed curve in $D_2$ containing the \deceight{ten crossings and none of the vertices.  Let $s$ be the upper most crossing in the diagram.  Then $\gamma$ is a closed spiral.}}

\deceight{For any sufficently small open interval $I$ in $\gamma$ containing $s$,  $\gamma\setminus I$ is a spiral in $P(\Gamma)$ that has weight 7 with decomposition $\alpha_0\alpha_1\dots\alpha_{7}$}.  The drawing $D_2$ already shows that the segments $\alpha_1,\dots,\alpha_7$ are coherent.  The segments $\alpha_0$ and $\alpha_7$ are symmetric.  

\octthirteen{The extension $\alpha_0^+$ is contained in  pseudocircle $\gamma_0$ containing $\alpha_0$.  We note that $\gamma_0$ contains a vertex outside of our original simple closed curve $\gamma$.  Therefore, the choice of $I$ shows $\alpha_0^+$ has its end on $\gamma\setminus I$.  Consequently, $\alpha_0$ is also coherent.  Likewise, $\alpha_7$ is coherent, showing $P(\Gamma)$ contains a coherent spiral, contradicting Theorem \ref{th:technical}.  That is, $D_2$ and, consequently, the drawing of $K_{10}$ do not have extensions to arrangements of pseudocircles.
}

\ignore{
\startClaims
\begin{claim}\label{cl:0orBoth1and2} Either $\gamma_0$ is contained in the closure of the interior face or both $\gamma_1$ and $\gamma_2$ are contained in the closure of the interior face.
\end{claim}

\begin{proof}
If $\gamma_0$ is not contained in the interior face, then \marchTwentysix{$\gamma_0\setminus e_0$ crosses each of $e_1$ and $e_2$ exactly once.} 
The cyclic order of the crossings of $\gamma_0$ with $e_1$ and $e_2$ is the two with $e_1$ followed by the two with $e_2$.  Notice that $\gamma_0$ crosses both $e_1$ and $e_2$ twice and, therefore, for $i=1,2$, $\gamma_i\setminus e_i$ has no intersection with $\gamma_0$.  

It follows that if $\gamma_1$ is not contained in the interior face, then it must cross $e_2$ at some point other than the vertex $u$ of degree 2 and this crossing is in $\gamma_1\setminus e_1$.  With $u$ as the already known second intersection of $\gamma_1$ and $e_2$, $\gamma_2\setminus e_2$ has no intersection with $\gamma_1$.  However, there is a simple closed curve contained in $\gamma_0\cup \gamma_1\cup e_2$ that has the two ends of $e_2$ on opposite sides.  The arc $\gamma_2\setminus e_2$ joins these two points, but without crossing any of $\gamma_0$, $\gamma_1$, and $e_2$, a contradiction.

Therefore $\gamma_1$,  and likewise $\gamma_2$, is contained in the interior face.
\end{proof}

Let $D_2$ be the drawing in Figure \ref{cex2}, obtained by overlapping two copies of $D_1$.  By way of contradiction, suppose there are six simple closed curves, one for each of the six edges in $D_2$, satisfying \ref{it:psVertex} and \ref{it:psCrossings}.   For $j=1,2$ let $D_1^j$ be the $j^{\textrm{th}}$ copy of $D_1$ in $D_2$ and, for $i=0,1,2$, let $\gamma^j_i$ be the copy of $\gamma_i$ in $D_1^j$.  If, for both $j=1,2$, $\gamma^j_0$ is contained in the interior face of $D_1^j$, then these two curves cross four times, contradicting \ref{it:psCrossings}.  

On the other hand, if $\gamma^1_0$ is not contained in the interior face of $D^1_1$, then Claim \ref{cl:0orBoth1and2} shows that both $\gamma^1_1$ and $\gamma^1_2$ are contained in the interior face of $D^1_1$.   \marchTwentysix{Claim \ref{cl:0orBoth1and2} implies} that at least one of $\gamma^2_0$, $\gamma^2_1$, and $\gamma^2_2$ is contained in the interior face \marchTwentysix{of $D_1^2$;} \augfifteen{any one that is contained in the interior face} has at least four crossings with at least one of $\gamma^1_1$ and $\gamma^1_2$, again contradicting \ref{it:psCrossings}.

The drawing of $K_{10}$ in Figure \ref{fg:Kten} contains $D_2$ as a subdrawing.  Therefore,  its edges cannot be extended into an arrangement of pseudocircles, completing the proof for \febEightChange{the drawing of} $K_{10}$.

\begin{figure}[ht]
\centering
\includegraphics[scale=0.5]{Counterexample2}
\caption{The drawing $D_2$.}
\label{cex2}
\end{figure}

\julythirteen{To deal with the drawing of $K_9$ in Figure \ref{fg:Kten}, extend the straight edges into lines and extend the two curved edges by line segments connecting their vertices. In the sphere, adding the point at infinity to the straight lines gives an extension of the drawing of $K_9$ to an arrangement of simple closed curves satisfying (PS1) and (PS2). Some of the pseudocircles may intersect tangentially at the point at infinity since we did not assume that the lines are in general position, but this can be corrected if we perturb the curves.}

\ignore{

\febEightChange{To deal with the drawing of $K_9$, all but two of the edges are represented (in the plane) by straight segments in Figure \ref{fg:Kten}.  Adjusting the vertices slightly, we can assume that no two of the straight segments are parallel.  Thus, their straight extensions pairwise intersect in unique points.  Choose a big circle $\delta$ in the plane that contains the drawing of $K_9$ and all intersections of any two of them. The segment of one of the lines between \julythirteen{its two} intersections with $\delta$, together with an arc outside of $\delta$ creates a simple closed curve through the original edge represented by the straight segment. Any two such arcs intersect exactly once, producing two crossings between any two of the simple closed curves arising from straight segments.}

\febEightChange{As for the remaining two edges, both curved, it is a trivial matter to extend them to simple closed curves (looking like ellipses) to complete the extension of the drawing of $K_9$ to simple closed curves satisfying \ref{it:psVertex} and \ref{it:psCrossings}.
}
}

\julythirteen{Two copies of the configuration  $D_1$} of Figure \ref{cex1} are highlighted in the drawing of $K_9$ illustrated in Figure \ref{fg:Kten}.  Claim 1 shows that, for any set of simple closed curves satisfying \ref{it:psVertex} and \ref{it:psCrossings}, each copy \julythirteen{of $D_1$} has one of the simple closed curves in its interior.  These two curves are disjoint, showing that one pair of simple closed curves is disjoint, as required.
}

\section{Conclusion}\label{sec:conclusion}

In this section, we mention a few results and questions about drawings of $K_n$ related to this work.    We start with the questions.

\marchTen{The \decTwoOhTwoOh{drawing $D_{10}$} in Figure \ref{fg:Kten} \decTwoOhTwoOh{has no} extension to \janone{an arrangement of pseudocircles}.  It is natural to wonder if there is a fixed $k$ such that every drawing of $K_n$ has an extension \octthirteen{to an arrangement of simple closed curves} that pairwise cross at most $k$ times.  In fact, for a drawing of any simple graph, there is such an extension with $k\le 4$.  See Figure \ref{fg:fourCrossings} \octthirteen{for an idea of how such an extension may be achieved}.}

\begin{figure}[ht!]
\begin{center}
\includegraphics[scale=.1]{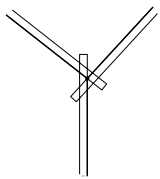}
\caption{\marchTen{Extensions near a vertex.}}
\label{fg:fourCrossings}
\end{center}
\end{figure}

Some interesting questions remain unresolved.  

\begin{enumerate}[label={\bf Question \arabic*.},leftmargin = 1.10 truein, ref=Question \arabic*]


\item Does every convex drawing of $K_n$ have an extension to simple closed curves pairwise crossing exactly twice, or if not, at most twice?  \marchTen{The drawings in Figure \ref{fg:Kten} are not convex.}

\item Arroyo et al \cite{abr} characterize drawings of (not necessarily complete) graphs whose edges extend to an arrangement of pseudolines by giving the complete (infinite) list of obstructions.  Given the close connection we developed here between \deceight{pseudospherical} and pseudolinear drawings for complete graphs, it is reasonable to \julythirteen{wonder if there is an analogous theorem for arrangements of pseudocircles.}  

We conjecture that there is a list-of-obstructions characterization of when an arbitrary graph has an extension satisfying \ref{it:psVertex}, \ref{it:psCrossings} and \ref{it:psEdge}.  It is not clear to us at this juncture how to proceed with this.  \remove{\deceight{[Text removed.]}}

\end{enumerate}

The study of spirals and Theorem \ref{th:technical} played an essential role in the proof of Theorem \ref{th:main} and also led to the drawings $D_1$ and $D_2$ used in the drawings of $K_9$ and $K_{10}$. 

\deceight{Recall that a string is a homeomorph of a compact real interval.}

\begin{enumerate}[label={\bf Question \arabic*.},leftmargin = 1.10 truein, ref=Question \arabic*, start=3]
\item{Can we characterize \deceight{those} sets of strings  that are extendible to an arrangement of pseudocircles?}
\end{enumerate}

\maytwonine{So far, the authors have not found 
an example of a set of strings not-extendible to a set of pseudocircles
that cannot be explained in terms of spirals and Theorem \ref{th:technical}. A further study of spirals may be the key to solve this question.}

We conclude with two simple results about pseudospherical drawings of $K_n$.  For the first, Lemma \ref{lm:disjtDelta} implies that every edge of a pseudospherical drawing of $K_n$ induces a split of the drawing into two pseudolinear drawings of smaller complete graphs (one having, say, $k$ vertices and the other $n+2-k$ vertices). Every pseudolinear drawing of $K_n$ has at least $n^2+{}$O$(n\log(n))$ empty triangles (that is, 3-cycles having a side that does not contain a vertex) \cite{amrs}.  Thus, we can estimate the number of empty triangles on one side or other of the split.  Adding in empty triangles involving vertices on different sides of the split yields at least $\frac34n^2+ {}$O$(n\log(n))$ empty triangles in a pseudospherical drawing of $K_n$.

Rafla \cite{rafla} conjectured that every (good) drawing of $K_n$ has a Hamilton cycle \febFiveChange{with no self-crossing}.   \'Abrego et al.\ have enumerated all the drawings of $K_n$ with $n\le 9$ \cite{oswin2} and in this way verified the conjecture for all these drawings of $K_n$.  \decTwoOhTwoOh{We extend to pseudospherical drawings the folklore proof that a pseudolinear drawing of $K_n$ has such a Hamilton cycle.}

\decTwoOhTwoOh{For each edge $e$ of the pseudospherical drawing $D$ of $K_n$, let $c(e)$ denote the number of pseudocircles that $D[e]$ crosses.  
Choose a Hamilton cycle $H$ in $K_n$ that minimizes $\sum_{e\in E(H)}c(e)$.  
If $e_1,f_1\in E(H)$ cross in $D$, then let $J$ be the $K_4$ induced by $e_1$ and $f_1$.  
Then $D[J]$ has exactly one crossing, namely $e_1$ with $f_1$.  
The remaining four edges in $J$ come in two disjoint, non-crossing pairs.  
For one such pair $\{e_2,f_2\}$, 
$(H-\{e_1,f_1\})+ \{e_2,f_2\}$
 is also a Hamilton cycle $H'$.}

\decTwoOhTwoOh{We claim that $\sum_{e\in E(H')}c(e)<\sum_{e\in E(H)}c(e)$, contradicting the choice of $H$.  \begin{enumerate}\item Clearly, the two contributions of $e_1$ crossing $\gamma_{f_1}$ and $f_1$ crossing $\gamma_{e_1}$ are counted for the $H$-sum, but not for the $H'$-sum.  
\newline\indent Now suppose $e\in E(H')$ crosses the pseudocircle $\gamma_f$ containing $f\in E(H')$.  \item If  $e$ is not $\{e_2,f_2\}$, then the $e\gamma_f$-crossing is counted for both $H$ and $H'$.  \item If $e=e_2$, say, then $\gamma_f$ crosses $e_2$ and must continue through another side of the \rbr{4-cycle $D[J-\{e_1,f_1\}]$}.  In order to get there, it must cross at least one of $e_1$ and $f_1$.  If $\gamma_f$ crosses both $e_2,f_2$, then it also crosses both $e_1,f_1$.  Therefore, $\gamma_f$ has at least as many crossing with $e_1,f_1$ that it has with $e_2,f_2$, as required.\end{enumerate}}

\newpage

\end{document}